\theoremstyle{plain}
\newtheorem{theorem}{Theorem}[section]
\newtheorem{proposition}[theorem]{Proposition}
\newtheorem*{theorem*}{Theorem}
\newtheorem*{lemma*}{Lemma}
\newtheorem{lemma}[theorem]{Lemma}
\theoremstyle{remark}
\newtheorem*{case*}{Case}
\theoremstyle{definition}
\newcommand{\tw}[1]{{}^#1\!}
\newcommand{\stab}{\mathrm{stab}}
\newcommand{\Z}{\mathbb{Z}}
\newcommand{\aut}{\mathrm{Aut}}
\newcommand{\out}{\mathrm{Out}}
\newcommand{\C}{\mathbb{C}}
\newcommand{\F}{\mathbb{F}}
\newcommand{\E}{\mathcal{E}}
\newcommand{\syl}{\mathrm{Syl}}
\newcommand{\Syl}{\mathrm{Syl}}
\newcommand{\ind}{\mathrm{Ind}}
\newcommand{\irr}{\mathrm{Irr}}
\newcommand{\ibr}{\mathrm{IBr}}
\newcommand{\wh}[1]{\widehat{#1}}
\newcommand{\diag}{\mathrm{diag}}
\newcommand{\dz}{\mathrm{dz}}
\title{$Sp_6(2^a)$ is ``Good" for the McKay, Alperin Weight, and Related Local-Global Conjectures}
\author{Amanda A. Schaeffer Fry\\\small\textit{Department of Mathematics, University of Arizona, Tucson, AZ 85721, USA}}
\date{}
\begin{document}
\maketitle

\begin{abstract}
The so-called ``local-global" conjectures in the representation theory of finite groups relate the representation theory of $G$ to that of certain proper subgroups, such as the normalizers of particular $p$-groups.  Recent results by several authors reduce some of these conjectures to showing that a certain collection of stronger conditions holds for all finite simple groups. Here, we show that $G=Sp_6(2^a)$ is ``good" for these reductions for the McKay conjecture, the Alperin weight conjecture, and their blockwise versions.

\textit{Keywords:} Cross characteristic representations, Local-global conjectures, Finite classical groups, Alperin weight conjecture, McKay conjecture, Alperin-McKay conjecture
\end{abstract}

\section{Introduction: The Conjectures and Reductions}
Much of the representation theory of finite groups is dedicated to showing the validity of various conjectures which relate certain invariants of a finite group with those of certain subgroups.  Often, these have to do with the number of characters of the group of a given type.  One of the first of these ``local-global" conjectures is the McKay conjecture, which says that if $G$ is a finite group with $\ell$ a prime and $P\in\mathrm{Syl}_\ell(G)$, then $|\irr_{\ell'}(G)|=|\irr_{\ell'}(N_G(P))|$, where $\irr_{\ell'}(X)$ represents the set of irreducible characters of $X$ with degree prime to $\ell$.  The Alperin-McKay (AM) conjecture extends the McKay conjecture to include the role of blocks, and says that if $B$ is an $\ell$-block of $G$ with defect group $D$ and $b$ is the block of $N_G(D)$ with $b^G=B$, then the numbers of height zero characters of $B$ and $b$ coincide.

%Later, Alperin \cite{AlperinMcKayconjecture} extended the McKay conjecture to include the role of blocks.  The new conjecture, known as the Alperin-McKay conjecture, uses Brauer's First Main Theorem, which says that block induction $b\mapsto b^G$ gives a bijection between blocks $B$ of $G$ with defect group $D$ and blocks $b$ of $N_G(D)$ with defect group $D$.  A character $\chi$ in the block $B$ has height zero if its degree satisfies $\chi(1)_{\ell}=[G:D]_{\ell}$.

%\begin{AMConj}
%Let $G$ be a finite group, $B$ an $\ell$-block of $G$ with defect group $D$, and $b$ the block of $N_G(D)$ with $b^G=B$. Then the number of height zero characters of $B$ and $b$ coincide.
%\end{AMConj}

All of these conjectures remain open today, and although there is much evidence for the validity of the McKay conjecture, the question of \emph{why} it should be true remains unclear.  Reduction theorems for the McKay and Alperin-McKay conjectures have been proved in \cite{IsaacsMalleNavarroMcKayreduction} and \cite{spathAMreduction}, respectively, with the hope of providing not only a method by which to prove the conjectures, but also a better understanding of the deeper underlying reason behind them.  These reductions involve a list of conditions that a simple group must satisfy in order to be ``good" for the McKay (resp. Alperin-McKay) conjecture for a prime $\ell$.  The reductions say that if every finite simple group is ``good" for the McKay (resp. Alperin-McKay) conjecture for $\ell$, then \emph{every} finite group satisfies the conjecture for the prime $\ell$.

%In 1986, Alperin \cite{AlperinAWC} made the following conjecture:
%
%\begin{AWC}
%Let $X$ be a finite group and $\ell$ a prime. Then the number of irreducible Brauer characters of $G$ equals the number of $X$-conjugacy classes of $\ell$-weights of $G$.
%\end{AWC}

An $\ell$-weight of a finite group $G$ is a pair $(Q,\mu)$, where $Q$ is an $\ell$-radical subgroup (i.e. an $\ell$-subgroup such that $Q=\textbf{O}_\ell(N_G(Q))$) and $\mu$ is a defect-zero character of $N_G(Q)/Q$.  %, that is, an irreducible character with $\mu(1)_{\ell}=|N_G(Q)/Q|_{\ell}$.
More generally, a weight for a block $B$ of $G$ is a pair $(Q,\mu)$ as before, where $\mu$ lies in a block $b$ of $N_G(Q)$ for which the induced block $b^G$ is $B$.  The Alperin weight conjecture (AWC) says that if $G$ is a finite group and $\ell$ is a prime dividing $|G|$, then  the number of irreducible Brauer characters of $G$ equals the number of $G$-conjugacy classes of $\ell$-weights of $G$.  The blockwise Alperin weight conjecture (BAWC) says that the number of irreducible Brauer characters belonging to a block $B$ of $G$ equals the number of $G$-conjugacy classes of $\ell$-weights of $B$.

%Again, we have an extension of this conjecture to one which involves the blocks of the group:
%
%\begin{BAWC}
%Let $X$ be a finite group, $\ell$ a prime, and $B$ an $\ell$-block of $X$. Then the number of irreducible Brauer characters belonging to $B$ equals the number of $X$-conjugacy classes of $\ell$-weights of $B$.
%\end{BAWC}

In \cite{TiepNavarroAWCreduction}, Navarro and Tiep prove a reduction for the Alperin weight conjecture in the same spirit as that for the McKay conjecture, and in \cite{SpathBAWCreduction}, Sp{\"a}th extends this reduction to the blockwise version of the Alperin weight conjecture.

The reductions for these conjectures give us hope of proving them by appealing to the classification of finite simple groups.  It has been shown that under certain conditions, a simple group of Lie type is ``good" for the various conjectures, but it still needs to be shown in general.  For example, it is known (see \cite{TiepNavarroAWCreduction}, \cite{SpathBAWCreduction}) that a simple group of Lie type defined in characteristic $p$ is ``good" for the Alperin weight and blockwise Alperin weight conjectures for the prime $\ell=p$, but the question is still open when $\ell\neq p$.  Similarly, for $\ell=p\geq5$, Sp{\"a}th has shown in \cite[Proposition 8.4]{spathAMreduction} that a simple group of Lie type defined in characteristic $p$ is ``good" for the Alperin-McKay conjecture (and therefore also the McKay conjecture) for the prime $\ell=p$.  It is worth noting that for $q\geq 4$ a power of $2$, the same argument shows that indeed, $Sp_6(q)$ and $Sp_4(q)$ are good for the Alperin-McKay conjecture for the prime $2$, as the Schur multiplier is non-exceptional in these cases.

In \cite{cabanesSp4good}, Cabanes shows that $Sp_4(2^a)$ is ``good" for the McKay conjecture for all primes $\ell\neq 2$.  According to the discussion preceding \cite[Theorem A]{SpathBAWCreduction}, G. Malle has shown that $Sp_4(2)'\cong A_6$ is ``good" for the blockwise Alperin weight conjecture. We prove the following statement, which therefore implies that $Sp_6(2^a)$ and $Sp_4(2^a)$ are ``good" for each of these conjectures for every prime:

\begin{theorem}\label{thm:Sp6good}
The simple groups $Sp_6(q)$ with $q$ even and $Sp_4(q)$ with $q\geq 4$ even are ``good" for the McKay, Alperin-McKay, Alperin weight, and blockwise Alperin weight conjectures for all primes $\ell\neq 2$.  Moreover, the simple group $Sp_4(2)'$ is ``good" for the Alperin-McKay conjecture for all primes $\ell$ (including $\ell=2$) and $Sp_6(2)$ is ``good" for the Alperin-McKay conjecture for the prime $\ell=2$.
\end{theorem}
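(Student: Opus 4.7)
The plan is to verify the ``inductive'' criteria from \cite{IsaacsMalleNavarroMcKayreduction,spathAMreduction,TiepNavarroAWCreduction,SpathBAWCreduction} case by case, exploiting the particularly favorable features of $G=Sp_{2n}(q)$ with $q=2^a$: the Schur multiplier is trivial (for $n=3$ always, and for $n=2$ when $q\ge 4$), the diagonal and graph automorphisms are absent, so $\out(G)$ is cyclic, generated by the field automorphism of order $a$. This makes most of the cohomological obstructions in the reduction trivial and reduces the task essentially to producing an $\aut(G)$-equivariant bijection that is compatible with the field automorphisms and with the central characters (the central character issue itself being mild since $Z(G)=1$).

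First I would dispose of the small exceptional groups: $Sp_4(2)'\cong A_6$ and $Sp_6(2)$ are known to be good for McKay/AWC from the reductions and their known character/weight tables, and Malle's announced result covers BAWC for $A_6$. The remaining Alperin--McKay claim for $\ell=2$ in $Sp_6(2)$ and for all $\ell$ in $Sp_4(2)'$ can be checked by direct inspection using the available Brauer tables in GAP/Atlas, since both groups are small enough and $\out(G)$ is cyclic of order $2$; I would produce the required $\aut(G)$-equivariant height-preserving block-wise bijection by hand.

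For the generic case $q\ge 4$, and $\ell\ne 2$, I would treat McKay/AM first. The $\ell'$-characters of $G$ are controlled by Lusztig series parametrized by semisimple $\ell'$-classes in the dual group $G^*=SO_{2n+1}(q)$, organized by $d$-cuspidal pairs where $d$ is the order of $q$ mod $\ell$. A Sylow $\ell$-subgroup $P$ of $G$ is contained in $N_G(T_d)$ for a Sylow $\Phi_d$-torus $T_d$, and the $\ell'$-characters of $N_G(P)$ can be described in terms of the relative Weyl group $W(T_d)$. The bijection follows the lines of Malle's ``generic'' Jordan-type correspondences; the key is to show that the natural bijection is equivariant under the field automorphism (straightforward since $F_p$ acts on the Lusztig series by its action on semisimple classes, and on $N_G(P)$ by its action on the $\Phi_d$-cyclotomic polynomial data) and that the required character extensions to $G\rtimes\langle F_p\rangle$ and $N_G(P)\rtimes\langle F_p\rangle$ exist with the same projective lifts. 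Because $\out(G)$ is cyclic and $Z(G)=1$, the cohomological condition (condition (iii) of \cite[\S10]{spathAMreduction}) collapses: cyclic automorphism groups always admit extensions, so I need only to verify that the bijection preserves the field-automorphism stabilizers. For AM I additionally need to check that the bijection respects blocks and heights; with $\ell\nmid 2$, Cabanes--Enguehard block theory \cite{cabanesSp4good} gives the block assignment through the $d$-Harish-Chandra series and there are no ``bad'' primes to worry about on the dual side in this characteristic.

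For AWC/BAWC, I would follow the Navarro--Tiep--Späth strategy: parametrize the $\ell$-radical subgroups $Q$ of $G$ up to $G$-conjugacy via the An--Fong--Srinivasan classification for classical groups, and identify $N_G(Q)/Q$ as a product of normalizers of tori in smaller classical groups times a Weyl-type component. The weights then correspond to the pairs of $d$-cuspidal data together with characters of the associated relative Weyl groups, and the resulting count matches $|\ibr(G)|$ via the known parametrization of Brauer characters in cross characteristic. Equivariance under the field automorphism reduces to the fact that $F_p$ permutes the radical subgroups of a fixed type by its action on their polynomial invariants and acts compatibly on the relative Weyl groups. Again the cohomological conditions of \cite[\S3]{SpathBAWCreduction} are immediate because $\out(G)$ is cyclic and $Z(G)=1$. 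The main obstacle I anticipate is book-keeping: matching the $F_p$-orbits on weights with the $F_p$-orbits on Brauer characters within each block, particularly for primes $\ell$ dividing $q^2+1$ or $q^2-q+1$ where the $d$-Harish-Chandra series are smallest and the arithmetic in the relative Weyl groups is most delicate. I would handle this by listing the possible values of $d=d_\ell(q)\in\{1,2,3,4,6\}$ for $Sp_6$ (and $d\in\{1,2,4\}$ for $Sp_4$) and checking each case explicitly using the generic character table and the known radical subgroup structure.
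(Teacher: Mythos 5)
Your plan captures the correct high-level skeleton of the argument (trivial Schur multiplier, cyclic $\out(G)$, classify radical subgroups, build equivariant bijections, dispose of $Sp_6(2)$ and $A_6$ by GAP/Atlas), and the case organization by $d=d_\ell(q)\in\{1,2,3,4,6\}$ is exactly the paper's case split phrased differently. However, there are two substantive differences from the paper's proof, and one of them is a genuine gap.

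First, the methodological difference: the paper does not use Malle-style generic $d$-Harish--Chandra correspondences as a black box. It constructs the bijections $\Omega_Q$ and $\ast_Q$ entirely by hand from L{\"u}beck's explicit character table of $Sp_6(2^a)$ in CHEVIE, White's decomposition matrices for the unipotent blocks, and the Bonnaf{\'e}--Rouquier Morita equivalence for the remaining Lusztig series. In return for that tedium, it can \emph{verify} equivariance and block compatibility class-by-class (Propositions~\ref{prop:equivariantprop}--\ref{prop:ht0}), rather than having to establish that a generic parametrization is simultaneously automorphism-equivariant and block-preserving for this particular group.

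Second, and this is the gap: you claim that because $\out(G)$ is cyclic and $Z(G)=1$, ``the cohomological condition (iii) collapses: cyclic automorphism groups always admit extensions, so I need only to verify that the bijection preserves the field-automorphism stabilizers.'' This mischaracterizes condition (iii) of \cite[Definition~7.2]{spathAMreduction}. Existence of extensions to $A(\chi)=G\rtimes\langle\eta\rangle$ and to $N_{A(\chi)}(Q)$ is indeed automatic when the quotients are cyclic, but the condition further requires that the extensions $\widetilde{\chi}$ and $\widetilde{\Omega_Q(\chi)}$ be chosen \emph{compatibly} --- their restrictions to the $\ell'$-Hall overgroups must lie in blocks that correspond under block induction, and the resulting central functions must agree in the sense of \cite[(7.4)]{spathAMreduction}. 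The paper devotes Lemma~\ref{lem:outcyclic} to constructing such compatible extensions (via a central-character computation showing $\tilde{b}^{A_{\ell'}}=\tilde{B}$) and Theorem~\ref{thm:goodAM} to deducing (7.4) from it, with a further wrinkle for $Sp_4(q)$ at $Q_1,Q_2$ where the generating outer automorphism $\gamma$ does \emph{not} stabilize $Q$. None of this is a tautology, and skipping it leaves the Alperin--McKay part of the theorem unproved. Relatedly, you do not address the $\ell=3$ complications at all: there the non-abelian radical subgroup $R$ is $3$-radical but is never a defect group, the radical subgroups $Q_{1,1,1}$, $P$, $R$ all contribute to the weight count for the principal block, and the Brauer characters of that block (and of the $B_8$, $B_9$ blocks) must be explicitly partitioned among the three to make the AWC/BAWC bijections work. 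Your $d$-Harish--Chandra parametrization sketch does not obviously produce such a partition, and the paper handles it by hand.

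\end{document}
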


For the Alperin weight conjecture and its blockwise counterpart, the reduction requires finding a partition $\ibr_\ell(G)=\bigcup_{Q\in \mathrm{Rad}(G)} \ibr_\ell(G|Q)$, where $\mathrm{Rad}(G)$ is a $G$-transversal of $\ell$-radical subgroups of $G$, and bijections $\ast_Q$ from $\ibr_\ell(G|Q)$ to the defect-zero characters of $N_G(Q)/Q$. For the McKay and Alperin-McKay conjectures, the reduction requires finding bijections $\Omega_Q\colon \irr_0(G|Q)\rightarrow \irr_0(N_G(Q)|Q)$, where $\irr_0(X|D)$ is the set of height-zero irreducible characters of a group $X$ lying in a block with defect group $D$.  Among other conditions, the maps $\ast_Q$ and $\Omega_Q$ must commute with automorphisms of $G$ and be such that the block of $\psi\in\ibr_\ell(G|Q)$ or $\chi\in\irr_0(G|Q)$ is the induced block of $\psi^{\ast_Q}$ or $\Omega_Q(\chi)$.

Though our proof is rather specialized, we hope that it will lead us to find a more
general underlying pattern which will give us an idea of how to extend these results
to higher rank groups.  We make use here of the explicit knowledge of the character table for $Sp_6(2^a)$, found in \cite{Luebeckthesis}, as well as the decomposition numbers of unipotent blocks for $Sp_6(2^a)$ and lower rank groups of Lie type found in \cite{white2000}, \cite{white95}, \cite{james1990},  \cite{okuyamawakiSp4}, \cite{geckdecompSU}, and \cite{okuyamawakiSU3}.  We also exploit the fact that in most cases (namely, when $\ell\neq 3$), the radical subgroups of $Sp_6(2^a)$ are abelian and their normalizers and centralizers can be described relatively easily, and therefore the relevant characters of $N_G(Q)$ can be found explicitly from the structure of these groups.  For higher-rank symplectic groups, we see a larger number of primes $\ell$ yielding more complicated radical subgroups, which may make such a treatment more difficult.

\section{Preliminaries and Notation}\label{sec:preliminaries}
Throughout this paper, $\ell$ denotes a prime, thought of as the characteristic for a representation.  As usual, $\irr(X)$ will denote the set of irreducible ordinary characters of $X$ and $\ibr_\ell(X)$ will denote the set of irreducible $\ell$-Brauer characters of $X$.  Given a subgroup $Y\leq X$ and $\phi\in\irr(Y)$, $\ind_Y^X\phi$ will denote the induced character of $\phi$ to $X$ and $\irr(X|\phi)$ will denote the set of irreducible constituents of $\ind_Y^X\phi$.  Further, if $\chi\in\irr(X)\cup\ibr_\ell(X)$, $\mathrm{Bl}(X|\chi)$ will denote the block of the group $X$ containing $\chi$.  We will also denote by $\irr_0(X|D)$ the set of height-zero characters of $X$ which lie in any block with defect group $D$ and by $\dz(X)$ the set of defect-zero characters of $X$.
Given $\chi\in\irr(X)$, we will denote the central character associated to $\chi$ by $\omega_\chi$.  Let $\F_\ell$ be the field of cardinality $\ell$, and write $\overline{\F_\ell}$ for its algebraic closure. We will denote by $\ast$ a fixed isomorphism from the set of $\ell'$-roots of unity in $\C$ to $\overline{\F}_\ell^\times$, extended to a homomorphism $\ast\colon\widetilde{R}[G]\rightarrow \overline{\F}_\ell$ as in \cite[discussion before (15.19)]{isaacs}, where $\widetilde{R}\subset\C$ is the ring of local integers for the prime $\ell$, and set $\lambda_B=\omega_\chi^\ast$ for $B=\mathrm{Bl}(X|\chi)$, as in \cite[Chapter 15]{isaacs}.  Given a set $\mathfrak{S}$, write $\mathfrak{S}^+:=\sum_{x\in \mathfrak{S}} x$.  If $Y\leq X$ is a subgroup, and $b\in\mathrm{Bl}(Y)$, then the induced block $b^X$ is the unique block $B$ so that $\lambda_b^X(\mathcal{K}^+)=\lambda_B(\mathcal{K}^+)$ for all conjugacy classes $\mathcal{K}$ of $X$, if such a $B$ exists.  (In this situation, recall that $b^X$ is said to be defined.)  Recall that $\lambda_b^X(\mathcal{K}^+)$ is given by $\lambda_b\left((\mathcal{K}\cap Y)^+\right).$

If a group $X$ acts on a set $\mathfrak{S}$ and $\mathfrak{s}\subseteq \mathfrak{S}$, then we denote by $X_\mathfrak{s}$ or $\stab_X(\mathfrak{s})$ the subgroup of $X$ stabilizing $\mathfrak{s}$.  If $X$ acts on a group $Y$, we denote by $Y:X$ or $Y\rtimes X$ the semidirect product of $Y$ with $X$.  We may also say this is the extension of $Y$ by $X$.  In such situations, if $r$ is a positive integer and $p$ is a prime, we will write $Y:r$ if $X=C_r$ is the cyclic group of order $r$ and $Y:p^r$ if $X$ is elementary abelian of order $p^r$.

%\subsection{Some Relevant Deligne-Lusztig Theory}

Let $G$ be a finite group of Lie type, with $G=\underline{G}^F$ for a connected reductive algebraic group $\underline{G}$ of characteristic $p\neq\ell$ and a Frobenius map $F$, and write $G^\ast=(\underline{G}^\ast)^{F^\ast}$, where $(\underline{G}^\ast, F^\ast)$ is dual to $(\underline{G}, F)$.   We can write $\irr(G)$ as a disjoint union $\bigsqcup \mathcal{E}(G, (s))$ of rational Lusztig series corresponding to $G^\ast$- conjugacy classes of semisimple elements $s\in G^\ast$.  Recall that the characters in the series $\mathcal{E}(G, (1))$ are called unipotent characters, and there is a bijection $\mathcal{E}(G,(s))\rightarrow\mathcal{E}(C_{G^\ast}(s), (1))$ such that if $\chi\mapsto \psi$, then $\chi(1)=[G^\ast:C_{G^\ast}(s)]_{p'}\psi(1)$.  (See, for example, \cite[Theorem 13.23 and Remark 13.24]{dignemichel}.)

Let $t$ be a semisimple $\ell'$ - element of $G^\ast$ and write $\mathcal{E}_\ell(G,(t)):=\bigcup\mathcal{E}(G,(ut)),$ where the union is taken over all $\ell$-elements $u$ in $C_{G^\ast}(t)$.  By a fundamental result of Brou{\'e} and Michel \cite{brouemichel}, $\mathcal{E}_\ell(G,(t))$ is a union of $\ell$-blocks.  Hence, we may view $\mathcal{E}_\ell(G,(t))$ as a collection of $\ell$-Brauer characters as well as a set of ordinary characters.

%Moreover, it follows (see, for example \cite[Proposition 1]{hissmalle}) that the degree of any irreducible Brauer character $\varphi\in\mathcal{E}_\ell(G,(t))$ is divisible by $[G^\ast:C_{G^\ast}(t)]_{p'}$.  Hence, if $\chi\in\mathcal{E}_\ell(G,(t))\cap\irr(G)$ and $\chi(1)=[G^\ast:C_{G^\ast}(t)]_{p'}$, then $\wh{\chi}$ is irreducible.  Furthermore, if $H$ is a subgroup of $G$ such that the restriction $\varphi|_H$ to $H$ is irreducible, and $[G^\ast:C_{G^\ast}(t)]_{p'}>\mathfrak{m}_\ell(H)$, then $\varphi$ cannot be a member of $\mathcal{E}_\ell(G, (t))$.  Also, any irreducible Brauer character in $\mathcal{E}_\ell(G, (t))$ appears as a constituent of the restriction to $\ell$-regular elements for some ordinary character in $\mathcal{E}(G, (t))$ (see \cite[Theorem 3.1]{hissregssblocks}), so $\mathcal{E}_\ell(G, (1))$ is a union of unipotent blocks.  In particular, if $\varphi|_H$ is irreducible and  $[G^\ast:C_{G^\ast}(t)]_{p'}>\mathfrak{m}_\ell(H)$ for all nonidentity semisimple $\ell'$- elements $t$ of $G^\ast$, then $\varphi$ must belong to a unipotent block.

In \cite{bonnaferouquier}, Bonnaf{\'e} and Rouquier show that when $C_{\underline{G}^\ast}(t)$ is contained in an $F^\ast$-stable Levi subgroup, $\underline{L}^\ast$, of $\underline{G}^\ast$, then Deligne-Lusztig induction $R_L^G$ yields a Morita equivalence between $\mathcal{E}_\ell(L,(t))$ and $\mathcal{E}_\ell(G,(t))$, where $L=\underline{L}^{F}$ and $(\underline{L}, F)$ is dual to $(\underline{L}^\ast, F^\ast)$.

Note that when $G=Sp_6(q)$, $q$ even, with $G=\underline{G}^F$ and $(\underline{G}^\ast, F^\ast)$ in duality with $(\underline{G}, F)$, each semisimple conjugacy class $(s)$ of $G^\ast=(\underline{G}^\ast)^{F^\ast}$ satisfies that $|s|$ is odd.  (Here $|s|$ denotes the order of the element $s$.)  Hence by  \cite[Lemma 13.14(iii)]{dignemichel}, the centralizer $C_{\underline{G}^\ast}(s)$ is connected. While applying Deligne-Lusztig theory to $Sp_{2n}(q)$ with $q$ even, it is convenient to view $Sp_{2n}(q)$ as $SO_{2n+1}(q)\cong Sp_{2n}(q)$, so that $G^\ast = Sp_{2n}(q)$.   The following lemma is \cite[Lemma 2.4]{SchaefferFry}.

\begin{lemma}\label{lem:moritaequiv}
Let $G^\ast=Sp_6(q)$, $q$ even, with $G=\underline{G}^F$ and $(\underline{G}^\ast, F^\ast)$ in duality with $(\underline{G}, F)$.  The nontrivial semisimple conjugacy classes $(s)$ of $G^\ast$ each satisfy $C_{\underline{G}^\ast}(s)=\underline{L}^\ast$ for an $F^\ast$-stable Levi subgroup $\underline{L}^\ast$ of $\underline{G}^\ast$ with $C_{G^\ast}(s)=(\underline{L}^\ast)^{F^\ast}=:L^\ast$.  In particular, Bonnaf{\'e}-Rouquier's theorem \cite{bonnaferouquier} implies that there is a Morita equivalence $\mathcal{E}_\ell(L,(1))\rightarrow\mathcal{E}_\ell(G,(t))$ given by Deligne-Lusztig induction (composed with tensoring by a suitable linear character) when $t\neq 1$ is a semisimple $\ell'$-element, where $L=\underline{L}^{F}$ and $(\underline{L}, F)$ is dual to $(\underline{L}^\ast, F^\ast)$.
\end{lemma}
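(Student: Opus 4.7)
The proof proceeds by examining how a semisimple $s\in\underline{G}^\ast$ acts on the natural $6$-dimensional symplectic module $V$ over $\overline{\F_q}$. As noted in the paragraph preceding the lemma, since $q$ is even every semisimple element of $G^\ast$ has odd order, so $C_{\underline{G}^\ast}(s)$ is connected by \cite[Lemma 13.14(iii)]{dignemichel} and we will have $C_{G^\ast}(s)=C_{\underline{G}^\ast}(s)^{F^\ast}=(\underline{L}^\ast)^{F^\ast}$ as soon as $C_{\underline{G}^\ast}(s)$ is shown to be an $F^\ast$-stable Levi.

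First I would decompose $V=\bigoplus_\lambda V_\lambda$ into $s$-eigenspaces. The symplectic form puts $V_\lambda$ and $V_{\lambda^{-1}}$ into perfect duality, and the crucial characteristic-$2$ observation is that $\lambda=\lambda^{-1}$ forces $\lambda=1$. Consequently $V_1$ inherits a non-degenerate symplectic form, while for each unordered pair $\{\lambda,\lambda^{-1}\}$ with $\lambda\neq 1$ the subspace $V_\lambda\oplus V_{\lambda^{-1}}$ is a hyperbolic pair. A direct computation then gives
\[
C_{\underline{G}^\ast}(s)\;\cong\;Sp(V_1)\;\times\;\prod_{\{\lambda,\lambda^{-1}\}:\lambda\neq 1}GL(V_\lambda)\;\cong\;Sp_{2m_0}\times\prod_\lambda GL_{m_\lambda},
\]
with $m_0+\sum m_\lambda=3$. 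The Frobenius $F^\ast$ acts as $\lambda\mapsto\lambda^q$ on eigenvalues and therefore stabilizes this subgroup.

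The key structural point is that every such product is a Levi subgroup of $\underline{G}^\ast=Sp_6$: the Levi subgroups of $Sp_{2n}$ have exactly the shape $GL_{n_1}\times\cdots\times GL_{n_r}\times Sp_{2k}$ with $\sum n_i+k=n$, and in rank $3$ each combination arising above is realized by a standard parabolic. This is in contrast to odd characteristic, where a nontrivial $(-1)$-eigenspace would produce a second $Sp$-factor, yielding a pseudo-Levi centralizer $Sp_{2k}\times Sp_{2m}$ that is not a Levi — the characteristic-$2$ coincidence $-1=1$ is exactly what rescues us. Since $s\neq 1$ forces at least one nontrivial eigenvalue, we obtain $\underline{L}^\ast\subsetneq\underline{G}^\ast$, and the equality $C_{G^\ast}(s)=L^\ast$ follows from connectedness.

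The main obstacle is really only this structural identification; once $C_{\underline{G}^\ast}(s)$ is known to be the Levi $\underline{L}^\ast$, the hypothesis of Bonnafé–Rouquier \cite{bonnaferouquier} is trivially satisfied ($C_{\underline{G}^\ast}(t)\subseteq\underline{L}^\ast$), giving a Morita equivalence $\mathcal{E}_\ell(L,(t))\to\mathcal{E}_\ell(G,(t))$. Finally, following the standard device used in Jordan decomposition for groups with connected centre, tensoring with a suitable linear character of $L$ (coming from the central character of $t$ in $L^\ast$) identifies $\mathcal{E}_\ell(L,(t))$ with $\mathcal{E}_\ell(L,(1))$ as in \cite[Remark 13.24]{dignemichel}, completing the proof of the final assertion.
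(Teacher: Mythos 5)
The paper does not supply its own argument for this lemma; it is stated with the remark ``The following lemma is \cite[Lemma 2.4]{SchaefferFry}'' and no proof is given, so there is nothing in the present text to compare against. That said, your proof is correct and is the standard one: decompose the natural module $V=\bigoplus_\lambda V_\lambda$ into $s$-eigenspaces, observe that the symplectic form pairs $V_\lambda$ with $V_{\lambda^{-1}}$, and use the characteristic-$2$ identity $\lambda^2=1\Rightarrow\lambda=1$ to rule out a second $Sp$-factor, so that $C_{\underline{G}^\ast}(s)\cong Sp(V_1)\times\prod GL(V_\lambda)$ with $\dim V_1/2 + \sum \dim V_\lambda=3$, and all such groups are Levi subgroups of $Sp_6$. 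Two very minor points: the term ``hyperbolic pair'' is nonstandard (you mean that $V_\lambda$ and $V_{\lambda^{-1}}$ are totally isotropic complements in duality, so $V_\lambda\oplus V_{\lambda^{-1}}$ is a nondegenerate symplectic subspace on which the centralizer acts as $GL(V_\lambda)$); and strictly speaking one should say a word about why $C_{\underline{G}^\ast}(s)$ is not merely abstractly isomorphic to a Levi but actually \emph{is} a Levi, e.g.\ by observing that its root system relative to a maximal torus containing $s$ is of type $C_k\times A_{n_1-1}\times\cdots\times A_{n_r-1}$, which is a Levi sub-root system of $C_3$, or equivalently that $C_{\underline{G}^\ast}(s)=C_{\underline{G}^\ast}(Z(C_{\underline{G}^\ast}(s))^\circ)$. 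The final paragraph (Bonnaf\'e--Rouquier plus the twist by $\hat{t}$ for $t\in Z(L^\ast)^{F^\ast}$) is exactly right.
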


Let $q$ be a power of $2$.  In light of the above discussion and our need to understand radical subgroups, it will often be useful to understand the structure of centralizers of semisimple elements of $Sp_{2n}(q)$.   Given a monic irreducible polynomial $f\in\F_{q}[t]$ of degree $d$ with a root $\alpha$ in some splitting field for $f$, let $f^\checkmark$ denote the monic irreducible polynomial of degree $d$ with roots $\alpha^{-1}, \alpha^{-q},...,\alpha^{-q^{d-1}}$.

\begin{lemma}\label{lem:centralizersemisimpleSp}
Let $G=Sp_{2n}(q)$ with $q$ even, and let $s$ be a semisimple element of $G$.  Decompose the characteristic polynomial $P(t)\in\F_q[t]$ of $s$ acting on the natural module $\F_q^{2n}$ in the form $P(t)=(t-1)^{m_0}\cdot\prod_i f_i(t)^{m_i} \cdot \prod_{j} g_j(t)^{n_j}g_j^\checkmark(t)^{n_j}$, where each $f_i$, $g_j$ is an irreducible polynomial over $\F_q$, $f_i=f_i^\checkmark$ has degree $d_i$ and does not have $1$ as a root, $g_j\neq g_j^\checkmark$ with $\deg g_j=\deg g_j^\checkmark = k_j$, and $2n=m_0+\sum_i d_im_i+2\sum_jk_jn_j$.
Then
\[C_G(s)\cong Sp_{m_0}(q)\oplus \bigoplus_i GU_{m_i}(q^{d_i/2})\oplus \bigoplus_j GL_{n_j}(q^{k_j}).\]
\end{lemma}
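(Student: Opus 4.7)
The plan is to reduce to primary components of the natural module and then analyze each component separately using the interaction of $s$ with the symplectic form. Let $V=\F_q^{2n}$ carry the symplectic form $\langle\cdot,\cdot\rangle$ preserved by $G$, and let $V_0=\ker(s-1)^{m_0}$, $V_i=\ker f_i(s)^{m_i}$, and $W_j=\ker g_j(s)^{n_j}g_j^\checkmark(s)^{n_j}$, so that
\[
V = V_0 \oplus \bigoplus_i V_i \oplus \bigoplus_j W_j
\]
is the primary decomposition of $V$ under $s$. The first step is the standard observation that since $s$ preserves $\langle\cdot,\cdot\rangle$, we have $\langle u,w\rangle\ne 0$ (for $u$ in the generalized $\alpha$-eigenspace and $w$ in the generalized $\beta$-eigenspace) only if $\alpha\beta=1$. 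Combined with the definition of $f^\checkmark$, this forces $V_0$, each $V_i$, and each $W_j$ to be non-degenerate and mutually orthogonal, so
\[
C_G(s) \cong C_{Sp(V_0)}(s|_{V_0}) \times \prod_i C_{Sp(V_i)}(s|_{V_i}) \times \prod_j C_{Sp(W_j)}(s|_{W_j}).
\]

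Next I would identify each factor explicitly. On $V_0$, $s$ acts trivially and the factor is $Sp_{m_0}(q)$ by definition. On $W_j$, $g_j\ne g_j^\checkmark$ gives $W_j=U_j\oplus U_j'$ with $U_j=\ker g_j(s)^{n_j}$ and $U_j'=\ker g_j^\checkmark(s)^{n_j}$ totally isotropic of dimension $k_jn_j$, paired non-degenerately by $\langle\cdot,\cdot\rangle$; an element of the centralizer on $W_j$ must commute with $s|_{U_j}$, which makes $U_j$ into a free module of rank $n_j$ over the field $\F_q[s|_{U_j}]\cong\F_{q^{k_j}}$, and its restriction to $U_j'$ is determined by the requirement of preserving the pairing. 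This gives $GL_{n_j}(q^{k_j})$.

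The main obstacle is the self-reciprocal case $V_i$, which produces the unitary factor. Since $f_i$ is irreducible, self-reciprocal, and has no root equal to $1$ (and $-1=1$ as $q$ is even), its degree $d_i$ is even, and the map $\alpha\mapsto\alpha^{-1}$ on roots is realized by the $q^{d_i/2}$-power Frobenius, giving an order-$2$ field automorphism of $K_i:=\F_q[s|_{V_i}]^{\rm fix}\cong \F_{q^{d_i}}$ with fixed field $\F_{q^{d_i/2}}$. Here I would verify that the $\F_q$-bilinear form $\langle\cdot,\cdot\rangle$ on $V_i$, combined with the trace from $K_i$ to $\F_q$, induces a non-degenerate Hermitian form on $V_i$ viewed as a free $K_i$-module of rank $m_i$ (this is where one must carefully exploit $f_i=f_i^\checkmark$ to see that the sesquilinearity is with respect to exactly the order-$2$ involution above). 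Centralizers of $s|_{V_i}$ in $Sp(V_i)$ then correspond precisely to isometries of this Hermitian form, giving $GU_{m_i}(q^{d_i/2})$. Assembling the three pieces yields the claimed isomorphism.
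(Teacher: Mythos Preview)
Your proposal is correct and follows essentially the same approach as the paper: decompose $V$ orthogonally into primary components for $s$, reduce $C_G(s)$ to a product of centralizers in the smaller symplectic groups, and identify each factor as $Sp_{m_0}(q)$, $GU_{m_i}(q^{d_i/2})$, or $GL_{n_j}(q^{k_j})$. The paper in fact states this lemma without proof (its proof sketch was suppressed), so your write-up, particularly the trace-form argument producing the Hermitian structure in the self-reciprocal case, supplies exactly the details the paper omits.
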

\begin{comment}
\begin{proof}
Let $U:=\ker(s-1), V_i:=\left[\frac{P(t)}{f_i(t)^{m_i}}\right]_{t=s}(V),$ and
$W_j:=\left[\frac{P(t)}{(g_j(t)g_j^\checkmark(t))^{n_j}}\right]_{t=s}(V).$ Then
 $V=U\oplus \bigoplus_i V_i \oplus \bigoplus_j W_j$ is an orthogonal decomposition, and we can write $s=s_U\prod_i s_{V_i}\prod_j s_{W_j}$ where $s_Y$ denotes the action of $s$ on the subspace $Y$, and $C_G(s)=C_{Sp(U)}(s_U)\oplus\bigoplus_i C_{Sp(V_i)}(s_{V_i})\oplus \bigoplus_j C_{Sp(W_j)}(s_{W_j})$.  Hence it suffices to see that $C_{Sp(U)}(s_U)\cong Sp_{m_0}(q)$, $C_{Sp(V_i)}(s_{V_i})\cong GU_{m_i}(q^{d_i/2})$, and $C_{Sp(W_j)}(s_{W_j})\cong GL_{n_j}(q^{k_j})$.
\end{proof}
Now, in the above notation, we certainly have $N_{Sp(U)}(s_U)=C_{Sp(U)}(s_U)$.  Moreover, $s_Y$ is conjugate in $Sp(Y)$ to some power $s^t$ if and only if $t\in \{\pm q,\pm q^2,...,\pm q^{k_j}\}$ for $Y=W_j$ or $t\in \{q,q^2,...,q^{d_i}\}$ if $Y=V_i$.  Hence $N_{Sp(Y)}(\langle s\rangle)/C_{Sp(Y)}(s)$ is $\langle \tau,\beta\rangle$ if $Y=W_j$ or $\langle \beta\rangle$ if $Y=V_i$, where $\tau\colon s\mapsto s^{-1}$, $\beta\colon s\mapsto s^q$.
\end{comment}

For the remainder of the paper, $\ell$ is an odd prime and $q$ is a power of $2$.  We note that $|Sp_6(q)|=q^9(q^2-1)(q^4-1)(q^6-1)$, so if $\ell$ is a prime dividing $|Sp_6(q)|$ and $\ell\neq 3$, then $\ell$ must divide exactly one of $q-1$, $q+1$, $q^2+1$, $q^2+q+1$, or $q^2-q+1$.  If $\ell=3$, then it divides $q-1$ if and only if it divides $q^2+q+1$, and it divides $q+1$ if and only if it divides $q^2-q+1$.  When $\ell$ divides $q^6-1$, we will write $\epsilon\in\{\pm1\}$ for the number such that $\ell|(q^3-\epsilon)$.  If $\ell\neq 3$, we write $d$ for the integer such that $(q^6-1)_\ell=\ell^d$.  (Here $r_\ell$ is the $\ell$-part of the integer $r$.) If $\ell=3$, we write $d$ for the integer such that $3^d=(q-\epsilon)_3$, so that $(q^3-\epsilon)_3=3^{d+1}$.  In any case, we will denote by $m$ the integer $(q-\epsilon)_{\ell'}$.

We will also borrow from CHEVIE \cite{chevie} the notation for characters of $Sp_6(q)$ and the roots of unity $\zeta_i:=\exp\left(\frac{2\pi\sqrt{-1}}{q^i-1}\right)$ and $\xi_i:=\exp\left(\frac{2\pi\sqrt{-1}}{q^i+1}\right)$.  We will sometimes also use $\widetilde{\zeta}_i$ or $\widetilde{\xi}_i$ to denote a corresponding root of unity in $\overline{\F}_q^\times$.

The following sets for indices will be useful.  For $\epsilon\in\{\pm1\},$ let $I^0_{q-\epsilon}$ be the set
\begin{equation}\label{eq:I0q}
I^0_{q-\epsilon}:=\{i\in\Z: 1\leq i\leq q-\epsilon-1\},\end{equation} and let $I_{q-\epsilon}$ be a set of class representatives on $I_{q-\epsilon}^0$ under the equivalence relation $
i\sim j\iff i\equiv \pm j\mod (q-\epsilon)$. Let \begin{equation}\label{eq:I0q2}
I^0_{q^2+1}:=\{i\in \Z: 1\leq i\leq q^2\}\hbox{  and  } I^0_{q^2-1}:=\{i\in\Z: 1\leq i\leq q^2-1, (q-1)\not |\, i, (q+1)\not|\, i\},\end{equation} and let $I_{q^2-\epsilon}$ be a set of representatives for the equivalence relation on $I_{q^2-\epsilon}^0$ given by $i\sim j\iff i\equiv\pm j$ or $\pm qj\mod(q^2-\epsilon)$.  Similarly, let \begin{equation}\label{eq:I0q3}
I_{q^3-\epsilon}^0:=\{i\in\Z: 1\leq i\leq q^3-\epsilon; (q^2+\epsilon q+1)\not|\,i\}\end{equation} and $I_{q^3-\epsilon}$ a set of representatives for the equivalence relation on $I_{q^3-\epsilon}^0$ given by $i\sim j \iff i\equiv\pm j, \pm qj,$ or $\pm q^2j\mod(q^3-\epsilon)$.  Given one of these indexing sets, $I_{\ast}$, we write $I_{\ast}^k$ for the elements $(i_1,...,i_k)$ of $I_{\ast}\times I_{\ast}...\times I_{\ast}$ ($k$ copies) with none of $i_1, i_2,..., i_k$ the same and $I_{\ast}^{k*}$ for the set of equivalence classes of $I_{\ast}^{k}$ under $(i_1,...,i_k)\sim (i_{\rho(1)},..., i_{\rho(k)})$ for all $\rho\in S_k$.

Let $G:=Sp_6(q)$ and let $\E_1$ denote the set of unipotent characters and $\mathcal{E}_i(J)$ denote the Lusztig series $\mathcal{E}(G, (s))$ for $G$, where $s$ is conjugate in $G^\ast$ to the semisimple element $g_i(J)$ in the notation of \cite[Tabelle 19]{Luebeckthesis}. Here $J$ denotes the proper indices (for example, for the family $g_6$, $J=(i)$ for $i\in I_{q-1}$, and for the family $g_{32}$, $J=(i,j,k)$ where $(i,j,k)\in I_{q+1}^{3*}$).  For the convenience of the reader, we have included the list of characters found in $\mathcal{E}_i(J)$, along with the size and number of series of a given type in the appendix.  % of the ArXiv version of this paper.

D. White \cite{white2000} has calculated the decomposition numbers for the unipotent blocks of $G$, up to a few unknowns in the case $\ell|(q+1)$.  Moreover, using the theory of central characters, which are available in the CHEVIE system \cite{chevie} for $G$, it is straightforward to determine the block distribution of the remaining complex characters. Using this information, \prettyref{lem:moritaequiv}, \prettyref{lem:centralizersemisimpleSp}, and the knowledge of the decomposition matrices of lower-rank groups (see \cite{white95}, \cite{james1990},  \cite{okuyamawakiSp4}, \cite{geckdecompSU}, \cite{okuyamawakiSU3}), we can then determine the description of Brauer characters of $Sp_6(q)$ in terms of the restrictions of ordinary characters to $\ell$-regular elements. We omit these descriptions here, but this information can be found in \cite{SchaefferFryThesis} and in the appendix.  % of the ArXiv version of this paper.
In particular, the set $\E\left(G, (g_i(J))\right)$ forms a basic set for the blocks of $\E_\ell\left(G, (g_i(J))\right)$ for the semisimple $\ell'$-elements $g_i(J)$, and there is an $\aut(G)$-equivariant bijection from this basic set to the set of Brauer characters in $\E_\ell\left(G, (g_i(J))\right)$.

Given $\ell|(q^2-1)$, we will denote by $B_i(J)$ the $\ell$-blocks in $\mathcal{E}_\ell(G, (s))$ where $s$ is conjugate in $G^\ast$ to the semisimple element $g_i(J)$ in the notation of \cite[Tabelle 19]{Luebeckthesis} and by $\mathfrak{B}_i(J)$ the set $\ibr_\ell(B_i(J))$ of irreducible Brauer characters in the block.  In most cases, $C_{G^\ast}(s)$ has only one unipotent block, and therefore $\mathcal{E}_\ell(G,(s))$ is a single block.  However, when multiple such blocks exist, which occurs for $i=6,7,8,9$ when $\ell|(q^2-1)$, we will denote by $B_i(J)^{(0)}$ the block corresponding in the Bonnaf{\'e}-Rouquier correspondence to the principal block of $C_{G^\ast}(s)$ and by $B_i(J)^{(1)}$ the block corresponding to the unique other block of positive defect.  We will use $\mathfrak{B}_i(J)^{(0)}, \mathfrak{B}_i(J)^{(1)}$ to denote the corresponding irreducible Brauer characters.  Further, $B_0$ and $B_1$ will denote the  principal block and the cyclic unipotent block, respectively, as described in \cite{white2000}, and $\mathfrak{B}_0, \mathfrak{B}_1$ will denote $\ibr_\ell(B_0)$ and $\ibr_\ell(B_1)$, respectively.

 %KEY FOR INDICES:
%  \[I_{q+1}=\{i:1\leq i\leq q|i\equiv -i\}\]
%  \[I_{q-1}=\{i:1\leq i\leq q-2|i\equiv -i\}\]
%  \[I_{q^2+1}=\{i:1\leq i\leq q^2|i\equiv -i\equiv qi\equiv -qi\}\]
%  \[I_{q^2-1}=\{i:1\leq i\leq q^2-2|(q-1)\not|i; (q+1)\not|i; i\equiv -i\equiv qi\equiv -qi\}\]
%  \[I_{q^3-1}=\{i:1\leq i\leq q^3-2|(q^2+q+1)\not|i; i\equiv -i\equiv qi\equiv -qi\equiv q^2i\equiv -q^2i\}\]
%  \[I_{q^3+1}=\{i:1\leq i\leq q^3|(q^2-q+1)\not|i;i\equiv -i\equiv qi\equiv -qi\equiv q^2i\equiv -q^2i\}\]

\section{Radical Subgroups of $Sp_6(2^a)$ and $Sp_4(2^a)$}\label{sec:radsubs}

In this section we describe the radical subgroups of $Sp_6(q)$ with $q$ even and their normalizers.  In \cite{An94}, J. An describes the radical subgroups for $Sp_{2n}(q)$ with odd $q$, and his results in the first two sections extend to $Sp_6(q)$ when $q$ is even and $\ell|(q^2-1)$, so we will often refer the reader there.  We begin by setting some notation for the subgroups of $Sp_6(q)$ that will be of interest.

Let $G=Sp_{2n}(q)$ with $q$ even and let $\{e_1,e_2,...,e_n,f_1,f_2,...,f_n\}$ be the standard symplectic basis for the natural module $\F_q^{2n}$ for $G$.  For $r\leq n$, we can view $Sp_{2r}(q)$ as a subgroup of $G$ by identification with the pointwise stabilizer $\stab_G(e_{r+1},...,e_n, f_{r+1},..,f_n)$, and by iterating this, we see that for integers $r_1\geq r_2\geq...\geq r_n\geq 0$ so that $s:=\sum_{i=1}^n r_i\leq n$, we may view the direct product $\prod_i Sp_{2r_i}(q)$ as a subgroup of $G$ which stabilizes (point-wise) a $2(n-s)$-dimensional subspace of $\F_q^{2n}$. Moreover, we may further view $GL_{r}^\pm(q)$ as a subgroup of $Sp_{2r}(q)$, so that $\prod_i GL_{r_i}^\pm(q) \leq G$ under this embedding.  We will also require the embeddings $GL_{1}^\pm(q^3)\leq GL_{3}^\pm(q)$ and $GU_1(q^2)\leq GL_2(q)$.  (Here we use the notation $GL_r^+(q):=GL_r(q)$ and $GL_r^-(q):=GU_r(q)$.)

Now specialize $n=3$, so $G=Sp_6(q)$, and write $H:=Sp_4(q)=\stab_G(e_3,f_3)$.  Suppose first that $\ell|(q^2-1)$, and let $\epsilon\in\{\pm1\}$ be such that $\ell|(q-\epsilon)$, with $(q-\epsilon)_\ell=\ell^d$.  (We will also write $\epsilon$ for the corresponding sign $\pm$.)   Let $r_1\geq r_2\geq r_3\geq0$ be as in our discussion above, and define $Q_{r_1,r_2,r_3}:=\textbf{O}_\ell\left(Z\left(\prod_{i=1}^3 GL_{r_i}^\epsilon(q)\right)\right)$, viewed as an $\ell$-subgroup of $G$ under the embedding described above.  Then $C_G(Q_{r_1,r_2,r_3})=Sp_{2(n-s)}(q)\times\prod_{i=1}^3 GL_{r_i}^\epsilon(q)$, and if $c_i$ is the number of times $r_i$ appears, then $N_G(Q_{r_1,r_2,r_3})=Sp_{2(n-s)}(q)\times\prod \left(GL_{r_i}^\epsilon(q):2\right)\wr S_{c_i}$, where the product is now taken over the $i$ so that each distinct $r_i$ appears only once.  (This can be seen from direct calculation, or by arguments similar to those in \cite[Sections 1 and 2]{An94}.)  Here we can view $GL_{r_i}^\epsilon(q)$ as its image under the map $A\mapsto \diag(A, \tw{T}A^{-1})$, possibly viewed in the overgroup $Sp_{2r_i}(q^2)$, with the $C_2$ extension inducing the graph automorphism $\tau_{r_i}^{(\epsilon)} \colon A\mapsto \tw{T}A^{-1}$ on $GL_{r_i}^\epsilon(q)$.  When $r_i=0$ for some $i$, we will suppress the notation, so that we will write, for example, $Q_1$ rather than $Q_{1,0,0}$, and $Q_{1,1}$ rather than $Q_{1,1,0}$.  Hence $Q_1, Q_2,$ and $Q_3$ are cyclic groups of order $\ell^d$, $Q_{1,1}$ and $Q_{2,1}$ are isomorphic to $C_{\ell^d}\times C_{\ell^d}$, and $Q_{1,1,1}$ is isomorphic to $C_{\ell^d}\times C_{\ell^d}\times C_{\ell^d}$.  Moreover, notice that $Q_{1,1}\in\Syl_\ell(H)$, and when $\ell\neq 3$, $Q_{1,1,1}\in\Syl_\ell(G)$.

If $\ell=3$, let $P$ denote the Sylow subgroup, which is $Q_{1,1,1}\rtimes C_3$, or $C_{\ell^d}\wr C_3$, which we can view inside $Sp_2(q)\wr S_3\leq G$.  Write $Z:=Q_3$ and let $R\leq GL_3^\epsilon(q)$ be the embedding of the symplectic-type group which is the central product of $Z$ and an extraspecial group $E$ of order $27$ with exponent $3$, as in \cite[(1A) and (1B)]{An94}.

Now suppose that $\ell|(q^4+q^2+1)$, and let $\epsilon$ be so that $\ell|(q^2+\epsilon q+1)$.  Write $Q^{(3)}:=\textbf{O}_\ell(Z(GL_1^\epsilon(q^3)))$.  When $\ell\neq 3$, $Q^{(3)}$ is a cyclic Sylow $\ell$-subgroup of $G$ of order $(q^2+\epsilon q+1)_\ell$.  When $\ell=3$, we have $3|(q-\epsilon)$ as well, and $(q^2+\epsilon q+1)_3=3$.  In this case, $Q^{(3)}$ is a cyclic group of order $3^{d+1}$ where $(q-\epsilon)_3=3^d$.  When $\ell|(q^2+1)$, write $Q^{(2)}:=\textbf{O}_\ell(Z(GU_1(q^2)))$ so that $Q^{(2)}$ is a cyclic Sylow $\ell$-subgroup of $G$.

Let $s:=s_3, s_2$ be a generator of $Q^{(3)}, Q^{(2)},$ respectively.  Write $N:=N_G(\langle s\rangle)$ and $C:=C_G(\langle s\rangle)$.  From the description in \cite[Tabelles 7,10]{Luebeckthesis} of semisimple classes of $G$, we see that $s$ is conjugate to $s^i$ if and only if $i\in\{\pm q,..., \pm q^{j}\}$, where $j=3,2$ respectively, so that $N/C=\langle \tau,\beta\rangle$ is generated by $\tau\colon s\mapsto s^{-1}$, $\beta\colon s\mapsto s^q$.  Moreover, $C_G(s_3)=C_{q^3-\epsilon}$ and $C_G(s_2)=C_H(s_2)\times Sp_2(q)=C_{q^2+1}\times Sp_2(q)$, so $N_G(Q^{(3)})=C_{q^3-\epsilon}:6$ and $N_G(Q^{(2)})=N_H(Q^{(2)})\times Sp_2(q)=C_{q^2+1}:2^2\times Sp_2(q)$.

\begin{proposition}\label{prop:radsubs}
\begin{enumerate}
\item Let $G=Sp_6(q)$ with $q$ even and let $Q$ be a nontrivial $\ell$-radical subgroup of $G$ for a prime $\ell\neq 2$ dividing $|G|$.  Then:
\begin{itemize}
\item If $3\neq \ell|(q^2-1)$, then $Q$ is $G$-conjugate to one of $Q_1, Q_2, Q_3, Q_{1,1}, Q_{2,1}$ or $Q_{1,1,1}$.
\item If $\ell=3|(q^2-1)$, then $Q$ is $G$-conjugate to one of $Q_1, Q_2, Q_3, Q_{1,1}, Q_{2,1}$, $Q_{1,1,1}, Q^{(3)}, P$ or $R$.
\item If $3\neq \ell|(q^4+q^2+1)$, then $Q$ is $G$-conjugate to $Q^{(3)}$.
\item If $\ell|(q^2+1)$, then $Q$ is $G$-conjugate to $Q^{(2)}$.
\end{itemize}

\item Let $H=Sp_4(q)$ with $q$ even (viewed as $\stab_G(e_3,f_3)$) and let $Q$ be a nontrivial $\ell$-radical subgroup of $H$ for a prime $\ell\neq 2$ dividing $|H|$.  Then:
\begin{itemize}
\item If $\ell|(q^2-1)$, then $Q$ is $H$-conjugate to one of $Q_1, Q_2,$ or $Q_{1,1}$.
\item If $\ell|(q^2+1)$, then $Q$ is $H$-conjugate to $Q^{(2)}$.
\end{itemize}
\end{enumerate}

Moreover, no two of the subgroups listed are $G$-conjugate.
\end{proposition}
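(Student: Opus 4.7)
The plan is to mimic the arguments of An in \cite[Sections 1--2]{An94}, which were carried out for $Sp_{2n}(q)$ with $q$ odd, and adapt them to our even-$q$ setting. The starting observation is that if $Q$ is a radical $\ell$-subgroup of $G=Sp_{2n}(q)$, then $Q$ lies inside the $\ell$-part of the centralizer of a suitable semisimple $\ell'$-element of $G$, and the structure of these centralizers was already recorded in \prettyref{lem:centralizersemisimpleSp}. Concretely, I would fix a radical $Q$, look at $Z(Q)$ (or the unique minimal characteristic subgroup in the $\ell=3$ case), and locate it inside one of the centralizer types $Sp_{m_0}(q)\times\prod GU_{m_i}(q^{d_i/2})\times\prod GL_{n_j}(q^{k_j})$; this forces $Q$ to sit inside the normalizer of such a Levi-like subgroup, and then the defining equation $Q=\mathbf{O}_\ell(N_G(Q))$ pins $Q$ down up to conjugacy.

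I would organize the argument by which cyclotomic factor of $|G|=q^9(q^2-1)(q^4-1)(q^6-1)$ the prime $\ell$ divides. In the generic case $\ell\mid(q^2-1)$ with $\ell\neq 3$ the Sylow $\ell$-subgroup is abelian (indeed $Q_{1,1,1}\cong C_{\ell^d}^3$), so every radical subgroup is itself abelian and must be the $\ell$-part of the centre of a product of $GL_{r_i}^\epsilon(q)$-factors stabilising an orthogonal decomposition of $\F_q^{2\cdot 3}$ into nondegenerate subspaces of dimensions $2r_1,2r_2,2r_3$ with $r_1+r_2+r_3\leq 3$; enumerating these partitions $(r_1,r_2,r_3)$ of size $\leq 3$ yields exactly the six listed candidates $Q_1,Q_2,Q_3,Q_{1,1},Q_{2,1},Q_{1,1,1}$. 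For $\ell\mid(q^2+1)$ the Sylow is cyclic of order $(q^2+1)_\ell$, coming from $GU_1(q^2)\leq Sp_2(q)\wr S_2\leq G$, and for $\ell\mid(q^2\pm q+1)$ with $\ell\neq 3$ the Sylow is cyclic of order $(q^2\pm q+1)_\ell$, coming from $GL_1^\epsilon(q^3)$; in both cyclic-Sylow cases the only nontrivial radical subgroup is the Sylow itself, giving $Q^{(2)}$ resp.\ $Q^{(3)}$.

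The main obstacle will be the case $\ell=3\mid(q^2-1)$, where the Sylow subgroup $P=Q_{1,1,1}\rtimes C_3$ is nonabelian and new symplectic-type radical subgroups appear. Here I would follow An's recipe verbatim (\cite[(1A),(1B)]{An94}): the abelian radical subgroups contained in a maximal torus of type $C_{q-\epsilon}^3$ give $Q_1,Q_2,Q_3,Q_{1,1},Q_{2,1},Q_{1,1,1}$ as before, the cyclic subgroup $Q^{(3)}\leq GL_1^\epsilon(q^3)$ survives because $3$ divides both $q-\epsilon$ and $q^2+\epsilon q+1$, and the remaining nonabelian candidates are forced by the structure theorem for $\ell$-subgroups of $GL_3^\epsilon(q)$ to be $G$-conjugate either to $P$ itself or to the central product $R=Z\ast E$ with $E$ extraspecial of order $27$ and exponent $3$. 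The verification that these (and only these) satisfy $Q=\mathbf{O}_\ell(N_G(Q))$ is routine once the normaliser descriptions preceding the statement are in hand.

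For part (2) on $H=Sp_4(q)=\stab_G(e_3,f_3)$, the same case analysis applies with $n=2$, and only the partitions of size $\leq 2$ survive, leaving $Q_1,Q_2,Q_{1,1}$ in the $\ell\mid(q^2-1)$ case and the cyclic Sylow $Q^{(2)}$ when $\ell\mid(q^2+1)$; note that $3\mid(q^2-1)$ in this context still gives abelian Sylow since $|H|_3=3^{2d}$ is already realised inside $Q_{1,1}$. Finally, the claim that no two listed subgroups are $G$-conjugate is handled by a direct invariant check: they have pairwise distinct orders, or (for $Q_{1,1}$ vs.\ $Q_{2,1}$ etc.) their centralizers in $G$ have distinct isomorphism types by the explicit formulas for $C_G(Q_{r_1,r_2,r_3})$ and $C_G(Q^{(i)})$ recorded before the proposition, so no conjugation of $G$ can identify them.
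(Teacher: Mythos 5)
Your overall strategy lines up with the paper's: cite the classification of An \cite[(2D)]{An94} (with adjustments for even $q$) when $\ell\mid(q^2-1)$, handle the remaining primes by exploiting cyclicity of the Sylow $\ell$-subgroup, and distinguish the listed subgroups by their centralizers. The partition/orthogonal-decomposition sketch you give for the abelian case is essentially An's argument unpacked, and your handling of $Sp_4(q)$ (including the observation that $3\mid(q^2-1)$ still gives abelian Sylow since $|H|_3=3^{2d}$ is already realised in $Q_{1,1}$) is fine.

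However, there is a genuine gap in the cyclic-Sylow step. You assert that ``in both cyclic-Sylow cases the only nontrivial radical subgroup is the Sylow itself,'' as though this followed formally from cyclicity. That is not a general fact. For instance, take $X=C_7\rtimes C_9$ with $\ell=3$, the $C_9$ acting on $C_7$ through $C_9\twoheadrightarrow C_3\hookrightarrow\mathrm{Aut}(C_7)$. The Sylow $3$-subgroup $P\cong C_9$ is cyclic, but the kernel $Q\cong C_3$ of the action is central, $O_3(X)=Q$ (since $P$ is not normal), and so $Q=O_3(N_X(Q))$ is a proper nontrivial $3$-radical subgroup of $X$. So you cannot appeal to cyclicity alone; you must use the specific structure of $G=Sp_6(q)$. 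The paper does exactly this: for a generator $s$ of the cyclic Sylow, \prettyref{lem:centralizersemisimpleSp} shows every nontrivial power $s^i$ has the \emph{same} centralizer as $s$, and this centralizer has a cyclic direct factor containing $\langle s\rangle$; consequently $\langle s\rangle$ is characteristic in $C_G(\langle s^i\rangle)$, hence normal in $N_G(\langle s^i\rangle)$, so $O_\ell(N_G(\langle s^i\rangle))\geq\langle s\rangle>\langle s^i\rangle$ and $\langle s^i\rangle$ is not radical. You should supply this (or an equivalent) argument rather than treating the cyclic cases as automatic.
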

\begin{proof}
First, it is clear from the description in \cite{Luebeckthesis} of the semisimple classes of $G$ that the listed subgroups each lie in a different conjugacy class of subgroups.

On the other hand, when $\ell|(q^2-1)$, the relevant arguments from \cite[Sections 1 and 2]{An94} apply in the case $G=Sp_6(q)$ and $H=Sp_4(q)$ with $q$ even, with some minor adjustments, so the remainder of the statement can be extracted from \cite[(2D)]{An94}.  When $\ell\not|\,(q^2-1)$, a Sylow $\ell$-subgroup is cyclic, say generated by the semisimple element $s$.  Then any power $s^i$ of $s$ has the same centralizer, which can be seen from \prettyref{lem:centralizersemisimpleSp}.    Moreover, this centralizer contains a cyclic direct factor $C$ containing $\langle s\rangle$.  Hence $\langle s\rangle$ is characteristic in the centralizer of any proper, nontrivial subgroup $\langle s^i\rangle$ of the Sylow subgroup $\langle s\rangle$, so $\langle s^i\rangle$ cannot be $\ell$-radical.
\end{proof}
We note that more details regarding \prettyref{prop:radsubs} can be found in \cite{SchaefferFryThesis}.

\section{Characters of $N_G(Q)$ and the Maps}\label{sec:dzN(Q)}\label{sec:maps}
Let $G=Sp_6(q)$.  Let $Q$ be an $\ell$-radical subgroup, and write $N:=N_G(Q)$ and $C:=C_G(Q)$. In this section, we describe the characters of $N$ that will be of interest, and in particular the defect-zero characters of $N/Q$.  We then go on to describe disjoint sets $\underline{\irr_0}(G|Q)$ and $\underline{\irr_0}(N|Q)$ and bijections $\Omega_Q\colon\underline{\irr_0}(G|Q)\rightarrow \underline{\irr_0}(N|Q)$.  In \prettyref{sec:good} below, we show that in fact $\irr_0(G|Q)=\underline{\irr_0}(G|Q)$ and $\irr_0(N|Q)=\underline{\irr_0}(N|Q)$, and that these are the required maps for the reduction of the Alperin-McKay conjecture in \cite{spathAMreduction}.  In this section, we also define maps $\ast_Q\colon \ibr_\ell(G|Q)\rightarrow\dz(N_G(Q)/Q)$ in the cases $\ell|(q^2-1)$, which we show in \prettyref{sec:good} are the required maps for the reduction of the (B)AWC in \cite{SpathBAWCreduction}.  (We define the sets $\ibr_\ell(G|Q)$, $\underline{\irr_0}(G|Q)$, and $\underline{\irr_0}(N|Q)$ to be the sets of characters involved in the maps described here.) Note that for $\ell\not|(q^2-1)$, we define no maps $\ast_Q$, as in this case the Sylow $\ell$-subgroups of $G$ are cyclic, and therefore by \cite[Proposition 6.2]{SpathBAWCreduction}, $G$ is BAWC-good.  Also, in \prettyref{sec:mapsSp4}, we give similar maps for $Sp_4(q)$.

The reader will see that the sets $\ibr_\ell(G|Q)$ are very closely related to the sets $\irr_0(G|Q)$. In many cases, we have chosen $\psi\in \ibr_\ell(G|Q)$ if $\psi$ lies in a block with defect group $Q$, and in these cases the largest-degree character $\chi\in \irr(G)$ in the decomposition of $\psi$ with respect to the restrictions of ordinary characters in a basic set of the block $\mathrm{Bl}(G|\psi)$ to $\ell'$-elements satisfy $\chi\in \irr_0(G|Q)$.  Of course, when $Q$ is not a defect group for any block, which we will see occurs for some $Q$ in the case $\ell=3$, the situation is a little different.  Moreover, we remark that if we identify the semisimple class $(s)$ in $G^\ast$ with the corresponding semisimple class $(s')$ in $G\cong G^\ast$, then the sets $\underline{\irr_0}(G|Q)$ primarily contain characters of $G$ from series $\mathcal{E}(G, (s))$ with $Q\in \mathrm{Syl}_\ell(C_{G}(s'))$ (up to $G$-conjugacy), which may suggest at least an initial strategy for defining maps for more general groups. %here satisfy that for characters of $G$ which correspond by the Bonnaf{\'e}-Rouquier correspondence to a member of the principal block of the appropriate Levi subgroup, we have $\chi\in \underline{\irr_0}(G|Q)$ if and only if $\chi\in\mathcal{E}(G, (s))$ with $Q\in \mathrm{Syl}_\ell(C_{G}(s'))$ (up to $G$-conjugacy).

In most cases, the characters for $N$ here will be given by the description of an irreducible constituent of $C$.  That is, the maps we describe are not defined uniquely, but rather will be from a given set of characters of $G$ to the set of characters of $N$ with a given constituent on $C$.   In these situations, the bijection can then be chosen by specifying a map between the two subsets, with the only requirement being that the choice of image for a given family of characters is consistent throughout the choices of indices $J$.   We also remark that the fact that for all of the below maps, the number of characters of $N$ with the same constituents on $C$ matches the number of $\chi\in \irr(G)$ that we have mapped to them, will follow from the discussion of characters of $N_G(Q)$ and the Bonnaf{\'e}-Rouquier correspondence, together with the knowledge of $C_{G^\ast}(s)$ and its unipotent blocks for semisimple $s\in G^{\ast}$.  %In \prettyref{app:mapsAMBAWC}, we also provide the explicit maps $\Omega_Q$ in terms of individual characters.
The indexing sets for the $\E_i(J)$ are evident from \cite[Tabelle 19]{Luebeckthesis} - note that they match the indexing sets for the images under $\Omega_Q$ given below.  For the convenience of the reader, we have also included the list of indexing sets in the appendix.% of the ArXiv version of this paper. 

%, that is, we will suppress the notations $\theta^{(i)}$ from \prettyref{sec:dzN(Q)} for the different lifts of a character $\theta$.  Aside from this, the notation for $N_G(Q)$ will be as described in \prettyref{sec:dzN(Q)}.
%We also note that a more explicit description of these maps will be available in \cite{SchaefferFryThesis}.

Recall that for $\ell|(q^2-1)$, we have radical subgroups $Q_1, Q_2, Q_3, Q_{1,1}, Q_{2,1},$ and $Q_{1,1,1}$, with the additional subgroups $Q^{(3)}, P,$ and $R$ when $\ell=3$.  So, when referring to $Q_1, Q_2, Q_3, Q_{1,1}, Q_{2,1},$ and $Q_{1,1,1}$ we will assume $\ell|(q^2-1)$, without necessarily assuming that $\ell\neq 3$, unless otherwise stated.  When referring to $P, R$, we assume $\ell=3$, when referring to $Q^{(3)}$, we assume $\ell|(q^4+q^2+1)$, with the possibility that $\ell=3$, and when referring to $Q^{(2)}$, we assume $\ell|(q^2+1)$.  Throughout this section, let $\epsilon\in\{\pm1\}$ such that $\ell|(q^3-\epsilon)$, if such an $\epsilon$ exists.

The characters of $N$ that we are interested in are those which are defect-zero characters of $N/Q$ or height-zero characters of $N$ with defect group $Q$.  In either case, these characters will be $\chi\in\irr(N)$ with $\chi(1)_{\ell}=|N/Q|_{\ell}$.  So if $Q=Q_1,Q_2,$ or $Q_3$, we have $\chi(1)_\ell=\ell^{2d}$, except in the case $Q_3$ when $\ell=3$, in which case $\chi(1)_3=3^{2d+1}$.  If $Q=Q_{1,1}$ or $Q_{2,1}$, then $\chi(1)_\ell=\ell^d$, and if $Q$ is a Sylow subgroup, $\chi(1)_\ell=1$.  If $Q=Q_{1,1,1}, Q^{(3)}$ or $R$ when $\ell=3|(q^2-1)$, then $\chi(1)_{3}=3$.  In most cases, it will suffice for our purposes to describe the constituents of $\chi$ when restricted to $C$, and to keep in mind the action of $N/C$ on $C$ and its characters.

In many of the groups we are concerned with, we have an extension of a subgroup by $C_2$.  Suppose that $X=Y:2$, with the order-two automorphism on $Y$ denoted by $\tau$.  By Clifford theory, a character $\chi\in\irr(X)$ satisfies $\chi|_Y=\theta+\theta^\tau$ if an irreducible constituent $\theta$ of $\chi|_Y$ is not invariant under the automorphism $\tau$, and in this case, $\chi=\theta^X=(\theta^\tau)^X$.  Since $X/Y$ is cyclic, if a constituent $\theta$ is invariant under $\tau$, then $\chi|_Y=\theta$.  In this case, Gallagher's theorem tells us that there are two such characters $\chi$, namely $\chi$ and $\chi\lambda$ where $\lambda$ is the nonprincipal character of $X/Y\cong C_2$.  In particular, $\chi\in\irr(X)$ has degree $\chi(1)=2\theta(1)$ or $\theta(1)$ for some $\theta\in \irr(Y)$.  In general, when a character $\theta$ of $Y\lhd X$ extends to $X$, we will sometimes write $\theta^{(\nu)}$ for the character $\theta\nu$ of $X$ with $\nu\in X/Y$ by Gallagher's theorem.

We note that from the discussions below for $N_G(Q)$, it will also be easy to see the characters of interest for $N_H(Q)$ with $H=Sp_4(q)$ by similar arguments.

\subsection{Characters of Some Relevant Subgroups}\label{sec:somecharsofsubgps}

From \prettyref{sec:radsubs}, we see that when $\ell|(q^4-1)$, the characters of the groups $GL_{r}^\epsilon(q):2$, for $r=1,2,3$, $Sp_4(q)$, and $SL_2(q)=Sp_2(q)$ will play a large role for many of the radical subgroups, so we discuss the characters of these groups here.  Recall that the $C_2$ extension of $GL_r^\epsilon(q)$ considered here acts on $GL_r^\epsilon(q)$ via $\tau_r^{(\epsilon)}\colon A\mapsto \tw{T}A^{-1}$.  We simply write $\tau:=\tau_r^{(\epsilon)}$ for this graph automorphism when $r, \epsilon$ are understood.

First let $\ell|(q-\epsilon)$ for $\epsilon\in\{\pm1\}$.  Let $\varphi_i\in \irr(C_{q-\epsilon})=\irr(GL_1^\epsilon(q))$ denote the linear character which maps $\tilde{\zeta}\mapsto \zeta^{i}$, where $\zeta, \tilde{\zeta}$ are $\zeta_1, \tilde{\zeta}_1$ or $\xi_1, \tilde{\xi}_1$, in the cases $\epsilon=1$ and $-1$, respectively. Then $\varphi_i^\tau=\varphi_{-i},$ so $\varphi_i$ is invariant under $\tau$ exactly when $(q-\epsilon)|i$, i.e., when $\varphi_i=1$.  Hence an irreducible character of $GL_1^\epsilon(q):2$ which is nontrivial on $GL_1^\epsilon(q)$ can be identified by a constituent of its restriction to $GL_1^\epsilon(q)$, and therefore can be labeled by $\widetilde{\varphi_i}:=\ind_{GL_1^\epsilon(q)}^{GL_1^\epsilon(q):2}\varphi_i$ for $i\in I_{q-\epsilon}$, where $I_{q-\epsilon}$ is as defined after \eqref{eq:I0q}.  (Note that this yields $|I_{q-\epsilon}|=\frac{q-\epsilon-1}{2}$ characters of $GL_1^\epsilon(q):2$ of this form.)  Moreover, there are two characters $1^{(1)}, 1^{(-1)}$ of $GL_1^\epsilon(q):2$ which are trivial on $GL_1^\epsilon(q)$, corresponding to the two characters $\{\pm1\}$ of $C_2$, by Gallagher's theorem.

Writing $GL^\epsilon_2(q)\cong C_{q-\epsilon}\times SL_2(q)$, we note that $\tau$ induces an inner automorphism of $SL_2(q)$, so fixes all characters of $SL_2(q)$, and the action of $\tau$ on $C_{q-\epsilon}$ is the same as in the preceding paragraph.  So, we will write $\varphi=(\varphi_i\times \psi)$ for the character of $GL^\epsilon_2(q)\cong C_{q-\epsilon}\times SL_2(q)$, with $\varphi_i$ as above, and $\psi\in \irr(SL_2(q))$.  The characters of $GL^\epsilon_2(q):2$ whose restriction to $C_{q-\epsilon}$ is nontrivial will then be denoted $\widetilde{(\varphi_i\times \psi)}:=\ind_{GL^\epsilon_2(q)}^{GL^\epsilon_2(q):2}(\varphi_i\times\psi)$ for $i\in I_{q-\epsilon}$ and those whose restriction to $C_{q-\epsilon}$ is trivial will be denoted $(1\times\psi)^{(1)}, (1\times\psi)^{(-1)}$.   Now, the only series of characters of $SL_2(q)=Sp_2(q)$ with degree divisible by $\ell$ is $\chi_4(j)$ when $\epsilon =1$ and $\chi_3(j)$ when $\epsilon=-1$, with degrees $q-\epsilon$ and indexing $j\in I_{q+\epsilon}$ (see, for example, the character table information in CHEVIE \cite{chevie}).  To ease notation and emphasize the degree of this character, we will write $\chi_{q-\epsilon}(j)$ to denote the character $\chi_4(j)$ or $\chi_3(j)$ of $SL_2(q)$ in the case $\epsilon=1$ or $-1$, respectively.  (Also, when $\ell|(q^2+1)$, note that no character of $SL_2(q)$ has degree divisible by $\ell$.)

Now consider $GL_3^\epsilon(q):2$.   The characters $\chi_8(i)$ of $GL_3^\epsilon(q)$ (in the notation of CHEVIE \cite{chevie}), indexed by $1\leq i\leq q^3-\epsilon$ with $(q^2+\epsilon q+1)\not|\,i$ and $\chi_8(i)=\chi_8(qi)=\chi_8(q^2i)$, each have degree $(q-\epsilon)^2(q+\epsilon)$ and are the only characters of $GL_3^\epsilon(q)$ of degree divisible by $\ell^{2d}$ when $\ell|(q^2-1)$.  Inspection of the character table in CHEVIE reveals that $\chi_8(i)^\tau=\chi_8(-i)$ and no character in this series is invariant under $\tau$.  So, the characters we will be concerned with for this group are of the form $\chi_8(i)+\chi_8(-i)$ on $GL^\epsilon_3(q)$, and we will write $\widetilde{\chi_8(i)}:=\ind_{GL^\epsilon_3(q)}^{GL_3^\epsilon(q):2}\chi_8(i)$ for the corresponding character of $\irr\left((GL_3^\epsilon(q):2)|\chi_8(i)\right)$, indexed by $i\in I_{q^3-\epsilon}$, where $I_{q^3-\epsilon}$ is as defined after \eqref{eq:I0q3}.

Finally, when $\ell|(q-\epsilon)$, the irreducible characters $\theta$ of $Sp_4(q)$ with $\theta(1)_\ell=\ell^{2d}$ are those in the families (in the notation of CHEVIE) $\chi_5, \chi_{18}(i), \chi_{19}(i,j)$ when $\epsilon=1$ and $\chi_2, \chi_{15}(i,j), \chi_{18}(i)$ when $\epsilon=-1$.  We note that $\chi_2$ and $\chi_5$ are the Weil characters $\rho_2^2$ and $\alpha_2$, respectively, in the notation of \cite{TiepGuralnick04}.  Also, note that the indexing for the families $\chi_{15}(i,j)$ is $(i,j)\in I_{q-1}^{2*}$, % with $i\neq\pm j$ and $\chi_{15}(i,j)=\chi_{15}(j,i)$;
for $\chi_{18}(i)$ is $i\in I_{q^2+1}$, and for $\chi_{19}(i,j)$ is $i,j\in I_{q+1}^{2*}$.  % with $i\neq\pm j$ and $\chi_{19}(i,j)=\chi_{19}(j,i)$.  (Often, in such situations, we will write $(i,j)\sim (j,i)$ to mean that the character is invariant under rearranging the indices.)

When $Q\in \mathrm{Syl}_\ell(G)$, all characters of $N_G(Q)$ have defect group $Q$ (see for example \cite[Corollary (15.39)]{isaacs}), since $Q$ is an $\ell-$radical subgroup and $Q\in \mathrm{Syl}_\ell(N_G(Q))$.  Hence in this case, $\irr_0(N_G(Q)|Q)=\irr_{\ell'}(N_G(Q))$.

\subsection{$Q=Q_1$}

Let $Q:=Q_1$ with $\ell|(q-\epsilon)$, so that $N=\left(GL_1^\epsilon(q):2\right)\times Sp_4(q)$ and $C=GL_1^\epsilon(q)\times Sp_4(q)$ from \prettyref{sec:radsubs}.  Let $\chi\in\irr(N)$ with $\chi(1)_\ell=\ell^{2d}$.  If $\chi$ is nontrivial on $GL_1^\epsilon(q)$, Clifford theory and the discussion in \prettyref{sec:somecharsofsubgps} yield that $\chi$ is uniquely determined by a constituent $(\varphi_i\times \psi)$ of $\chi|_C$, where $i\in I_{q-\epsilon}$ and $\psi\in \irr(Sp_4(q))$ is one of the characters described in the fifth paragraph of \prettyref{sec:somecharsofsubgps}.  Hence $\chi=(\widetilde{\varphi_i}\times \psi)$, where $\widetilde{\varphi_i}\in\irr(GL_1^\epsilon(q):2)$ is as defined in \prettyref{sec:somecharsofsubgps}. If $\chi$ is trivial on $GL_1^\epsilon(q)$ then $\chi|_C=(1\times\psi)$ (with $\psi$ again as above) is irreducible, %since $N/C=C_2$ is cyclic,
and there are two choices $(1^{(1)}\times\psi)$ and $(1^{(-1)}\times\psi)$ for $\chi$ for each such choice of $\psi$.

Now, to be a character of $N/Q$, we require that $Q$ be in the kernel.  As $Q=\textbf{O}_\ell(GL_1^\epsilon(q))$, we see that this means $\dz(N/Q)$ is comprised of the two characters $(1^{(1)}\times\psi)$ and $(1^{(-1)}\times\psi)$ of $\irr(N|\left(1\times \psi\right))$ for each $\psi\in\irr(Sp_4(q))$ as above, together with the characters $(\widetilde{\varphi_i}\times \psi)$ of $\irr(N|(\varphi_i\times \psi))$, where $\psi$ is as above and $i\in I_{q-\epsilon}$ satisfies the additional condition $\ell^d|i$.

The maps $\Omega_Q\colon\underline{\irr_0}(G|Q)\rightarrow \underline{\irr_0}(N|Q)$ and $\ast_Q\colon\ibr_\ell(G|Q)\rightarrow\dz(N/Q)$ for this radical subgroup are below.  (Recall that we define $\underline{\irr_0}(G|Q)$ to be the characters of $G$ involved in our map $\Omega_Q$ and $\ibr_\ell(G|Q)$ to be the set of Brauer characters involved in our map $\ast_Q$.)
\scriptsize
\begin{align*}
 \Omega_{Q_1}\colon & \scriptsize\left\{\begin{array}{cc}
                      \{\chi_5,\chi_{11}\}\rightarrow \irr(N|(1\times\alpha_2)) & \epsilon=1 \\
                      \{\chi_4,\chi_{9}\}\rightarrow \irr(N|(1\times\rho_2^2)) & \epsilon=-1
                    \end{array}\right.\\
&\scriptsize \left\{\begin{array}{cc}
\chi_{17}(i)\mapsto \irr(N|(\varphi_i\times\alpha_2)) & \epsilon=1\\
\chi_{20}(i)\mapsto \irr(N|(\varphi_i\times \rho_2^2)) & \epsilon=-1
\end{array}\right.\quad i\in I_{q-\epsilon}\\
& \scriptsize\left\{\begin{array}{cc}
\E_{23}(i,j)\rightarrow \irr(N|(1\times\chi_{19}(i,j))) & \epsilon=1\\
\E_{17}(i,j)\rightarrow \irr(N|(1\times \chi_{15}(i,j))) & \epsilon=-1 \end{array}\right.\quad (i,j)\in I_{q+\epsilon}^{2*}\\
&\scriptsize\E_{24}(i)\rightarrow \irr(N|(1\times\chi_{18}(i))) \quad\hbox{(for $\epsilon=1$ or $-1$)}, \quad i\in I_{q^2+1}\\
&\scriptsize\left\{\begin{array}{cc}
\E_{28}(i,j,k)\rightarrow \irr(N|(\varphi_i\times\chi_{19}(j,k))) & \epsilon=1\\
\E_{26}(j,k,i)\rightarrow \irr(N|(\varphi_i\times\chi_{15}(j,k)))& \epsilon=-1\end{array}\right.\quad i\in I_{q-\epsilon}, (j,k)\in I_{q+\epsilon}^{2*}\\
&\scriptsize\left\{\begin{array}{ccc}
\E_{30}(i,j)&\multirow{2}{*}{$\rightarrow \irr(N|(\varphi_i\times\chi_{18}(j)))$} & \epsilon=1\\
\E_{33}(j,i) && \epsilon=-1\end{array}\right.\quad i\in I_{q-\epsilon}, j\in I_{q^2+1}.
\end{align*}

\scriptsize
\begin{align*}
\ast_{Q_1}\colon &\scriptsize \left\{\begin{array}{ccc}
                      \multirow{2}{*}{$\mathfrak{B}_1\rightarrow$} &\irr\left(N|(1\times\alpha_2)\right) & \epsilon=1 \\
                      &\irr\left(N|(1\times\rho_2^2)\right) & \epsilon=-1
                    \end{array}\right.\\
&\scriptsize \left\{\begin{array}{cc}
\mathfrak{B}_6(i)^{(1)}\rightarrow \irr\left(N|(\varphi_i\times\alpha_2)\right) & \epsilon=1\\
\mathfrak{B}_7(i)^{(1)}\rightarrow \irr\left(N|(\varphi_i\times \rho_2^2)\right) & \epsilon=-1
\end{array}\right.\quad i\in I_{q-\epsilon}, \ell^d|i\\
&\scriptsize \left\{\begin{array}{cc}
\mathfrak{B}_{23}(i,j)\rightarrow \irr\left(N|(1\times\chi_{19}(i,j))\right) & \epsilon=1\\
\mathfrak{B}_{17}(i,j)\rightarrow \irr\left(N|(1\times \chi_{15}(i,j))\right) & \epsilon=-1 \end{array}\right.\quad (i, j)\in I_{q+\epsilon}^{2*}\\
&\scriptsize \mathfrak{B}_{24}(i)\rightarrow \irr\left(N|(1\times\chi_{18}(i))\right) \quad\hbox{(for $\epsilon=1$ or $-1$)}\quad i\in I_{q^2+1}\\
&\scriptsize\left\{\begin{array}{cc}
\mathfrak{B}_{28}(i,j,k)\rightarrow \irr\left(N|(\varphi_i\times\chi_{19}(j,k))\right) & \epsilon=1\\
\mathfrak{B}_{26}(j,k,i))\rightarrow \irr\left(N|(\varphi_i\times\chi_{15}(j,k))\right)& \epsilon=-1\end{array}\right.\quad i\in I_{q-\epsilon}, \ell^d|i; \quad(j, k)\in I_{q+\epsilon}^{2*}\\
&\scriptsize\left\{\begin{array}{ccc}
\mathfrak{B}_{30}(i,j)&\multirow{2}{*}{$\rightarrow \irr\left(N|(\varphi_i\times\chi_{18}(j))\right)$} & \epsilon=1\\
\mathfrak{B}_{33}(j,i) && \epsilon=-1\end{array}\right.\quad i\in I_{q-\epsilon}, \ell^d|i; \quad j\in I_{q^2+1}.
\end{align*}
\normalsize

\subsection{$Q=Q_2$}

Let $Q=Q_2$, so $N=\left(GL_2^\epsilon(q):2\right)\times Sp_2(q)$ and $C=GL_2^\epsilon(q)\times Sp_2(q)$.  Recall that we view $GL_2^\epsilon(q)$ as $C_{q-\epsilon}\times SL_2(q)$.  When restricted to $C$, the characters of $N$ satisfying $\chi(1)_\ell=\ell^{2d}$ have constituents of the form $\left((\varphi_i\times \chi_{q-\epsilon}(j))\times \chi_{q-\epsilon}(k)\right)$, with $\varphi_i\in\irr(C_{q-\epsilon})$ and $\chi_{q-\epsilon}\in\irr(SL_2(q))=\irr(Sp_2(q))$ as in \prettyref{sec:somecharsofsubgps}.  If $\chi$ is nontrivial on $C_{q-\epsilon}$, then again it is uniquely determined by a choice of $i\in I_{q-\epsilon}$ and $(j,k)\in I_{q+\epsilon}\times I_{q+\epsilon}$, so that $\chi=\left(\widetilde{(\varphi_i\times \chi_{q-\epsilon}(j))}\times\chi_{q-\epsilon}(k)\right)$.  %  (We note that here $j$ can equal $k$, and that the choice $(j,k)$ yields a different character than the choice $(k,j)$.)
If $\chi$ is trivial on $C_{q-\epsilon}$, then $\chi|_C$ is irreducible and there are again two choices of $\chi$ with $\chi|_C=\left((1\times \chi_{q-\epsilon}(j))\times\chi_{q-\epsilon}(k)\right)$ for each $(j,k)\in I_{q+\epsilon}\times I_{q+\epsilon}$, namely $\left((1\times \chi_{q-\epsilon}(j))^{(1)}\times\chi_{q-\epsilon}(k)\right)$ and $\left((1\times \chi_{q-\epsilon}(j))^{(-1)}\times\chi_{q-\epsilon}(k)\right)$.

Now, to be a character of $N/Q$, we require that $Q$ be in the kernel.  As $Q=\textbf{O}_\ell(C_{q-\epsilon})$, we see that this means $\dz(N/Q)$ is comprised of the two characters $\left((1\times \chi_{q-\epsilon}(j))^{(1)}\times\chi_{q-\epsilon}(k)\right)$ and $\left((1\times \chi_{q-\epsilon}(j))^{(-1)}\times\chi_{q-\epsilon}(k)\right)$ of $\irr\left(N|\left((1\times \chi_{q-\epsilon}(j)),\chi_{q-\epsilon}(k)\right)\right)$ for each $(j,k)\in I_{q+\epsilon}\times I_{q+\epsilon}$, together with the character $\left(\widetilde{(\varphi_i\times \chi_{q-\epsilon}(j))}\times\chi_{q-\epsilon}(k)\right)$, for each $i\in I_{q-\epsilon}$ satisfying $\ell^d|i$ and $(j,k)\in I_{q+\epsilon}\times I_{q+\epsilon}$.

\begin{comment}
That is,
\[\dz(N/Q)=\{(1^{(1)}, \chi_{q-\epsilon}(j), \chi_{q-\epsilon}(k)),(1^{(2)}, \chi_{q-\epsilon}(j), \chi_{q-\epsilon}(k)), (\varphi_i,\chi_{q-\epsilon}(j), \chi_{q-\epsilon}(k))| i\in I_{q-\epsilon}; j,k\in I_{q+\epsilon}; \ell^d|i\}.\]
\end{comment}

The maps for $Q=Q_2$ are as follows:

\scriptsize
\begin{align*}
\Omega_{Q_2}\colon &\scriptsize \left\{\begin{array}{ccc}
    \E_{9}(i)\setminus\{\chi_{29}(i)\}&\multirow{2}{*}{$\rightarrow \irr\left(N|(1\times\chi_{q-\epsilon}(2i)\times\chi_{q-\epsilon}(i))\right)$} & \epsilon=1\\
    \E_{8}(i)\setminus\{\chi_{26}(i)\} && \epsilon=-1
        \end{array}\right.\quad i\in I_{q+\epsilon}\\
&\scriptsize \left\{\begin{array}{ccc}
    \E_{22}(i,j)&\multirow{2}{*}{$\rightarrow \irr\left(N|(1\times\chi_{q-\epsilon}(2i)\times \chi_{q-\epsilon}(j))\right)$} & \epsilon=1\\
    \E_{16}(i,j) && \epsilon=-1
        \end{array}\right.\quad (i,j)\in I_{q+\epsilon}^2\\
& \scriptsize\left\{\begin{array}{ccc}
    \E_{29}(i,j)&\multirow{2}{*}{$\rightarrow \irr\left(N|(\varphi_{2i_1}\times\chi_{q-\epsilon}(2i_2)\times\chi_{q-\epsilon}(j))\right)$} & \epsilon=1\\
    \E_{27}(i,j) && \epsilon=-1
        \end{array}\right.\quad i=i_1(q+\epsilon)+i_2(q-\epsilon)\in I_{q^2-1}, j\in I_{q+\epsilon}\\
\end{align*}
\normalsize

\scriptsize
\begin{align*}
\ast_{Q_2}\colon & \scriptsize\left\{\begin{array}{ccc}
    \mathfrak{B}_{9}(i)^{(0)}&\multirow{2}{*}{$\rightarrow \irr\left(N|(1\times\chi_{q-\epsilon}(2i)\times \chi_{q-\epsilon}(i))\right)$} & \epsilon=1\\
    \mathfrak{B}_{8}(i)^{(0)} && \epsilon=-1
        \end{array}\right.\quad i\in I_{q+\epsilon}\\
&\scriptsize \left\{\begin{array}{ccc}
    \mathfrak{B}_{22}(i,j)&\multirow{2}{*}{$\rightarrow \irr\left(N|(1\times\chi_{q-\epsilon}(2i)\times \chi_{q-\epsilon}(j))\right)$} & \epsilon=1\\
    \mathfrak{B}_{16}(i,j) && \epsilon=-1
        \end{array}\right.\quad (i, j)\in I_{q+\epsilon}^2\\
& \scriptsize\left\{\begin{array}{ccc}
    \mathfrak{B}_{29}(i,j)&\multirow{2}{*}{$\rightarrow \irr\left(N|(\varphi_{2i_1}\times\chi_{q-\epsilon}(2i_2)\times \chi_{q-\epsilon}(j))\right)$} & \epsilon=1\\
    \mathfrak{B}_{27}(i,j) && \epsilon=-1
        \end{array}\right.\quad i=i_1(q+\epsilon)+i_2(q-\epsilon)\in I_{q^2-1}, \ell^d|i; \quad j\in I_{q+\epsilon}\\
\end{align*}
\normalsize
\subsection{$Q=Q_3$}

Let $Q=Q_3$ with $\ell|(q-\epsilon)$.  Then $N=GL^{\epsilon}_3(q):2=C:2$.
Using the character table in CHEVIE \cite{chevie}, we see that when $\ell=3$, there are no characters for $GL^\epsilon_3(q)$ (and hence there are no such characters for $N$) with degree $3^{2d+1}$.  Hence $\dz(N/Q)$ and $\irr_0(N|Q)$ are empty in this case.

For the remainder of our discussion of $Q=Q_3$, we assume $\ell\neq 3$ and $\chi\in\irr(N)$ with $\chi(1)_\ell=\ell^{2d}$.  Then $\chi=\widetilde{\chi_8(i)}$ with $i\in I_{q^3-\epsilon}$.  To be a character of $N/Q$, $\chi$ must be trivial on $Q$, which under our identification is the subgroup $\textbf{O}_\ell(Z(C))$, which consists of representatives of the conjugacy classes $C_1(k)$ for $m|k$ of $GL^\epsilon_3(q)$ in the notation of CHEVIE.  Now, on the class $C_1(k)$ of $GL_3(q)$, the character $\chi_8(i)$ takes the value $(q-1)^2(q+1)\zeta_1^{ik}$, and on the class $C_1(k)$ of $GU_3(q)$, $\chi_8(i)$ takes the value $(q+1)^2(q-1)\xi_1^{ik}$.  %(Recall that $\zeta_1$ and $\xi_1$ are the $(q-1)$st and $(q+1)$st roots of unity $\exp\left(\frac{2\pi\sqrt{1}}{q-1}\right)$ and $\exp\left(\frac{2\pi\sqrt{1}}{q+1}\right)$, respectively.)
Hence we see that $Q$ is in the kernel of $\chi_8(i)$ exactly when $\ell^d|i$.  So $\dz(N/Q)$ is comprised of the $\frac{q(q+\epsilon)m}{6}$ characters $\widetilde{\chi_8(i)}$ satisfying $i\in I_{q^3-\epsilon}$ with $\ell^d|i$ (i.e., the member of $\irr(N|\chi_8(i))$).

The maps for $Q=Q_3$ are as follows:

\noindent\begin{minipage}[b]{0.45\linewidth}
\scriptsize
\[
\begin{array}{c}\Omega_{Q_3} \\
(\ell\neq 3)\end{array}\colon\scriptsize \left\{\begin{array}{ccc}
    \E_{31}(i)&\multirow{2}{*}{\hspace{-.2cm}$\rightarrow \irr\left(N|\chi_8(i)\right)$} & \hspace{-.1cm}\epsilon=1\\
    \E_{34}(i) && \hspace{-.1cm}\epsilon=-1
        \end{array}\right.\hbox{   } i\in I_{q^3-\epsilon}
\]
\end{minipage}
\hspace{0.6cm}
\begin{minipage}[b]{0.45\linewidth}
\scriptsize
\[
\begin{array}{c}\ast_{Q_3} \\
(\ell\neq 3)\end{array}\colon \scriptsize\left\{\begin{array}{ccc}
    \mathfrak{B}_{31}(i)&\multirow{2}{*}{\hspace{-.2cm}$\rightarrow \irr\left(N|\chi_8(i)\right)$} & \hspace{-.1cm}\epsilon=1\\
    \mathfrak{B}_{34}(i) && \hspace{-.1cm}\epsilon=-1
        \end{array}\right.\hbox{   } i\in I_{q^3-\epsilon}, \ell^d|i
\]
\end{minipage}
\normalsize

%
% We can view $PGL^\pm_3(q)=C/Z(C)$ as a quotient of $C/Q$ by $C/Z(C)\cong \left(C/Q\right)/\left(Z(C)/Q\right)$.

%\vskip 10pt
\subsection{$Q=Q_{1,1}$}
Let $Q=Q_{1,1}$ with $\ell|(q-\epsilon)$.  Then $N=(GL_1^\epsilon(q):2)\wr S_2\times Sp_2(q)$ and $C=GL_1^\epsilon(q)\times GL_1^\epsilon(q)\times Sp_2(q)$ from \prettyref{sec:radsubs}.
Let $L:=\left(GL_1^\epsilon(q):2\right)\times \left(GL_1^\epsilon(q):2\right)\times Sp_2(q)$ and let $\varrho$ denote the action of $S_2$ on $L$, which fixes $Sp_2(q)$ and switches the two copies of $GL_1^\epsilon(q):2=C_{q-\epsilon}:2$. Note that the character $(\varphi\times \varphi'\times \theta)\in\irr(L)=\irr(C_{q-\epsilon}:2)\times \irr(C_{q-\epsilon}:2)\times \irr(Sp_2(q))$ is invariant under $\varrho$ if and only if $\varphi=\varphi'$.  Hence the characters $\chi$ of $N$ with $\chi(1)_\ell=\ell^d$ can be described as follows.

There is a unique such character of $N$ whose restriction to $C$ contains the constituent $(\varphi_i\times\varphi_j\times \chi_{q-\epsilon}(k))$ for $i\neq j\in I_{q-\epsilon}, k\in I_{q+\epsilon}$ (where $\varphi_i\in\irr(GL_1^\epsilon(q))$ is as before), namely $\ind_C^N(\varphi_i\times\varphi_j\times \chi_{q-\epsilon}(k))$.  There are two characters of $N$ with $\chi(1)_\ell=\ell^d$ whose restriction to $C$ contains the constituent $(\varphi_i\times\varphi_i\times \chi_{q-\epsilon}(k))$ or $(\varphi_i\times1\times \chi_{q-\epsilon}(k))$ for $i\in I_{q-\epsilon}, k\in I_{q+\epsilon}$.  Namely, $\irr(N|(\varphi_i\times\varphi_i\times \chi_{q-\epsilon}(k)))$ is comprised of the two extensions to $N$ of the character $(\widetilde{\varphi_i}\times\widetilde{\varphi_i}\times \chi_{q-\epsilon}(k))\in\irr(L)$, which we will denote by $(\widetilde{\varphi_i}\times\widetilde{\varphi_i}\times \chi_{q-\epsilon}(k))^{(1)}$ and $(\widetilde{\varphi_i}\times\widetilde{\varphi_i}\times \chi_{q-\epsilon}(k))^{(-1)}$.  (Recall the notation $\widetilde{\varphi_i}\in\irr(GL_1^\epsilon(q):2)$ from \prettyref{sec:somecharsofsubgps}.)  The set $\irr\left(N|(\varphi_i\times 1\times \chi_{q-\epsilon}(k))\right)$ consists of $\ind_L^N(\widetilde{\varphi_i}\times1^{(1)}\times \chi_{q-\epsilon}(k))$ and $\ind_L^N(\widetilde{\varphi_i}\times1^{(-1)}\times \chi_{q-\epsilon}(k))$.  Finally, there are five such characters of $N$ which have constituent $(1\times1\times\chi_{q-\epsilon}(k))$ on $C$ for each $k\in I_{q+\epsilon}$, so are trivial on $GL_1^\epsilon(q)\times GL_1^\epsilon(q)$.  (These correspond to the five characters of $C_2\wr S_2$, which we will later write as $(1^{(1)}\times 1^{(-1)}), (1^{(1)}\times 1^{(1)})^{(\lambda)}$, and $(1^{(-1)}\times 1^{(-1)})^{(\lambda)}$, where $\lambda\in\{\pm1\}=\irr(C_2)$.)

Since $Q=\textbf{O}_\ell\left((GL_1^\epsilon(q))^2\right)$, to be trivial on $Q$, the characters as listed above must satisfy that in addition, $\ell^d|i$ for all of the $\varphi_i, i\in I_{q-\epsilon}$ occurring in the restriction to $C$ as in the preceding paragraph.  This means that $\dz(N/Q)$ is comprised of the five members of $\irr\left(N|(1\times1\times\chi_{q-\epsilon}(k))\right)$ for each $k\in I_{q+\epsilon}$, the two members of $\irr\left(N|(\varphi_i\times\varphi_i\times \chi_{q-\epsilon}(k))\right)$ for each $i\in I_{q-\epsilon}, k\in I_{q+\epsilon}$ satisfying the additional property $\ell^d|i$, the two members of $\irr\left(N|(\varphi_i\times1\times \chi_{q-\epsilon}(k))\right)$ for each $i\in I_{q-\epsilon}, k\in I_{q+\epsilon}$ satisfying the additional property $\ell^d|i$, and the unique member of $\irr\left(N|(\varphi_i\times\varphi_j\times \chi_{q-\epsilon}(k))\right)$ for each $i\neq j\in I_{q-\epsilon}, k\in I_{q+\epsilon}$ satisfying the additional property $\ell^d|i,j$.

The maps for this radical subgroup are as follows:

\scriptsize
\begin{align*}
\Omega_{Q_{1,1}}\colon & \scriptsize\left\{\begin{array}{ccc}
    \E_{7}(i)\setminus\{\chi_{23}(i)\}&\multirow{2}{*}{$\rightarrow \irr\left(N|(1\times1\times\chi_{q-\epsilon}(i))\right)$} & \epsilon=1\\
    \E_{6}(i)\setminus\{\chi_{14}(i)\} && \epsilon=-1
        \end{array}\right.\quad i\in I_{q+\epsilon}\\
& \scriptsize\left\{\begin{array}{ccc}
    \E_{20}(i,j)&\multirow{2}{*}{$\rightarrow \irr\left(N|(\varphi_i\times1\times\chi_{q-\epsilon}(j))\right)$} & \epsilon=1\\
    \E_{20}(j,i) && \epsilon=-1
        \end{array}\right.\quad i\in I_{q-\epsilon}, j\in I_{q+\epsilon}\\
& \scriptsize\left\{\begin{array}{ccc}
    \E_{18}(i,j)&\multirow{2}{*}{$\rightarrow \irr\left(N|(\varphi_i\times\varphi_i\times\chi_{q-\epsilon}(j))\right)$} & \epsilon=1\\
    \E_{21}(i,j) && \epsilon=-1
        \end{array}\right.\quad i\in I_{q-\epsilon}, j\in I_{q+\epsilon}\\
&\scriptsize \left\{\begin{array}{ccc}
    \E_{26}(i,j,k)&\multirow{2}{*}{$\rightarrow \irr\left(N|(\varphi_i\times\varphi_j\times\chi_{q-\epsilon}(k))\right)$} & \epsilon=1\\
    \E_{28}(k,i,j) && \epsilon=-1
        \end{array}\right.\quad (i, j)\in I_{q-\epsilon}^{2*}, k\in I_{q+\epsilon}\\
\end{align*}
\normalsize
\scriptsize
\begin{align*}
\ast_{Q_{1,1}}\colon & \scriptsize\left\{\begin{array}{ccc}
    \mathfrak{B}_{7}(i)^{(0)}&\multirow{2}{*}{$\rightarrow \irr\left(N|(1\times1\times\chi_{q-\epsilon}(i))\right)$} & \epsilon=1\\
    \mathfrak{B}_{6}(i)^{(0)} && \epsilon=-1
        \end{array}\right.\quad i\in I_{q+\epsilon}\\
&\scriptsize \left\{\begin{array}{ccc}
    \mathfrak{B}_{20}(i,j)&\multirow{2}{*}{$\rightarrow \irr\left(N|(\varphi_i\times1\times\chi_{q-\epsilon}(j))\right)$} & \epsilon=1\\
    \mathfrak{B}_{20}(j,i) && \epsilon=-1
        \end{array}\right.\quad i\in I_{q-\epsilon}, \ell^d|i; \quad j\in I_{q+\epsilon}\\
&\scriptsize \left\{\begin{array}{ccc}
    \mathfrak{B}_{18}(i,j)&\multirow{2}{*}{$\rightarrow \irr\left(N|(\varphi_i\times\varphi_i\times\chi_{q-\epsilon}(j))\right)$} & \epsilon=1\\
    \mathfrak{B}_{21}(i,j) && \epsilon=-1
        \end{array}\right.\quad i\in I_{q-\epsilon}, \ell^d|i; \quad j\in I_{q+\epsilon}\\
&\scriptsize \left\{\begin{array}{ccc}
    \mathfrak{B}_{26}(i,j,k)&\multirow{2}{*}{$\rightarrow \irr\left(N|(\varphi_i\times\varphi_j\times\chi_{q-\epsilon}(k))\right)$} & \epsilon=1\\
    \mathfrak{B}_{28}(k,i,j) && \epsilon=-1
        \end{array}\right. \quad (i, j)\in I_{q-\epsilon}^{2*}, \ell^d|i,j; \quad k\in I_{q+\epsilon}\\
\end{align*}
\normalsize

%\vskip 10pt
\subsection{$Q=Q_{2,1}$}

When $Q=Q_{2,1}$, we have $N=\left(GL_2^\epsilon(q):2\right)\times \left(GL_1^\epsilon(q):2\right)$ and $C=GL_2^\epsilon(q)\times GL_1^\epsilon(q)$, which we identify with $GL_1^\epsilon(q)\times SL_2(q)\times GL_1^\epsilon(q)$.
\begin{comment}, where with respect to the basis $\{e_1,e_2,f_1,f_2,e_3,f_3\}$, the identification of $GL_2^\epsilon(q):2$ in $G$ is the same as in the case $Q=Q_2$ and the $C_{q-\epsilon}:2$ is as in the case $Q=Q_1$ but now $C_{q-\epsilon}:2=\langle \diag(I_4, a,a^{-1})\rangle\rtimes\left\langle\left(
                                                                    \begin{array}{ccc}
                                                                      I_4 & & \\
                                                                       &0 & 1\\
                                                                      &1 & 0 \\
                                                                    \end{array}
                                                                  \right)\right\rangle$.
\end{comment}
Here characters of $N/Q$ of defect zero or height-zero characters of $N$ with defect group $Q$ satisfy $\chi(1)_{\ell}=\ell^d$.  Hence $\chi$ must be of the form $\left(\varphi\times \varphi'\right)$ where $\varphi\in\irr(GL_2^\epsilon(q):2)$ is as in the case $Q=Q_2$ and $\varphi'$ is any member of $\irr(C_{q-\epsilon}:2)$ as described in \prettyref{sec:somecharsofsubgps}.  That is, these characters include the unique member, $\left(\widetilde{(\varphi_i\times \chi_{q-\epsilon}(j))}\times \widetilde{\varphi_k}\right)$, of $\irr(N)$ lying above $(\varphi_i\times\chi_{q-\epsilon}(j)\times\varphi_k)\in\irr(C)$ for each $i,k\in I_{q-\epsilon}, j\in I_{q+\epsilon}$ (with the possibility $i=k$), the two members of $\irr(N)$ lying above $(\varphi_i\times\chi_{q-\epsilon}(j)\times 1)\in\irr(C)$ for each $i\in I_{q-\epsilon}, j\in I_{q+\epsilon}$ (namely, $\left(\widetilde{(\varphi_i\times \chi_{q-\epsilon}(j))}\times 1^{(\nu)}\right)$ for each $\nu\in\{\pm1\}$), the two members of $\irr(N)$ lying above $(1\times\chi_{q-\epsilon}(j)\times \varphi_i)\in\irr(C)$ for each $i\in I_{q-\epsilon}, j\in I_{q+\epsilon}$ (namely, $\left((1\times \chi_{q-\epsilon}(j))^{(\nu)}\times \widetilde{\varphi_i}\right)$ for each $\nu\in\{\pm1\}$), and the four characters of $\irr(N|(1\times \chi_{q-\epsilon}(j)\times 1))$ for each $j\in I_{q+\epsilon}$, namely $((1\times \chi_{q-\epsilon}(j))^{(\nu)}\times 1^{(\mu)})$ for each $(\nu,\mu)\in\{\pm1\}\times \{\pm1\}$.  (Recall this notation for characters of $GL_2^\epsilon(q):2$ discussed in \prettyref{sec:somecharsofsubgps}.)

%This yields $\frac{(q+\epsilon-1)(q-\epsilon-1)^2}{8}$ characters of the form $\left(\widetilde{(\varphi_i\times \chi_{q-\epsilon}(j))}\times \widetilde{\varphi_k}\right)$ (with the possibility $i=k$), $\frac{(q+\epsilon-1)(q-\epsilon-1)}{4}$ of each form $\left(\widetilde{(\varphi_i\times \chi_{q-\epsilon}(j))}\times 1^{(\nu)}\right)$ and $\left((1\times \chi_{q-\epsilon}(j))^{(\nu)}\times \widetilde{\varphi_i}\right)$ for each $\nu\in\{\pm1\}$, and $\frac{q+\epsilon-1}{2}$ of the form $((1\times \chi_{q-\epsilon}(j))^{(\nu)}\times 1^{(\mu)})$ for each $(\nu,\mu)\in\{\pm1\}\times \{\pm1\}$.  (Recall this notation for characters of $GL_2^\epsilon(q):2$ discussed in \prettyref{sec:somecharsofsubgps}.)

%Hence $\chi$ must be of the form $(\theta\times \varphi)$ where $\theta\in\irr(GL_2^\epsilon(q):2)$ is as in the case $Q=Q_2$ and $\varphi$ is any member of $\irr(C_{q-\epsilon}:2)$ as described in \prettyref{sec:somecharsofsubgps}.
\begin{comment}
This yields $\frac{(q+\epsilon-1)(q-\epsilon-1)^2}{8}$ characters of the form $(\varphi_i, \chi_{q-\epsilon}(j), \varphi_k)$ (with the possibility $i=k$), $\frac{(q+\epsilon-1)(q-\epsilon-1)}{4}$ of each form $(\varphi_i, \chi_{q-\epsilon}(j), 1^{(k)})$ and $(1^{(k)}, \chi_{q-\epsilon}(j), \varphi_i)$ for each $k=1,2$, and $\frac{q+\epsilon-1}{2}$ of the form $(1^{(i)}, \chi_{q-\epsilon}(j), 1^{(k)})$ for each $(i,k)\in\{1,2\}\times \{1,2\}$.
\end{comment}
To be trivial on $Q$, we again only further require $\ell^d|i$ for any $\varphi_i$ occurring in the restriction to $C$.  That is, $\dz(N/Q)$ consists of $\irr(N|(\varphi_i\times \chi_{q-\epsilon}(j)\times \varphi_k))$, $\irr(N|(\varphi_i\times \chi_{q-\epsilon}(j)\times 1))$, $\irr(N|(1\times \chi_{q-\epsilon}(j)\times \varphi_k))$, and $\irr(N|(1\times \chi_{q-\epsilon}(j)\times 1))$ where $i,k\in I_{q-\epsilon}, j\in I_{q+\epsilon}$ satisfy the additional property that $\ell^d|i,k$.

The maps in this case are below:

\scriptsize
\begin{align*}
\Omega_{Q_{2,1}}\colon &\scriptsize \left\{\begin{array}{ccc}
    \E_{13}(i)&\multirow{2}{*}{$\rightarrow \irr\left(N|(1\times\chi_{q-\epsilon}(2i)\times1)\right)$} & \epsilon=1\\
    \E_{11}(i)&& \epsilon=-1
        \end{array}\right.\quad i\in I_{q+\epsilon}\\
&\scriptsize \left\{\begin{array}{ccc}
    \E_{21}(i,j)&\multirow{2}{*}{$\rightarrow \irr\left(N|(1\times\chi_{q-\epsilon}(2i)\times\varphi_j)\right)$} & \epsilon=1\\
    \E_{18}(i,j)&& \epsilon=-1
        \end{array}\right.\quad i\in I_{q+\epsilon}, j\in I_{q-\epsilon}\\
& \scriptsize  \E_{19}(i)\rightarrow \irr\left(N|(\varphi_{2i_1}\times\chi_{q-\epsilon}(2i_2)\times1)\right)\quad\hbox{ (for $\epsilon=1$ or $-1$)},\quad i=i_1(q+\epsilon)+i_2(q-\epsilon)\in I_{q^2-1}\\
&\scriptsize \left\{\begin{array}{ccc}
    \E_{27}(i,j)&\multirow{2}{*}{$\rightarrow \irr\left(N|(\varphi_{2i_1}\times\chi_{q-\epsilon}(2i_2)\times\varphi_j)\right)$} & \epsilon=1\\
    \E_{29}(i,j)&& \epsilon=-1
        \end{array}\right.\quad i=i_1(q+\epsilon)+i_2(q-\epsilon)\in I_{q^2-1}, j\in I_{q-\epsilon}\\
\end{align*}
\scriptsize
\begin{align*}
\ast_{Q_{2,1}}\colon &\scriptsize \left\{\begin{array}{ccc}
    \mathfrak{B}_{13}(i)&\multirow{2}{*}{$\rightarrow \irr\left(N|(1\times\chi_{q-\epsilon}(2i)\times1)\right)$} & \epsilon=1\\
    \mathfrak{B}_{11}(i)&& \epsilon=-1
        \end{array}\right.\quad i\in I_{q+\epsilon}\\
&\scriptsize \left\{\begin{array}{ccc}
    \mathfrak{B}_{21}(i,j)&\multirow{2}{*}{$\rightarrow \irr\left(N|(1\times\chi_{q-\epsilon}(2i)\times\varphi_j)\right)$} & \epsilon=1\\
    \mathfrak{B}_{18}(i,j)&& \epsilon=-1
        \end{array}\right.\quad i\in I_{q+\epsilon};\quad j\in I_{q-\epsilon}, \ell^d|j\\
&\scriptsize   \mathfrak{B}_{19}(i)\rightarrow \irr\left(N|(\varphi_{2i_1}\times\chi_{q-\epsilon}(2i_2)\times1)\right)\quad\hbox{ (for $\epsilon=1$ or $-1$)},\quad i=i_1(q+\epsilon)+i_2(q-\epsilon)\in I_{q^2-1}, \ell^d|i\\
&\scriptsize \left\{\begin{array}{ccc}
    \mathfrak{B}_{27}(i,j)&\multirow{2}{*}{$\rightarrow \irr\left(N|(\varphi_{2i_1}\times\chi_{q-\epsilon}(2i_2)\times\varphi_j)\right)$} & \epsilon=1\\
    \mathfrak{B}_{29}(i,j)&& \epsilon=-1
        \end{array}\right.\quad i=i_1(q+\epsilon)+i_2(q-\epsilon)\in I_{q^2-1}; \quad j\in I_{q-\epsilon};\quad \ell^d|i,j\\
\end{align*}
\normalsize

%\vskip 10pt
\subsection{$Q=Q_{1,1,1}$}\label{sec:Q111}

Let $Q=Q_{1,1,1}$, with $\ell|(q-\epsilon)$, so $N=(GL_1^\epsilon(q):2)\wr S_3$ and $C=\left(GL_1^\epsilon(q)\right)^3$.  Also, write $L:=\left(GL_1^\epsilon(q):2\right)^3$ to denote the normal subgroup of $N$ with quotient $S_3$.
\begin{comment}
With respect to the basis $\{e_1, f_1, e_2, f_2, e_3, f_3\}$, we identify $C$ with the subgroup of elements of the form $\diag(a_1, a_1^{-1}, a_2, a_2^{-1}, a_3, a_3^{-1})$ with $a_i\in C_{q-\epsilon}$, and the order-two automorphisms on each $C_{q-\epsilon}$ acts as before, sending $a\mapsto a^{-1}$.  Here the $S_3$ acts on $L$ via $\diag(A_1, A_2, A_3)^{\sigma}=\diag(A_{\sigma^{-1}(1)}, A_{\sigma^{-1}(2)}, A_{\sigma^{-1}(3)})$ for $\sigma\in S_3$ and $A_i\in C_{q-\epsilon}:2$.

Again, we will use the notation previously established for characters of $C_{q-\epsilon}:2$.
\end{comment}
Let $\theta=(\varphi\times \varphi'\times \varphi'')\in \irr(L)=\irr(C_{q-\epsilon}:2)^3$ be a constituent of $\chi\in\irr(N)$ when restricted to $L$.  Then $\theta$ is invariant under the $S_3$ action %if and only if it is invariant under the $A_3$ action,
if and only if $\varphi=\varphi'=\varphi''$. In this case, $\theta$ extends to a character of $N$ (this can be seen, for example, from \cite[(11.31) and (6.20)]{isaacs})
and we get three such characters, corresponding to the three characters of the quotient $N/L=S_3$, by Gallagher's theorem, with degrees $\theta(1), \theta(1),$ and $2\theta(1)$.  In terms of the restriction of characters to $C\cong C_{q-\epsilon}^3$, this yields three characters in $\irr(N|(\varphi_i\times\varphi_i\times\varphi_i))$ for each $i\in I_{q-\epsilon}$, where $\varphi_i\in\irr(C_{q-\epsilon})$ is as above, and six of the characters in $\irr(N|(1\times 1\times 1))$, due to the extensions $(1^{(-1)}\times 1^{(-1)}\times 1^{(-1)})$ and $(1^{(1)}\times 1^{(1)}\times 1^{(1)})\in\irr(L|(1\times1\times1))$.  (It may be worthwhile to note here that $\irr(N|(\varphi_i\times\varphi_i\times\varphi_i))=\irr(N|(\varphi_{-i}\times\varphi_{-i}\times\varphi_{-i}))=\irr(N|(\varphi_i\times\varphi_{-i}\times\varphi_i))$.) %(This extension can be seen, for example, from the discussion preceding \cite[Theorem 6.16]{isaacs}.)

Moreover, $\theta=(\varphi\times \varphi'\times \varphi'')\in \irr(L)$ has a stabilizer $T:=N_\theta$ in $N$ with $|T/L|=2$ precisely when exactly two of $\varphi$, $\varphi'$, and $\varphi''$ are the same.  In this case, we get two extensions to $T$, and the character $\chi$ of $N$ is determined by a constituent on $T$.  Let $\varpi$ denote a $3$-cycle in $S_3$.  Then the two characters in $\irr(N|\theta)$ have restriction to $L$ as $\theta+\theta^{\varpi}+\theta^{\varpi^2}$ and have degree $3\theta(1)$.  In terms of constituents on $C$, this yields two characters in $\irr(N|(\varphi_i\times\varphi_i\times\varphi_j))$ for each $(i,j)\in I_{q-\epsilon}^2$, four characters in $\irr(N|(\varphi_i\times\varphi_i\times 1))$ for each $i\in I_{q-\epsilon}$ (due to the choices $(\widetilde{\varphi_i}\times \widetilde{\varphi_i}\times 1^{(1)}), (\widetilde{\varphi_i}\times \widetilde{\varphi_i}\times 1^{(-1)}) \in\irr(L|(\varphi_i\times\varphi_i\times 1))$), four of the characters in $\irr(N|(\varphi_i\times1\times 1))$  for each $i\in I_{q-\epsilon}$ (due to the choices $(\widetilde{\varphi_i}\times 1^{(1)}\times 1^{(1)}), (\widetilde{\varphi_i}\times 1^{(-1)}\times 1^{(-1)}) \in\irr(L|(\varphi_i\times1\times 1))$), and four more characters in $\irr(N|(1\times 1\times 1))$ (due to the extensions $(1^{(-1)}\times 1^{(-1)}\times 1^{(1)})$ and $(1^{(1)}\times 1^{(1)}\times 1^{(-1)})\in\irr(L|(1\times 1\times1))$).  Note that there are therefore a total of ten characters in $\irr(N|(1\times 1\times 1))$.  (Also, note here that $\irr(N|(1^{(-1)}\times 1^{(-1)}\times 1^{(1)}))=\irr(N|(1^{(-1)}\times 1^{(1)}\times 1^{(-1)})=\irr(N|(1^{(1)}\times 1^{(-1)}\times 1^{(-1)})$ and $\irr(N|(\varphi_i\times\varphi_i\times 1))=\irr(N|(\varphi_i\times\varphi_{-i}\times 1))=\irr(N|(1\times\varphi_i\times\varphi_i))$, and so on.)

 Finally, if $\varphi\neq\varphi'\neq\varphi''$, then the irreducible character $\theta=(\varphi\times\varphi'\times\varphi'')$ of $L$ has stabilizer $N_\theta=L$.  Hence such a character is uniquely determined by a constituent $\theta$ on $L$, restricts to $L$ as $\sum_{\rho\in S_3}\theta^\rho$, and has degree $6\theta(1)$.  In terms of constituents on $C$, this yields a unique member of $\irr(N|(\varphi_i\times\varphi_j\times\varphi_k))$ for each $(i,j,k)\in I_{q-\epsilon}^{3*}$, two members of $\irr(N|(\varphi_i\times\varphi_j\times 1))$ for each $(i,j)\in I_{q-\epsilon}^{2*}$, and the final character in $\irr(N|(\varphi_i\times1\times 1))$ for each $i\in I_{q-\epsilon}$ (due to the character $(\widetilde{\varphi_i}\times 1^{(1)}\times 1^{(-1)}) \in\irr(L|(\varphi_i\times1\times 1))$).  Note that this makes a total of five characters in each $\irr(N|(\varphi_i\times1\times 1))$.

% From here, the number and indexing of each type of character can be seen easily from the description of characters of $C_{q-\epsilon}:2$.

 Now, if $\ell\neq 3$, then every character of $N$ satisfies $\chi(1)_\ell=1$, as desired.
 %The degree $\chi(1)$ of a defect-zero character of $N/Q$ or hight-zero character of $N$ with defect group $Q$ must satisfy $\chi(1)_\ell=|N/Q|_{\ell}$, which is $1$ when $\ell\neq 3$ and $3$ when $\ell=3$.  Since the characters of $C_{q-\epsilon}:2$ have degree $1$ and $2$, we see by the above discussion that if $\ell\neq 3$, then all characters of $N$ satisfy this condition.
 If $\ell=3$, all except those characters whose restriction $\chi|_L$ to $L$ has a constituent $(\varphi\times\varphi\times\varphi)$ satisfy the condition $\chi(1)_3=3$.  Now, to be trivial on $Q$, we again just need to further require that $\ell^d|i$ for any $\varphi_i$ appearing in the restriction to a copy of $C_{q-\epsilon}$.

 The maps $\Omega_{Q_{1,1,1}}$ are below.  Note that we distinguish between the cases $\ell=3$ and $\ell\neq 3$.

\noindent\begin{minipage}[b]{0.45\linewidth}
\tiny
\begin{align*}
\begin{array}{c}\Omega_{Q_{1,1,1}} \\
(\ell\neq 3)\end{array}\colon & \scriptsize\left\{\begin{array}{ccc}
    \E_{1}\setminus\{\chi_5,\chi_{11}\}&\multirow{2}{*}{\hspace{-.1cm}$\rightarrow \irr\left(N|(1\times1\times1)\right)$} & \hspace{-.1cm}\epsilon=1\\
    \E_{1}\setminus\{\chi_4,\chi_{9}\}&& \hspace{-.1cm}\epsilon=-1
        \end{array}\right.\\
& \scriptsize\left\{\begin{array}{ccc}
    \E_{6}(i)\setminus\{\chi_{17}(i)\}&\multirow{2}{*}{\hspace{-.1cm}$\rightarrow \irr\left(N|(\varphi_i\times1\times1)\right)$} & \hspace{-.1cm}\epsilon=1\\
    \E_{7}(i)\setminus\{\chi_{20}(i)\}&& \hspace{-.1cm}\epsilon=-1
        \end{array}\right.\\&\quad i\in I_{q-\epsilon}\\
&\scriptsize \left\{\begin{array}{ccc}
    \E_{8}(i)&\multirow{2}{*}{\hspace{-.1cm}$\rightarrow \irr\left(N|(\varphi_i\times\varphi_i\times\varphi_i)\right)$} & \hspace{-.1cm}\epsilon=1\\
    \E_{9}(i)&&\hspace{-.1cm} \epsilon=-1
        \end{array}\right.\quad i\in I_{q-\epsilon}\\
& \scriptsize\left\{\begin{array}{ccc}
    \E_{11}(i)&\multirow{2}{*}{\hspace{-.1cm}$\rightarrow \irr\left(N|(\varphi_i\times\varphi_i\times1)\right)$} & \hspace{-.1cm}\epsilon=1\\
    \E_{13}(i)&& \hspace{-.1cm}\epsilon=-1
        \end{array}\right.\quad i\in I_{q-\epsilon}\\
& \scriptsize\left\{\begin{array}{ccc}
    \E_{17}(i,j)&\multirow{2}{*}{\hspace{-.1cm}$\rightarrow \irr\left(N|(\varphi_i\times\varphi_j\times1)\right)$} &\hspace{-.1cm} \epsilon=1\\
    \E_{23}(i,j)&&\hspace{-.1cm} \epsilon=-1
        \end{array}\right.\quad (i, j)\in I_{q-\epsilon}^{2*}\\
&\scriptsize \left\{\begin{array}{ccc}
    \E_{16}(i,j)&\multirow{2}{*}{\hspace{-.1cm}$\rightarrow \irr\left(N|(\varphi_i\times\varphi_i\times\varphi_j)\right)$} & \hspace{-.1cm}\epsilon=1\\
    \E_{22}(i,j)&& \hspace{-.1cm}\epsilon=-1
        \end{array}\right.\\&\quad (i, j)\in I_{q-\epsilon}^2\\
&\scriptsize \left\{\begin{array}{ccc}
    \E_{25}(i,j,k)&\multirow{2}{*}{\hspace{-.1cm}$\rightarrow \irr\left(N|(\varphi_i\times\varphi_j\times\varphi_k)\right)$} &\hspace{-.1cm} \epsilon=1\\
    \E_{32}(i,j,k)&& \hspace{-.1cm}\epsilon=-1
        \end{array}\right.\\&\quad (i,j,k)\in I_{q-\epsilon}^{3*}\\
\end{align*}
\end{minipage}
\hspace{-1.2cm}
\begin{minipage}[b]{0.45\linewidth}
%Here $\E_1$ is the set of unipotent characters.  Recall that the excluded characters $\{\chi_5, \chi_{11}, \chi_{17}(i)\}$ when $\ell|(q-1)$ and $\{\chi_4,\chi_9, \chi_{20}(i)\}$ when $\ell|(q+1)$ lie in $\underline{\irr_0}(G|Q_1)$.

%Now suppose that $\ell=3$.  The map $\Omega_{Q_{1,1,1}}$ in this case is:
\tiny
\begin{align*}
\begin{array}{c}\Omega_{Q_{1,1,1}} \\
(\ell= 3)\end{array}\colon & \scriptsize\left\{\begin{array}{ccc}
    \E_{6}(i)\setminus\{\chi_{17}(i)\}&\multirow{2}{*}{\hspace{-.1cm}$\rightarrow \irr\left(N|(\varphi_i\times1\times1)\right)$} & \hspace{-.1cm}\epsilon=1\\
    \E_{7}(i)\setminus\{\chi_{20}(i)\}&& \hspace{-.1cm}\epsilon=-1
        \end{array}\right.\\&\quad i\in I_{q-\epsilon}, m\not|\,i\\
& \scriptsize\left\{\begin{array}{ccc}
    \E_{11}(i)&\multirow{2}{*}{\hspace{-.1cm}$\rightarrow \irr\left(N|(\varphi_i\times\varphi_i\times1)\right)$} & \hspace{-.1cm}\epsilon=1\\
    \E_{13}(i)&& \hspace{-.1cm}\epsilon=-1
        \end{array}\right.\\&\quad i\in I_{q-\epsilon}, m\not|\,i\\
&\scriptsize \left\{\begin{array}{ccc}
    \E_{17}(i,j)&\multirow{2}{*}{\hspace{-.1cm}$\rightarrow \irr\left(N|(\varphi_i\times\varphi_j\times1)\right)$} & \hspace{-.1cm}\epsilon=1\\
    \E_{23}(i,j)&& \hspace{-.1cm}\epsilon=-1
        \end{array}\right.\\
&\scriptsize \quad (i, j)\in I_{q-\epsilon}^{2*}; \hbox{\scriptsize $m$ does not divide one of $i, j$}\\
&\scriptsize \left\{\begin{array}{ccc}
    \E_{16}(i,j)&\multirow{2}{*}{\hspace{-.1cm}$\rightarrow \irr\left(N|(\varphi_i\times\varphi_i\times\varphi_j)\right)$} & \hspace{-.1cm}\epsilon=1\\
    \E_{22}(i,j)&& \hspace{-.1cm}\epsilon=-1
        \end{array}\right.\\&\quad (i, j)\in I_{q-\epsilon}^2; \scriptsize i\not\equiv\pm j\mod m\\
&\scriptsize \left\{\begin{array}{ccc}
    \E_{25}(i,j,k)&\multirow{2}{*}{\hspace{-.1cm}$\rightarrow \irr\left(N|(\varphi_i\times\varphi_j\times\varphi_k)\right)$} & \hspace{-.1cm}\epsilon=1\\
    \E_{32}(i,j,k)&& \hspace{-.1cm}\epsilon=-1
        \end{array}\right.\\
&\scriptsize\quad (i,j,k)\in I_{q-\epsilon}^{3*}; i\not\equiv\pm j\not\equiv\pm k\not\equiv \pm i\mod m\\
\end{align*}
\end{minipage}
\normalsize

%We again, recall that $\chi_{17}(i)\in \mathcal{E}_6(i)$ and $\chi_{20}(i)\in \mathcal{E}_7(i)$ have defect group $Q_1$ in the respective cases $\epsilon=1, -1$.
In the case that $\ell=3$, we have excluded the cases when $m$ divides all indices, since then the $\E_i(J)$ given above actually lie in the principal block, so have defect group $P$.  Similarly, if $m$ divides $i$, then $\varphi_i$ maps a $q-\epsilon$ root of unity to an $\ell^d$ root, so if $m$ divides all indices, then the image $\omega_\theta^\ast$ in $\overline{\F}_\ell^\times$ of the central character for the character $\theta$ of $N$ is the same as that for the principal character $1_N$.  Hence they lie in the same block and $\theta$ has defect group $P$, which is also a Sylow $\ell$-subgroup of $N$.

%////somewhere have explained the notation $\omega_\chi^\ast$

Moreover, in the cases of $\E_{16}(i,j), \E_{25}(i,j,k)$ (resp. $\E_{22}(i,j), \E_{32}(i,j,k)$) (for $3|(q-1)$, resp. $3|(q+1)$), we must also exclude any case where the indices are all equivalent (but nonzero) modulo $m$, as then these series lie in the block $B_{8}(k)$ (resp. $B_9(k)$) for some $k\in I_{q-\epsilon}$ with $3^d|k$, which also has defect group $P$.

Suppose $\ell\neq 3$ so that $Q_{1,1,1}\in\mathrm{Syl}_\ell(G)$.  The map $\ast_{Q_{1,1,1}}$ in this case is:
\scriptsize
\begin{align*}
\begin{array}{c}\ast_{Q_{1,1,1}} \\
(\ell\neq 3)\end{array} \colon&\scriptsize  \mathfrak{B}_{0}\rightarrow \irr\left(N|(1\times1\times1)\right) \\
 & \scriptsize\left\{\begin{array}{ccc}
    \mathfrak{B}_{6}(i)^{(0)}&\multirow{2}{*}{$\rightarrow \irr\left(N|(\varphi_i\times1\times1)\right)$} & \epsilon=1\\
    \mathfrak{B}_{7}(i)^{(0)}&& \epsilon=-1
        \end{array}\right.\quad i\in I_{q-\epsilon}, \ell^d|i\\
&\scriptsize \left\{\begin{array}{ccc}
    \mathfrak{B}_{8}(i)&\multirow{2}{*}{$\rightarrow \irr\left(N|(\varphi_i\times\varphi_i\times\varphi_i)\right)$} & \epsilon=1\\
    \mathfrak{B}_{9}(i)&& \epsilon=-1
        \end{array}\right.\quad i\in I_{q-\epsilon}, \ell^d|i\\
& \scriptsize\left\{\begin{array}{ccc}
    \mathfrak{B}_{11}(i)&\multirow{2}{*}{$\rightarrow \irr\left(N|(\varphi_i\times\varphi_i\times1)\right)$} & \epsilon=1\\
    \mathfrak{B}_{13}(i)&& \epsilon=-1
        \end{array}\right.\quad i\in I_{q-\epsilon}, \ell^d|i\\
&\scriptsize \left\{\begin{array}{ccc}
    \mathfrak{B}_{17}(i,j)&\multirow{2}{*}{$\rightarrow \irr\left(N|(\varphi_i\times\varphi_j\times1)\right)$} & \epsilon=1\\
    \mathfrak{B}_{23}(i,j)&& \epsilon=-1
        \end{array}\right.\quad (i, j)\in I_{q-\epsilon}^{2*}, \ell^d|i,j\\
&\scriptsize \left\{\begin{array}{ccc}
    \mathfrak{B}_{16}(i,j)&\multirow{2}{*}{$\rightarrow \irr\left(N|(\varphi_i\times\varphi_i\times\varphi_j)\right)$} & \epsilon=1\\
    \mathfrak{B}_{22}(i,j)&& \epsilon=-1
        \end{array}\right.\quad (i, j)\in I_{q-\epsilon}^2, \ell^d|i,j\\
&\scriptsize \left\{\begin{array}{ccc}
    \mathfrak{B}_{25}(i,j,k)&\multirow{2}{*}{$\rightarrow \irr\left(N|(\varphi_i\times\varphi_j\times\varphi_k)\right)$} & \epsilon=1\\
    \mathfrak{B}_{32}(i,j,k)&& \epsilon=-1
        \end{array}\right.\quad (i,j,k)\in I_{q-\epsilon}^{3*};\quad \ell^d|i,j,k\\
\end{align*}
\normalsize

Now suppose that $\ell=3$.  Then we will distribute $\mathfrak{B}_0$ between the three sets $\ibr_3(G|Q_{1,1,1})$, $\ibr_3(G|P)$, and $\ibr_3(G|R)$.  Of the $10$ Brauer characters in the block $B_0$, we will require that four of these belong to $\ibr_3(G|Q_{1,1,1})$, %(to map to the characters $(1^{(1)}, 1^{(1)}, 1^{(2)})^{(1)}, (1^{(1)}, 1^{(1)}, 1^{(2)})^{(2)}, (1^{(1)}, 1^{(2)}, 1^{(2)})^{(1)}, (1^{(1)}, 1^{(2)}, 1^{(2)})^{(2)}$ of $N_G(Q_{1,1,1})$),
another four belong to $\ibr_3(G|P)$, %(to map to the characters $(1^{(1)}, 1), (1^{(2)},1), (1^{(1)},\lambda), (1^{(2)},\lambda)$ of $N_G(P)/P$)
 and the final two belong to $\ibr_3(G|R)$.  % (to map to the characters $1^{(1)}\beta, 1^{(2)}\beta$ of $N_G(R)/R$).
In fact, the choice of this partition is arbitrary, as long as the number of characters assigned to each subgroup is correct, so we will simply write $\mathfrak{B}_0=\mathfrak{B}_0(Q_{1,1,1})\cup \mathfrak{B}_0(P)\cup \mathfrak{B}_0(R)$ for an appropriate partition.   Below is the corresponding map for $Q_{1,1,1}$.  Those for $P$ and $R$ will follow in the next two subsections.

%Then the Brauer characters which are found in $\ibr_\ell(G|Q_{1,1,1})$ when $\ell\neq 3$ are now distributed between $\ibr_3(G|Q_{1,1,1}), \ibr_3(G|P)$, and $\ibr_3(G|R)$.  In particular $B_0$ is distributed between the three sets.  Of the $10$ Brauer characters in $B_0$, we require that four of these belong to $\ibr_3(G|Q_{1,1,1})$ (to map to the characters $(1^{(1)}, 1^{(1)}, 1^{(2)})^{(1)}, (1^{(1)}, 1^{(1)}, 1^{(2)})^{(2)}, (1^{(1)}, 1^{(2)}, 1^{(2)})^{(1)}, (1^{(1)}, 1^{(2)}, 1^{(2)})^{(2)}$ of $N_G(Q_{1,1,1})$), another four belong to $\ibr_3(G|P)$ (to map to the characters $(1^{(1)}, 1), (1^{(2)},1), (1^{(1)},\lambda), (1^{(2)},\lambda)$ of $N_G(P)/P$), and the final two belong to $\ibr_3(G|R)$ (to map to the characters $1^{(1)}\beta, 1^{(2)}\beta$ of $N_G(R)/R$).  In fact, the choice of this partition is arbitrary, as long as the number of characters assigned to each subgroup is correct, so we will simply write $B_0=B_0(Q_{1,1,1})\cup B_0(P)\cup B_0(R)$ for the partition.  Similarly, of the three Brauer characters of the block $B_8(i)$ with $3^d|i$ (resp. $B_9(i)$) when $\ell|(q-1)$ (resp. $\ell|(q+1)$), we require that two of these are members of $\ibr_3(G|P)$ and the other is a member of $\ibr_3(G|R)$.  Again, the partition is arbitrary, and we will write $B_8(i)=B_8(i,P)\cup B_8(i,R)$ (resp. $B_9(i)=B_9(i,P)\cup B_9(i,R)$) for an appropriate partition.

\scriptsize
\begin{align*}
\begin{array}{c}\ast_{Q_{1,1,1}} \\
(\ell= 3)\end{array} \colon&\scriptsize  \mathfrak{B}_{0}(Q_{1,1,1})\rightarrow \irr\left(N|(1^{(\nu)}\times1^{(\nu)}\times1^{(\mu)})\right),\quad \nu\neq\mu\in\{\pm1\} \\
 &\scriptsize \left\{\begin{array}{ccc}
    \mathfrak{B}_{6}(i)^{(0)}&\multirow{2}{*}{$\rightarrow \irr\left(N|(\varphi_i\times1\times1)\right)$} & \epsilon=1\\
    \mathfrak{B}_{7}(i)^{(0)}&& \epsilon=-1
        \end{array}\right.\quad i\in I_{q-\epsilon}, 3^d|i\\
&\scriptsize \left\{\begin{array}{ccc}
    \mathfrak{B}_{11}(i)&\multirow{2}{*}{$\rightarrow \irr\left(N|(\varphi_i\times\varphi_i\times1)\right)$} & \epsilon=1\\
    \mathfrak{B}_{13}(i)&& \epsilon=-1
        \end{array}\right.\quad i\in I_{q-\epsilon}, 3^d|i\\
&\scriptsize \left\{\begin{array}{ccc}
    \mathfrak{B}_{17}(i,j)&\multirow{2}{*}{$\rightarrow \irr\left(N|(\varphi_i\times\varphi_j\times1)\right)$} & \epsilon=1\\
    \mathfrak{B}_{23}(i,j)&& \epsilon=-1
        \end{array}\right.\quad (i, j)\in I_{q-\epsilon}^{2*}, 3^d|i,j\\
&\scriptsize \left\{\begin{array}{ccc}
    \mathfrak{B}_{16}(i,j)&\multirow{2}{*}{$\rightarrow \irr\left(N|(\varphi_i\times\varphi_i\times\varphi_j)\right)$} & \epsilon=1\\
    \mathfrak{B}_{22}(i,j)&& \epsilon=-1
        \end{array}\right.\quad (i, j)\in I_{q-\epsilon}^2, 3^d|i,j\\
& \scriptsize\left\{\begin{array}{ccc}
    \mathfrak{B}_{25}(i,j,k)&\multirow{2}{*}{$\rightarrow \irr\left(N|(\varphi_i\times\varphi_j\times\varphi_k)\right)$} & \epsilon=1\\
    \mathfrak{B}_{32}(i,j,k)&& \epsilon=-1
        \end{array}\right.\quad (i,j,k)\in I_{q-\epsilon}^{3*},\quad 3^d|i,j,k\\
\end{align*}
\normalsize

Note that for the image of $\mathfrak{B}_0(Q_{1,1,1})$, we have used the notation for the constituent when restricted to $L$ rather than $C$.

%\vskip 10pt
\subsection{$Q=P$}

Now suppose $Q=P$ with $\ell=3|(q-\epsilon)$.  Direct calculation and arguments analogous to those in \cite{An94} show that \cite[Formula (2.5)]{An94} holds for $Q$, and we have $N/Q\cong \left(C_m:2\right)\times C_2$.  Certainly, any character of $N/Q$ has defect zero since $P$ is a Sylow $3$-subgroup of $G$.  We can view $C_m:2$ as a quotient of $C_{q-\epsilon}:2$
\begin{comment}
=\left\langle\diag(a,a,a,a^{-1},a^{-1},a^{-1})\right\rangle\rtimes\left\langle\left(
                                                                                 \begin{array}{cc}
                                                                                   0 & I_3 \\
                                                                                   I_3 & 0 \\
                                                                                 \end{array}
                                                                               \right)\right\rangle$, with respect to the
standard basis $\{e_1,e_2,e_3,f_1,f_2,f_3\}$, \end{comment}
and as such, the characters of $C_m:2$ are of the form $1^{(1)}, 1^{(-1)},$ and $\widetilde{\varphi_i}=\ind_{C_{q-\epsilon}}^{C_{q-\epsilon}:2}\varphi_i$ as before, where $3^d|i$.  For the $C_2$ factor, let $\langle\lambda\rangle=\irr(C_2)$.

%\subsubsection{Height-Zero Characters of $N_G(P)$ with Defect Group $P$}\label{sec:ht0P}
\begin{comment}
Let $\ell=3|(q^2-1)$ with $q\equiv \epsilon\mod 3$, and let $P\in \mathrm{Syl}_\ell(G)$.  All characters of $N_G(P)$ have defect group $P$ (see for example \cite[Corollary (15.39)]{isaacs}), since $P$ is an $\ell-$radical subgroup and $P\in \mathrm{Syl}_\ell(N_G(P))$.

 We claim that the height-zero characters of $N_G(P)$ can be indexed by $(\theta,\phi)$, where $\theta\in\irr(C_{q-\epsilon}:2)$ and $\phi\in \irr(S_3)$.  Indeed, \end{comment}
Write $P_1:=Q_{1,1,1}$, and note that $P=P_1\rtimes C_3$.  Then $C\leq C_G(P_1)=(C_{q-\epsilon})^3$, and since $C$ must commute with the $C_3$-action, we see that in fact $C\cong C_{q-\epsilon}$ is the subgroup consisting of $(x,x,x)\in (C_{q-\epsilon})^3$ for $x\in C_{q-\epsilon}$.  Now, $N$ is as in \cite[Formula (2.5)]{An94} and can be written as a semidirect product $K\rtimes S_3$, where $K\leq \left(C_{q-\epsilon}:2\right)^3$ is comprised of elements $(X,Y,Z)$ where $X,Y,Z\in C_{q-\epsilon}:2$ belong to the same coset modulo $C_{3^d}$.  Let $\varphi\in\irr_0(N|P)=\irr_{3'}(N)$.  Since $P_1$ is normal in $N$, we know by Clifford theory that $\varphi|_{P_1}$ is the sum of $N$-conjugates of some $(\mu^i\times\mu^j\times\mu^k)$, where $\mu$ is the character of $C_{3^d}$ that sends a fixed generator to a fixed primitive $3^d$ root of unity in $\C$ and $0\leq i,j,k\leq 3^d-1$.  If the $i,j,k$ are not all the same, then the $S_3$ action will cause the number of distinct conjugates in this decomposition to be a multiple of $3$, and hence $\varphi$ will have degree divisible by $3$, contradicting the fact that $\varphi$ has height zero.  Hence an irreducible constituent of the restriction of $\varphi$ to $P_1$ is of the form $\theta_i:=(\mu^i\times\mu^i\times\mu^i)$ for some $0\leq i\leq 3^d-1$, with $\mu$ as above.

Now, we can write $K=(P_1\times C_m):2$ (here $C_m\leq C$), and let $J:=P_1\times C_m$ be the index-2 subgroup.  % is given by $\diag(A,A,A)$ for $A\in C_m\leq SL_2(q)$ under the usual embedding. %The stabilizer in $K$ of $\theta_i:=(\mu^i,\mu^i,\mu^i)$ is therefore $P_1\times C_m$ if $i\neq 0$ and all of $K$ if $i=0$.  Hence $\theta_i$ extends to a character of $P_1\times C_m$, so all such characters can be written
The extensions of $\theta_i$ to $J$ are of the form $\theta_i\times\delta^j$ for $0\leq j\leq m-1$, where $\delta\in\irr(C_m)$  represents the character sending a fixed generator of $C_m$ to a fixed primitive $m$th root of unity in $\C$.  We will write $\theta(i,j):=\theta_i\times\delta^j$ for this character of $J$.  Note that the action of $K/J\cong C_2$ sends $\theta(i,j)$ to $\theta(-i,-j)$. Moreover, each $\theta(i, j)$ is invariant under the $S_3$ action, so extends to $J\rtimes S_3$.  (This can be seen, for example, from \cite[(11.31) and (6.20)]{isaacs}.)

%Further, $\theta_i\times\phi$ restricts to $C\cong C_{q-\epsilon}$ as $\varphi_j$ for some $0\leq j\leq q-\epsilon-1$, and this restriction uniquely determines $i$ and $\phi$ (indeed, $\varphi_j=\mu^i\phi$).

From here, we see that if $\varphi|_J$ contains a nontrivial constituent, then $\varphi|_K$ is uniquely determined by a constituent $\theta(i,j)$ on $J$ with $(i,j)\neq(0,0)$, and for each such choice of $(i,j)$ there are $3$ characters $\varphi\in\irr(N|\theta(i,j))=\irr(N|\theta(-i,-j))$, by Gallagher's theorem.  Moreover, there are $6$ characters $\varphi\in\irr_0(N|P)$ with $J\leq\ker\varphi$, also by Gallagher's theorem.  %(Given by $1^{(1)}\beta, 1^{(-1)}\beta$ for $\beta\in\irr(S_3)$)

\begin{comment}We may view $\theta_i\phi$ as the extension by the trivial character of a character of $C$, and this character is invariant under the $S_3$ action.  Hence $\varphi|_K$ can be seen as a character of $C_{q-\epsilon}:2$, and it lifts to $N_G(P)$. %(otherwise, $\varphi|_K=e\cdot\varphi_k$ for some integer $e$ with $e^2=6$, a contradiction).
 So, the height-zero characters of $N_G(P)$ are of the form $\varphi_k\beta$, where $\beta\in\irr(S_3)$, by Gallagher's theorem.  (We will write $1, \lambda, \mu$ for the three irreducible characters of $S_3$ of degrees $1,1,$ and $2$, respectively.)\end{comment}

To describe the images of the map $\Omega_P$, we use the constituents when restricted to $J$ rather than $C$.  For the map $\ast_P$, we use the notation of characters of $N/P$ as in the first paragraph of this section.  Recall from \prettyref{sec:Q111} the partition $\mathfrak{B}_0=\mathfrak{B}_0(Q_{1,1,1})\cup \mathfrak{B}_0(P)\cup \mathfrak{B}_0(R)$ with four of the Brauer characters of $\mathfrak{B}_0$ lying in $\mathfrak{B}_0(P)$.  Similarly, of the three Brauer characters of the block $B_8(i)$ with $3^d|i$ (resp. $B_9(i)$) when $\ell|(q-1)$ (resp. $\ell|(q+1)$), we require that two of these are members of $\ibr_3(G|P)$ and the other is a member of $\ibr_3(G|R)$.  Again, the partition is arbitrary, and we will write $\mathfrak{B}_8(i)=\mathfrak{B}_8(i,P)\cup \mathfrak{B}_8(i,R)$ (resp. $\mathfrak{B}_9(i)=\mathfrak{B}_9(i,P)\cup \mathfrak{B}_9(i,R)$) for an appropriate partition.  The maps for $P$ are as follows: %Then $C=C_G(P)\cong C_{q-\epsilon}$ is the subgroup $Z(GL_3^\epsilon(q))$ viewed as a subgroup of $G$ in the usual way.  Hence the notation for the constituents of a character of $N$ restricted to $C$ are $1$ and $\varphi_i$ as before.

\noindent\begin{minipage}[b]{0.45\linewidth}
\tiny
\begin{align*}
\begin{array}{c}\Omega_{P} \\
(\ell= 3)\end{array}\colon &\scriptsize \left\{\begin{array}{ccc}
    \{\chi_1,\chi_3,\chi_4,\chi_9,\chi_{10}, \chi_{12}\}&\multirow{2}{*}{\hspace{-.1cm}$\rightarrow \irr\left(N|1_J\right)$} & \hspace{-.1cm}\epsilon=1\\
    \{\chi_1,\chi_2,\chi_5,\chi_8,\chi_{11}, \chi_{12}\}&& \hspace{-.1cm}\epsilon=-1
        \end{array}\right.\\
& \scriptsize\left\{\begin{array}{ccc}
    \E_{8}(i)&\multirow{2}{*}{\hspace{-.1cm}$\rightarrow \irr\left(N|\theta(i\hspace{-.15cm}\mod 3^d, 3(i\hspace{-.15cm}\mod m))\right)$} & \hspace{-.1cm}\epsilon=1\\
    \E_{9}(i)&&\hspace{-.1cm} \epsilon=-1
        \end{array}\right.\\&\quad i\in I_{q-\epsilon}\\
\end{align*}
\end{minipage}
\begin{minipage}[b]{0.45\linewidth}
\tiny
\begin{align*}
\begin{array}{c}\ast_{P} \\
(\ell= 3)\end{array} \colon& \scriptsize \mathfrak{B}_{0}(P)\rightarrow \{(1^{(\nu)}\times\lambda^{\mu})\colon  \nu, \mu\in\{\pm1\} \}\\
&\scriptsize\left\{\begin{array}{ccc}
    \mathfrak{B}_{8}(i, P)&\multirow{2}{*}{\hspace{-.1cm}$\rightarrow \{(\widetilde{\varphi}_{3i}\times\lambda^{\mu})\colon \mu\in\{\pm1\}\}$} & \hspace{-.1cm}\epsilon=1\\
    \mathfrak{B}_{9}(i, P)&& \hspace{-.1cm}\epsilon=-1
        \end{array}\right.\\&\quad i\in I_{q-\epsilon}, 3^d|i\\
\end{align*}
\end{minipage}
\normalsize

%\vskip 10pt
\subsection{$Q=R$}
Let $\ell=3|(q-\epsilon)$ with $(q-\epsilon)_{3'}=m, (q-\epsilon)_3=3^d$, and let $Q=R$ be the group $Z\cdot E_{27}\leq GL_3^\epsilon(q)$ viewed as a subgroup of $G$ %through the usual identification $A\mapsto \diag(A, A^{-T})$
as in \prettyref{sec:radsubs}.  Then arguing as in \cite[(1C)]{An94}, we see $N:=N_G(R)$ has an index two subgroup $N^\circ$ satisfying $N/N^\circ\cong N_G(Z)/C_G(Z)=C_2$.  Further, $R\lhd N^\circ$, and we have $N/R=(N^\circ/R).2$, with the order-$2$ automorphism given by the action of \begin{comment}the matrix $\left(
                                                                      \begin{array}{cc}
                                                                        0 & I_3 \\
                                                                        I_3 & 0 \\
                                                                      \end{array}
                                                                    \right)$, which acts on $GL_3^\epsilon(q)$ by
\end{comment}
the map $\tau\colon A\mapsto (A^T)^{-1}$ on $GL_3^\epsilon(q)$.  Also, $Sp_2(3)\cong N^\circ/(RZ(N^\circ))\cong (N^\circ/R)/(RZ(N^\circ)/R)$, so $N^\circ/R$ contains a quotient group isomorphic to $Sp_2(3)$.  Moreover, each linear character of $RZ(N^\circ)/R\cong Z(N^\circ)/(R\cap Z(N^\circ))= Z(N^\circ)/\textbf{O}_3(Z(N^\circ))\cong C_m$ is extendable to a character of $N^\circ/R$ (again arguing as in \cite[(1C)]{An94}).  Hence by Gallagher's theorem, the characters of $N^\circ/R$ are exactly the characters $\tilde{\theta}\varsigma$ with $\tilde{\theta}$ the extension of $\theta\in\irr(RZ(N^\circ)/R)=\irr(C_m)$ to $N^\circ$ and $\varsigma\in\irr(N^\circ/(RZ(N^\circ))=\irr(Sp_2(3))$.

Since $|N/R|=2m|Sp_2(3)|$, we have that a defect-zero character of $N/R$ will have $\chi(1)_{3}=3$.  Since $N^\circ/R$ has index 2 in $N/R$, the constituents of the restriction of $\chi$ to $N^\circ/R$ must satisfy this degree condition as well, so we require that $\varsigma$ have degree divisible by $3$.  Since $Sp_2(3)$ has exactly one such character (namely, the Steinberg character, of degree $3$), we will henceforth use $\varsigma$ to denote this Steinberg character.  Note that $\varsigma$ is invariant under the action of $\tau$, and that as before, the principal character is the only character of $C_m$ invariant under $\tau$.

This yields $\frac{m-1}{2}+2$ defect-zero characters of $N/R$, which we will denote by $1^{(1)}\varsigma, 1^{(-1)}\varsigma$, and %the $\frac{m-1}{2}$ characters
$\widetilde{\varphi_i}\varsigma$ for $i\in I_{q-\epsilon}$ with $3^d|i$, where by an abuse of notation, $\widetilde{\varphi_i}\varsigma$ represents the defect-zero character whose restriction to $RZ(N^\circ)/R\cong C_m$ contains $\varphi_i$ as a constituent and $1^{(1)}\varsigma, 1^{(-1)}\varsigma$ are the two extensions to $N/R$ of defect-zero characters of $N^\circ/R$ trivial on $RZ(N^\circ)/R$.

We will see that the $3$-radical subgroup $R$ does not appear as a defect group for any block of $G$, so we define no map $\Omega_R$.  (See part (2) of the proof of \prettyref{prop:ht0}.)  Below is the map $\ast_R$.  Recall our partitions $\mathfrak{B}_0=\mathfrak{B}_0(Q_{1,1,1})\cup \mathfrak{B}_0(P)\cup \mathfrak{B}_0(R)$, $\mathfrak{B}_8(i)=\mathfrak{B}_8(i,P)\cup \mathfrak{B}_8(i,R)$, and $\mathfrak{B}_9(i)=\mathfrak{B}_9(i,P)\cup \mathfrak{B}_9(i,R)$.
\scriptsize
\begin{align*}
\begin{array}{c}\ast_{R} \\
(\ell= 3)\end{array} \colon& \scriptsize \mathfrak{B}_{0}(R)\rightarrow \{1^{(\mu)}\varsigma\colon \mu\in\{\pm1\} \}\\
&\scriptsize\left\{\begin{array}{ccc}
    \mathfrak{B}_{8}(i, R)&\multirow{2}{*}{$\rightarrow \widetilde{\varphi}_{3i}\varsigma$} & \epsilon=1\\
    \mathfrak{B}_{9}(i, R)&& \epsilon=-1
        \end{array}\right.\quad i\in I_{q-\epsilon}, 3^d|i\\
\end{align*}
\normalsize

%\vskip 10pt
\subsection{$Q=Q^{(3)}$}
Let $Q=Q^{(3)}$ with $\ell|(q^2+\epsilon q+1)$, so $N=C:6$, and $C=C_{q^3-\epsilon}$.  %Viewing $Q$ as a subgroup of $GL_3^\epsilon(q)$,  % with the inclusion $A\mapsto \diag(A, A^{-T})$ in $Sp_6(q)$, with respect to the standard basis,we see $C$ is also the centralizer of $Q$ in $GL_3^\epsilon(q)$.  %The extension by $C_6$ can be viewed as $\langle X\rangle\times A_3$, where $X$ acts on $GL_3^\epsilon(q)$ via the involutory automorphism $X\mapsto X^{-T}$ and $A_3$ acts on $GL_3^\epsilon(q)$ by permuting the rows.  In particular, if $c$ generates $C_{q^3-\epsilon}$, then a generator of $C_6$ acts via $c\mapsto c^{-\epsilon q}$, a generator of $A_3$ acts via $c\mapsto c^{q^2}$, and $X$ acts via $c\mapsto c^{-1}$.
Let $\tau, \beta$ be as in the discussion preceding \prettyref{prop:radsubs}, so that $N/C=\langle \tau,\beta\rangle$.  Let $\phi_i\in\irr(C)$ be the character which maps $\widetilde{\zeta}\mapsto \zeta^i$, where $\widetilde{\zeta}$ is a fixed generator of $C$ and $\zeta=\exp\left(\frac{2\pi\sqrt{-1}}{q^3-\epsilon}\right)$.  Let $\chi\in\irr(N)$ and let $\phi_i$ be a constituent of $\chi|_C$.  Note that $\phi_i$ is invariant under the action of $\beta$ if and only if $(q^2+\epsilon q+1)|i$.

% If $(q^2+\epsilon q+1)|i$, then $\phi_i$ is a character of $C_{q-\epsilon}=Z(GL_3^\epsilon(q))$, so is invariant under the action of $\beta$.  Conversely, if $\phi_i$ is invariant under $\beta$, then $\zeta^i=\zeta^{\epsilon qi}=\zeta^{q^2i}$, and hence $\zeta^i$ is a $q-\epsilon$ root of unity, so $(q^2+\epsilon q+1)|i$.

Now, since $\phi_i\neq \phi_{-i}$ for any $i\neq0$, it follows that if $(q^2+\epsilon q +1)\not|\,i$, then $\phi_i$ has stabilizer $C$ in $N$, and $\chi$ is uniquely determined by a constituent $\phi_i$ on $C$ for $i\in I_{q^3-\epsilon}$.  %By an abuse of notation, write $\phi_i$ for the irreducible character $\ind_C^N(\phi_i)$ of $N$, which restricts to $C$ as the sum $\phi_i +\phi_{-i}+\phi_{qi}+\phi_{-qi}+\phi_{q^2i}+\phi_{-q^2i}$.
If $i\neq0$ and $(q^2+\epsilon q+1)|i$, then $\phi_i$ has $[N:\stab_N(\phi_i)]=2$, and $[\stab_N(\phi_i):C]=3$.   In this case, there are three choices of $\chi$ that restrict to $C$ as the sum $\phi_i+\phi_{-i}$.  That is, we obtain three characters $\chi\in\irr(N|\phi_i)$ for each $i\in I_{q-\epsilon}$.  Finally, there are six characters in $\irr(N|1_C)$.

%There are $q(q^2-1)/6$ characters of $N$, indexed by $i\in I_{q^3-\epsilon}$, which restrict to $C$ as the sum $\phi_i +\phi_{-i}+\phi_{qi}+\phi_{-qi}+\phi_{q^2i}+\phi_{-q^2i}$.

% Again, by an abuse of notation, write $\phi_i^{(1)}, \phi_i^{(2)}, \phi_i^{(3)}$ for the three irreducible characters of $N$ that restricts to $C$ as the sum $\phi_i +\phi_{-i}$. Hence for each $j=1,2,3$, there are $\frac{q-\epsilon-1}{2}$ characters $\phi_i^{(j)}$, indexed by $i\in I_{q-\epsilon}$.

When $\ell\neq 3$, $Q^{(3)}\in\mathrm{Syl}_\ell(G)$ is the unique (up to conjugacy) radical subgroup and every character of $N$ has degree prime to $\ell$, and hence $\irr_0(N|Q)=\irr_{\ell'}(N)=\irr(N)$.  Recall that in this case, we define no map $\ast_Q$, as by \cite[Proposition 6.2]{SpathBAWCreduction}, $G$ is good for the BAWC. The map $\Omega_{Q^{(3)}}$ in this case is:

\scriptsize
\begin{align*}
\begin{array}{c}\Omega_{Q^{(3)}} \\
(\ell\neq 3)\end{array} \colon&\scriptsize
\left\{\begin{array}{ccc}
    \{\chi_1,\chi_3,\chi_4,\chi_9,\chi_{10}, \chi_{12}\}&\multirow{2}{*}{$\rightarrow \irr\left(N|1_C\right)$} & \epsilon=1\\
    \{\chi_1,\chi_2,\chi_5,\chi_8,\chi_{11}, \chi_{12}\}&& \epsilon=-1
        \end{array}\right.\\
&\scriptsize \left\{\begin{array}{ccc}
    \E_{8}(i)&\multirow{2}{*}{$\rightarrow \irr\left(N|\phi_{(q^2+\epsilon q+1)i}\right)$} & \epsilon=1\\
    \E_{9}(i)&& \epsilon=-1
        \end{array}\right.\quad i\in I_{q-\epsilon}\\
& \scriptsize\left\{\begin{array}{ccc}
    \E_{31}(i)&\multirow{2}{*}{$\rightarrow \irr\left(N|\phi_i\right)$} & \epsilon=1\\
    \E_{34}(i) && \epsilon=-1
        \end{array}\right.\quad i\in I_{q^3-\epsilon}
\end{align*}
\normalsize

When $\ell=3$, recall that a defect-zero character of $N/Q$ or a height-zero character of $N$ with defect group $Q$ will have $\chi(1)_3=3$.  Let $\chi$ be such a character, with constituent $\phi_i$ on $C$.  Then $|\stab_N(\phi_i)|$ cannot be divisible by $3$, so $\phi_i$ must not be stabilized by $\beta$ and we see $(q^2+\epsilon q+1)\not|\,i$.  Hence the characters with $\chi(1)_3=3$ are exactly the members of $\irr(N|\phi_i)$ for $i\in I_{q^3-\epsilon}$.  To be trivial on $Q$, we just further require that $3^{d+1}|i$. %, which yields $m(n-1)/6$ defect-zero characters of $N/Q$,  where $n=(q^2+\epsilon q+1)_{\ell'}$.

The maps for this group are below.  We note that the characters of $G$ involved in the maps when $\ell=3$ correspond to those involved in the maps for $Q_3$ above when $\ell\neq 3$.

\noindent\begin{minipage}[b]{0.45\linewidth}
\scriptsize
\[
\begin{array}{c}\Omega_{Q^{(3)}} \\
(\ell= 3)\end{array} \colon\scriptsize \left\{\begin{array}{ccc}
    \E_{31}(i)&\multirow{2}{*}{\hspace{-.2cm}$\rightarrow \irr\left(N|\phi_i\right)$} & \hspace{-.1cm}\epsilon=1\\
    \E_{34}(i) && \hspace{-.1cm}\epsilon=-1
        \end{array}\right.\hbox{   } i\in I_{q^3-\epsilon}
\]
\end{minipage}
\hspace{.6cm}
\begin{minipage}[b]{0.45\linewidth}
\scriptsize
\[
\begin{array}{c}\ast_{Q^{(3)}} \\
(\ell= 3)\end{array}\colon\scriptsize \left\{\begin{array}{ccc}
    \mathfrak{B}_{31}(i)&\multirow{2}{*}{\hspace{-.2cm}$\rightarrow \irr\left(N|\phi_i\right)$} & \hspace{-.1cm}\epsilon=1\\
    \mathfrak{B}_{34}(i) && \hspace{-.1cm}\epsilon=-1
        \end{array}\right.\hbox{   } i\in I_{q^3-\epsilon}, 3^{d+1}|i
\]
\end{minipage}
\normalsize

%\vskip 10pt
\subsection{$Q=Q^{(2)}$}
Now let $\ell|(q^2+1)$ and let $Q=Q^{(2)}$ be a Sylow $\ell$-subgroup of $G$.  Then again $\irr_0(N|Q)=\irr_{\ell'}(N)=\irr(N)$, as $N=\left(C_{q^2+1}:2^2\right)\times Sp_2(q)$.  Recall $C=C_{q^2+1}\times Sp_2(q)$.  %Write $C:=C_G(Q)=C_{q^2+1}\times Sp_2(q)$ and $N:=N_G(Q)$. Then again all characters of $N$ have defect group $Q$, so $\irr_0(N|Q)$ is exactly the set of characters of $N$ with degree relatively prime to $\ell$.  However, , so every character of $N$ satisfies this condition.

Fix a generator $\widetilde{\xi_2}$ of $C_{q^2+1}$ and let $\vartheta_i$ denote the character of $C_{q^2+1}$ so that $\vartheta_i(\widetilde{\xi_2})=\xi_2^i$. %, where $\xi_2=\exp\left(\frac{2\pi\sqrt{-1}}{q^2+1}\right)$.
Then since $\xi_2^i\neq \xi_2^{-i}$ or $\xi_2^{iq}$ for $i\neq 0$, we see that $\stab_{N}(\vartheta_i)=C.$  Hence if $\chi\in\irr(N)$ is nontrivial on $C_{q^2+1}$, then $\chi$ is uniquely determined by a constituent $\vartheta_i\times \theta$ on $C$, for $i\in I_{q^2+1}$ and
% is of the form $\vartheta\times\theta$ where $\vartheta\in\irr(C_{q^2+1}:2^2)$ with $\vartheta|_{C_{q^2+1}}=\vartheta_i+\vartheta_{-i}+\vartheta_{qi}+\vartheta_{-qi}$ for some $i\in I_{q^2+1}$ and $\theta\in\irr(Sp_2(q))$.  That is, $\chi$ is uniquely determined by a choice $i\in I_{q^2+1}$ and
$\theta\in\irr(Sp_2(q))$.  %Note that there are $q^2/4$ such $\vartheta$, which we will also denote by $\vartheta_i$, $i\in I_{q^2+1}$, by an abuse of notation.
For each choice of $\theta\in\irr(Sp_2(q))$, we also have $4$ characters of $N$ whose restriction to $C_{q^2+1}$ is trivial.

Now, let $B_0$ and $B_1$ be the principal block and the unique other unipotent $\ell$-block of $G$ of positive defect, as in \cite{white2000}, and let $\mathcal{U}(B)$ denote the unipotent characters in the block $B$.  The map $\Omega_{Q^{(2)}}$ is as follows:

\scriptsize
\begin{align*}
\Omega_{Q^{(2)}} \colon&\scriptsize \mathcal{U}(B_0) \rightarrow \irr\left(N|(1\times1)\right)\\
   &\scriptsize \mathcal{U}(B_1)\rightarrow \irr\left(N|(1\times\chi_2)\right)\\
   &\scriptsize\E_6(i)\setminus\{\chi_{15}(i), \chi_{16}(i)\} \rightarrow \irr\left(N|(1\times\chi_3(i))\right) \quad\quad i\in I_{q-1}\\
   &\scriptsize\E_7(i)\setminus\{\chi_{21}(i), \chi_{22}(i)\} \rightarrow \irr\left(N|(1\times\chi_4(i))\right)\quad\quad i\in I_{q+1}\\
   &\scriptsize\chi_{55}(i) \mapsto \irr\left(N|(\vartheta_i\times1)\right)\quad\quad i\in I_{q^2+1}\\
   &\scriptsize\chi_{56}(i)\mapsto \irr\left(N|(\vartheta_i\times\chi_2)\right)\quad\quad i\in I_{q^2+1}\\
   &\scriptsize\chi_{62}(j,i) \mapsto \irr\left(N|(\vartheta_i\times\chi_3(j))\right) \quad\quad i\in I_{q^2+1}; \quad j\in I_{q-1}\\
   &\scriptsize\chi_{65}(i,j) \mapsto \irr\left(N|(\vartheta_i\times\chi_4(j))\right)\quad\quad i\in I_{q^2+1}; \quad j\in I_{q+1}\\
\end{align*}
\normalsize

We remark that here we have used the notation of CHEVIE \cite{chevie} for the characters of $Sp_2(q)$.  Recall that we again define no map $\ast_{Q}$ in this case, as $Q\in\syl_\ell(G)$ is cyclic. 
\subsection{The Maps for $Sp_4(2^a)$}\label{sec:mapsSp4}
Here we will use the notation of \cite{white95} for blocks of $Sp_4(q)$, with $q$ even.  We will again write $\mathfrak{B}_i(J)$ for the Brauer characters in $B_i(J)$ when defining our maps $\ast_Q$.  Let $\ell|(q^2-1)$.

\noindent\begin{minipage}[b]{0.45\linewidth}
\scriptsize
\begin{align*}
\ast_{Q_1}\colon &\scriptsize \left\{\begin{array}{ccc}
                      \mathfrak{B}_9(i)&\multirow{2}{*}{$\rightarrow \irr\left(N|(\chi_{q-\epsilon}(i)\times 1)\right)$} & \epsilon=1 \\
                      \mathfrak{B}_7(i)& & \epsilon=-1\\
                    \end{array}\right.\\&\quad\quad\quad i\in I_{q+\epsilon}\\
&\scriptsize \left\{\begin{array}{ccc}
                      \mathfrak{B}_{17}(j,i)&\multirow{2}{*}{$\rightarrow \irr\left(N|(\chi_{q-\epsilon}(i)\times \varphi_j)\right)$} & \epsilon=1 \\
                      \mathfrak{B}_{17}(i,j)& & \epsilon=-1\\
                    \end{array}\right.\\&\quad\quad\quad i\in I_{q+\epsilon};\quad j\in I_{q-\epsilon}, \ell^d|j\\
\end{align*}
\end{minipage}
\hspace{.5cm}
\begin{minipage}[b]{0.45\linewidth}
\scriptsize
\begin{align*}
\Omega_{Q_1}\colon &\scriptsize \left\{\begin{array}{ccc}
                      \{\chi_9(i), \chi_{10}(i)\}&\multirow{2}{*}{$\rightarrow \irr\left(N|(\chi_{q-\epsilon}(i)\times1)\right)$} & \epsilon=1 \\
                      \{\chi_7(i), \chi_{8}(i)\}& & \epsilon=-1\\
                    \end{array}\right.\\&\quad\quad\quad i\in I_{q+\epsilon}\\
&\scriptsize \left\{\begin{array}{ccc}
                      \chi_{17}(j,i)&\multirow{2}{*}{$\mapsto \irr\left(N|(\chi_{q-\epsilon}(i)\times \varphi_j)\right)$} & \epsilon=1 \\
                      \chi_{17}(i,j)& & \epsilon=-1\\
                    \end{array}\right.\\&\quad\quad\quad i\in I_{q+\epsilon}; \quad j\in I_{q-\epsilon}\\
\end{align*}
\end{minipage}

\noindent\begin{minipage}[b]{0.45\linewidth}
\scriptsize
\begin{align*}
\ast_{Q_2}\colon &\scriptsize \left\{\begin{array}{ccc}
                      \mathfrak{B}_{13}(i)&\multirow{2}{*}{$\rightarrow \irr\left(N|(\chi_{q-\epsilon}(2i)\times 1)\right)$} & \epsilon=1 \\
                      \mathfrak{B}_{11}(i)&& \epsilon=-1
                    \end{array}\right.\\&\quad\quad\quad i\in I_{q+\epsilon}\\
&\scriptsize\left\{\begin{array}{ccc}
                      \mathfrak{B}_{16}(i)&\multirow{2}{*}{$\rightarrow \irr\left(N|(\chi_{q-\epsilon}(2i_1)\times \varphi_{2i_2})\right)$} & \epsilon=1 \\
                      \mathfrak{B}_{16}(i)& & \epsilon=-1\\
                    \end{array}\right.\\&\quad\quad\quad i=i_1(q-\epsilon)+i_2(q+\epsilon)\in I_{q^2-1}, \ell^d|i\\
\end{align*}
\end{minipage}
%\hspace{.3cm}
\begin{minipage}[b]{0.45\linewidth}
\scriptsize
\begin{align*}
\Omega_{Q_2}\colon & \scriptsize\left\{\begin{array}{ccc}
                      \{\chi_{13}(i), \chi_{14}(i)\}&\multirow{2}{*}{$\rightarrow \irr\left(N|(\chi_{q-\epsilon}(2i)\times 1)\right)$} & \epsilon=1 \\
                      \{\chi_{11}(i), \chi_{12}(i)\}&& \epsilon=-1
                    \end{array}\right.\\&\quad\quad\quad i\in I_{q+\epsilon}\\
&\scriptsize\left\{\begin{array}{ccc}
                      \chi_{16}(i)&\multirow{2}{*}{$\mapsto \irr\left(N|(\chi_{q-\epsilon}(2i_1)\times \varphi_{2i_2})\right)$} & \epsilon=1 \\
                      \chi_{16}(i)& & \epsilon=-1\\
                    \end{array}\right.\\&\quad\quad\quad i=i_1(q-\epsilon)+i_2(q+\epsilon)\in I_{q^2-1}\\
\end{align*}
\end{minipage}

\noindent\begin{minipage}[b]{0.45\linewidth}
\scriptsize
\begin{align*}
\ast_{Q_{1,1}}\colon &\scriptsize \mathfrak{B}_0\rightarrow \irr\left(N|(1\times1)\right)\\
&\scriptsize \left\{\begin{array}{ccc}
                      \mathfrak{B}_{7}(i)&\multirow{2}{*}{$\rightarrow \irr\left(N|(\varphi_i\times 1)\right)$} & \epsilon=1 \\
                      \mathfrak{B}_9(i)& & \epsilon=-1\\
                    \end{array}\right.\\&\quad\quad\quad i\in I_{q-\epsilon}, \ell^d|i\\
& \scriptsize\left\{\begin{array}{ccc}
                      \mathfrak{B}_{11}(i)&\multirow{2}{*}{$\rightarrow \irr\left(N|(\varphi_i\times \varphi_i)\right)$} & \epsilon=1 \\
                      \mathfrak{B}_{13}(i)& & \epsilon=-1\\
                    \end{array}\right.\\&\quad\quad\quad i\in I_{q-\epsilon}, \ell^d|i\\
&\scriptsize \left\{\begin{array}{ccc}
                      \mathfrak{B}_{15}(i,j)&\multirow{2}{*}{$\rightarrow \irr\left(N|(\varphi_i\times\varphi_j)\right)$} & \epsilon=1 \\
                      \mathfrak{B}_{19}(i, j)&  & \epsilon=-1\\
                    \end{array}\right.\\&\quad\quad\quad (i, j)\in I_{q-\epsilon}^{2*}, \ell^d|i,j\\
\end{align*}
\end{minipage}
%\hspace{.3cm}
\begin{minipage}[b]{0.45\linewidth}
\scriptsize
\begin{align*}
\Omega_{Q_{1,1}}\colon &\scriptsize \{\chi_1,\chi_3,\chi_4,\chi_5,\chi_6\}\rightarrow \irr\left(N|(1\times1)\right)\\
& \scriptsize\left\{\begin{array}{ccc}
                      \{\chi_7(i), \chi_{8}(i)\}&\multirow{2}{*}{$\rightarrow \irr\left(N|(\varphi_i\times 1)\right)$} & \epsilon=1 \\
                     \{\chi_9(i), \chi_{10}(i)\}& & \epsilon=-1\\
                    \end{array}\right.\\&\quad\quad\quad i\in I_{q-\epsilon}\\
&\scriptsize \left\{\begin{array}{ccc}
                      \{\chi_{11}(i), \chi_{12}(i)\}&\multirow{2}{*}{$\rightarrow \irr\left(N|(\varphi_i\times \varphi_i)\right)$} & \epsilon=1 \\
                      \{\chi_{13}(i), \chi_{14}(i)\}& & \epsilon=-1\\
                    \end{array}\right.\\&\quad\quad\quad i\in I_{q-\epsilon}\\
&\scriptsize \left\{\begin{array}{ccc}
                      \chi_{15}(i,j)&\multirow{2}{*}{$\mapsto \irr\left(N|(\varphi_i\times\varphi_j)\right)$} & \epsilon=1 \\
                      \chi_{19}(i, j)&  & \epsilon=-1\\
                    \end{array}\right.\\&\quad\quad\quad (i, j)\in I_{q-\epsilon}^{2*}\\
\end{align*}
\end{minipage}
\normalsize

\noindent Now let $\ell|(q^2+1)$, so $Q=Q^{(2)}\in\syl_\ell(H)$.

\scriptsize
\begin{align*}
\Omega_{Q^{(2)}} \colon&\scriptsize \{\chi_1,\chi_2,\chi_5,\chi_6\} \rightarrow \irr(N|1_C)\\
   &\scriptsize\chi_{18}(i) \mapsto \irr(N|\vartheta_i)\quad\quad i\in I_{q^2+1}\\
\end{align*}
\normalsize

\section{$Sp_6(2^a)$ and $Sp_4(2^a)$ are Good for the Conjectures}\label{sec:good}

In this section, we prove \prettyref{thm:Sp6good}.  We begin with a discussion regarding the automorphisms of $G=Sp_6(2^a)$ and $H=Sp_4(2^a)$.  %We begin with some propositions, which describe some properties of the maps and sets defined in \prettyref{sec:maps}.

%Let $Q$ be an $\ell$-radical subgroup of $G=Sp_6(2^a)$, where $\ell\neq 2$ is a divisor of $|G|$.  Let $\sigma_2$ be the field automorphism of $G$ induced by the Frobenius map $F_2\colon x\mapsto x^2$.  That is, $(a_{ij})^{\sigma_2} = (a_{ij}^2)$ for $(a_{ij})$ some matrix in $G$.  Then $\aut(G)=\langle G,\sigma_2\rangle$.  Let $Q$ be an $\ell$-radical subgroup of $G$.  If $Q$ is generated by diagonal matrices and matrices with entries in $\F_2$, then $Q^{\sigma_2}=Q$, and we will write $\sigma:=\sigma_2$.  Otherwise, $Q$ is conjugate in $Sp_6(\overline{\F}_q)$ to a group $D$ of this form.  Moreover, the $G$-conjugacy class of $Q$ is determined by $D$, so since $D^{\sigma_2}=D$, we know that there is some $\sigma\in\aut(G)$ so that $Q^\sigma=Q$ and $\aut(G)=\langle G,\sigma\rangle$.  For the remainder of this section, given the $\ell$-radical subgroup $Q$, $\sigma$ will denote this automorphism.

Let $Q$ be an $\ell$-radical subgroup of $G=Sp_6(2^a)$, where $\ell\neq 2$ is a divisor of $|G|$.  Let $\sigma_2$ be the field automorphism of $G$ induced by the Frobenius map $F_2\colon x\mapsto x^2$ for $x\in \overline{\F}_2$.  That is, $(a_{ij})^{\sigma_2} = (a_{ij}^2)$ for $(a_{ij})$ some matrix in $G$.  Then $\aut(G)=\langle G,\sigma_2\rangle$.  If $Q$ is generated by diagonal matrices and matrices with entries in $\F_2$, then $Q^{\sigma_2}=Q$, and we will write $\sigma:=\sigma_2$.  Otherwise, $Q$ is conjugate in $\underline{G}:=Sp_6(\overline{\F}_q)$ to a group $D$ of this form.  Moreover, the $G$-conjugacy class of $Q$ is determined by $D$.  If $Q=\langle x\rangle$ is cyclic, then $x$ is conjugate in $\underline{G}$ to a generator, $y$ for $D$.  But $y$ is also conjugate in $\underline{G}$ to $y^{\sigma_2}$, so $x$ is conjugate to $x^{\sigma_2}$ in $\underline{G}$.  But since two semisimple elements of $G$ are conjugate whenever they are conjugate in $\underline{G}$ (see, for example, the description in \cite{Luebeckthesis} of conjugacy classes of $G$), we see that $Q$ is conjugate in $G$ to $Q^{\sigma_2}$.  If $Q$ is abelian but not cyclic, we can view $Q$ as a subgroup of the product of lower-rank symplectic groups (e.g. $Q_{2,1}\leq Sp_4(q)\times Sp_2(q)$), and a similar argument on the direct factors shows that $Q$ is $G$-conjugate to $Q^{\sigma_2}$.   Finally, if $Q$ is nonabelian, then $\ell=3$ and $Q$ must be either $R$ or $P$, in which case $Q^{\sigma_2}$ must be $G$-conjugate to $Q$ since $Q$ is the unique (up to $G$-conjugacy) $\ell$-radical subgroup of its isomorphism type.

%, so since $D^{\sigma_2}=D$
Hence in any case, there is some $\sigma\in\aut(G)$ (obtained by multiplying $\sigma_2$ by an inner automorphism) so that $Q^\sigma=Q$ and $\aut(G)=\langle G,\sigma\rangle$.  Moreover, in the case that $Q$ is one of the abelian radical subgroups, viewing $Q_{r_1, r_2, r_3}$ as a subgroup of $Sp_{2(n-s)}(q)\times\prod_{i=1}^3 Sp_{2r_i}(q)$ based on the action on the basis $\{e_1,e_2,e_3,f_1,f_2,f_3\}$ as in the discussion before \prettyref{prop:radsubs}, we see that the inner automorphism can be chosen to preserve these direct factors $Sp_{2(n-s)}(q), Sp_{2r_i}(q)$.  For the remainder of this section, given the $\ell$-radical subgroup $Q$, $\sigma$ will denote a choice of such an automorphism.

Now let $H=Sp_4(2^a)$.  Then $\out(H)$ is still cyclic, generated by a graph automorphism $\gamma_2$.  Now, the action of $\gamma_2$ switches the fundamental roots of the root system of type $B_2$, and the action on the elements of $H$ can be seen from \cite[Proposition 12.3.3]{Carter1}.  We see that $\gamma_2$ satisfies $\gamma_2^2=\sigma_2$.  We may then replace $\gamma_2$ with some $\gamma$ which fixes a Sylow $\ell$-subgroup and satisfies $\gamma^2=\sigma$.

\begin{proposition}\label{prop:equivariantprop}
Let $G=Sp_6(q)$, with $q=2^a$, $\ell\neq 2$ a prime dividing $|G|$, and $Q\leq G$ a nontrivial $\ell$-radical subgroup.  Then the maps $\Omega_Q$ and $\ast_Q$ (for $\ell|(q^2-1)$) described in \prettyref{sec:maps} are $\aut(G)_Q$-equivariant.
\end{proposition}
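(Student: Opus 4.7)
The plan is as follows. By the discussion preceding the proposition, $\aut(G)_Q = \langle N_G(Q), \sigma \rangle$, so to verify equivariance of $\Omega_Q$ and $\ast_Q$ it suffices to check separately the action of inner automorphisms by $n \in N := N_G(Q)$ and the action of the field automorphism $\sigma$. The inner case is immediate: every $\chi \in \irr(G)$ is $G$-invariant, and each of the sets $\irr(N \mid \theta)$ appearing on the right-hand side of our maps is, by construction, a union of $N$-orbits of irreducible characters of $N$ lying above a prescribed constituent $\theta \in \irr(C_G(Q))$; thus $n$-conjugation stabilizes both sides. The analogous remark handles $\ast_Q$, since each $\mathfrak{B}_i(J)$ is defined through the $G$-stable Lusztig series $\E_\ell(G, (s))$, and by \prettyref{sec:preliminaries} the Brauer characters are parametrized by the basic set $\E(G, (s))$ in an $\aut(G)$-equivariant way.

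For the field automorphism $\sigma$, I would first record how $\sigma$ acts on characters of $G$, of $C := C_G(Q)$, and of $N$. After modification by an inner automorphism, $\sigma$ acts as the Frobenius $x \mapsto x^2$, and hence on the roots of unity $\zeta_j, \xi_j$ by squaring. Consequently $\sigma$ acts on character families indexed by residues modulo $q - \epsilon$, $q^2 \pm 1$, $q^3 - \epsilon$, etc., via the shift $i \mapsto 2i$; this can be read off from the CHEVIE tables \cite{chevie} for $G$ and from the explicit descriptions of the characters of $GL_r^\pm(q)$, $SL_2(q)$, and $Sp_4(q)$ recorded in \prettyref{sec:somecharsofsubgps}. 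Then, for each map in \prettyref{sec:maps} written schematically as $\E_i(J) \to \irr(N \mid \theta(J))$ (and analogously for $\ast_Q$), I would verify that the $\sigma$-orbit on the indices $J$ labelling the left side agrees with the $\sigma$-orbit of the indices labelling $\theta(J)$ on the right. This is a direct comparison, since the maps were constructed so that the indices on both sides coincide up to a fixed, $\sigma$-equivariant reparametrization such as $i \mapsto 2i$ or $i \mapsto (q^2 + \epsilon q + 1)i$.

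For families not involving roots of unity in $\F_q$, namely the unipotent characters in $\E_1$ and $\mathcal{U}(B_j)$, together with the Gallagher extensions $1^{(\pm 1)}$ and the Steinberg character $\varsigma$ of $Sp_2(3)$, the action of $\sigma$ is trivial and these cases require no further argument. For the $\ell = 3$ radical subgroups $P$ and $R$, where the partitions $\mathfrak{B}_0 = \mathfrak{B}_0(Q_{1,1,1}) \cup \mathfrak{B}_0(P) \cup \mathfrak{B}_0(R)$ and $\mathfrak{B}_8(i) = \mathfrak{B}_8(i, P) \cup \mathfrak{B}_8(i, R)$ (similarly for $\mathfrak{B}_9(i)$) were chosen arbitrarily, I would select these partitions to be $\sigma$-stable, which is possible because $\sigma$ permutes $\sigma$-orbits of Brauer characters within each block and the required cardinalities of the parts are compatible with such a choice.

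The principal obstacle is the bookkeeping required to traverse every radical subgroup $Q$ and every family listed in \prettyref{sec:maps}, and to check in each case that the indices appearing in $\chi_{q-\epsilon}(2i)$, $\varphi_{2i_1}$, $\phi_{(q^2+\epsilon q+1)i}$, etc.\ on the right match the $\sigma$-action on $\E_i(J)$ or $\mathfrak{B}_i(J)$ on the left. A secondary subtlety arises for the $C_2 \wr S_2$- and $S_3$-type extensions appearing in $N_G(Q_{1,1})$ and $N_G(Q_{1,1,1})$, where several Gallagher extensions of a single character of the index-$2$ or index-$6$ subgroup must be matched correctly; here the key point is that $\sigma$ commutes with the wreath-product structure of $N$, so permutes or fixes each Gallagher extension in the manner dictated by its action on the quotient $N/L$.
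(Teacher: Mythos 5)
Your proposal follows essentially the same route as the paper's own proof: reduce to the field automorphism $\sigma$, compute its index-doubling action on the families $\E_i(J)$ (resp. $\mathfrak{B}_i(J)$) of $G$ and on the constituents of characters of $N_G(Q)$ restricted to $C_G(Q)$, observe that $\sigma$ commutes with the $S_{c_i}$- and $C_2$-actions so the Gallagher extensions are matched, and treat $P$ and $R$ separately. Your explicit remark that the arbitrary partitions $\mathfrak{B}_0=\mathfrak{B}_0(Q_{1,1,1})\cup\mathfrak{B}_0(P)\cup\mathfrak{B}_0(R)$ and $\mathfrak{B}_8(i)=\mathfrak{B}_8(i,P)\cup\mathfrak{B}_8(i,R)$ should be chosen compatibly with $\sigma$ (so that $\mathfrak{B}_8(i,R)^\sigma=\mathfrak{B}_8(2i,R)$, etc.) is a worthwhile precision that the paper leaves implicit.
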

\begin{proof}

Let $\chi\in\underline{\irr_0}(G|Q)$ (resp. $\psi\in\ibr_\ell(G|Q)$) as defined in \prettyref{sec:maps}. Since $\mathrm{Out}(G)=\aut(G)/G\cong C_a$ is cyclic, it suffices to show that $(\Omega_Q(\chi))^\sigma=\Omega_Q(\chi^\sigma)$ (resp. $(\psi^{\ast_Q})^{\sigma}=(\psi^\sigma)^{\ast_Q}$) for a generator, $\sigma$, of $\mathrm{Out}(G)$.  In particular, let $\sigma$ be an automorphism of $G$ of the form described above and note that we can write $\sigma=y\sigma_2$ for some $y\in G$.

(1) Note that $\sigma$ fixes the unipotent classes of $G$.   Now, a semisimple class of $G$ is determined by its eigenvalues (possibly in an extension field of $\F_q$) on the action of the natural module $\langle e_1, e_2, e_3, f_1, f_2, f_3\rangle_{\F_q}$ of $G$.  Hence, as the action of $\sigma$ on semisimple classes of $G$ is to square the eigenvalues, we see that $\sigma$ sends the class $C_i(j_1, j_2, j_3)$ of $G$ (in the notation of CHEVIE $\cite{chevie}$, with the possibility of some of the indices $j_k$ not appearing, e.g. in the case of the unipotent classes which have no indices, or $C_{13}(j_1)$ which has only one index) to the class $C_i(2j_1, 2j_2, 2j_3)$ (which we mention is also equal to the class $C_i(-2j_1, -2j_2, -2j_3))$.

Let $\theta\in\irr(G)$.  Then $\theta^\sigma(g)=\theta(g^{\sigma^{-1}})$ for $g\in G$. From the observations in the above paragraph and careful inspection of the character values for irreducible characters of $G$ in CHEVIE and \cite{Luebeckthesis}, we see that the character $\chi_i(j_1, j_2, j_3)$ (again in the notation of CHEVIE) is mapped under $\sigma$ to $\chi_i(2j_1, 2j_2, 2j_3)$.  (Note here that the action of $\sigma$ on unipotent and semisimple classes is enough in this situation to determine the action on the character.)  That is, $\sigma$ preserves the family of a character, and in the notation of \prettyref{sec:maps}, $\E_i(J)^\sigma=\E_i(2J)$, where $2J=(2j_1,..,2j_k)$ for an indexing set $J=(j_1,...,j_k)$.  We note that this can also be seen using similar considerations for the action of the corresponding automorphism of $G^\ast\cong G$ on semisimple elements of $G^\ast$, together with the knowledge of the action of automorphisms on unipotent characters (see, for example \cite[Proposition 2.5]{malleheightzero}) and properties of Lusztig's parameterization of characters (see \cite[Theorem 7.1(vi)]{dignemichel90}).

%///or maybe argue $\E(G, (g_i(J)))^\sigma=\E(G, (g_i(J))^\sigma)=\E(G, (g_i(2J)))$?

Now, as discussed in \prettyref{sec:preliminaries}, the set $\E_i(J)$ forms a basic set for the block $B_i(J)$, and we may write $\psi\in \mathfrak{B}_i(J)$ as a $\Z$-linear combination $\psi=\sum a_\theta \wh{\theta}$ of the $\wh{\theta}$ for $\theta\in\E_i(J)$, so noting that $\psi^\sigma=\left(\sum a_\theta \wh{\theta}\right)^\sigma=\sum a_{\theta} \wh{\theta^\sigma}$, we see that $\mathfrak{B}_i(J)^\sigma=\mathfrak{B}_i(2J)$, with the character families preserved.  Also, note that both Brauer and ordinary characters of unipotent blocks of $G$ are fixed under $\sigma$.

(2) Now, since $\sigma$ preserves $Q$, we see that $\sigma$ induces an automorphism on $N=N_G(Q)$ and $C=C_G(Q)$.  Let $Q$ be one of the abelian radical subgroup, $Q=Q_{r_1, r_2, r_3}$ so $C=Sp_{2(n-s)}(q)\times\prod_{i=1}^3 GL_{r_i}^\epsilon(q)\leq Sp_{2(n-s)}(q)\times\prod_{i=1}^3 Sp_{2r_i}(q)$, and $\sigma=y\sigma_2$ can be chosen with $y$ preserving the factors $Sp_{2(n-s)}(q), Sp_{2r_i}(q)$, as discussed above, and hence $\sigma$ preserves the direct factors $Sp_{2(n-s)}(q), GL_{r_i}^\epsilon(q)$ of $C$.  Moreover, $N$ contains a normal subgroup $C\leq M=Sp_{2(n-s)}(q)\times\prod_{i=1}^3 GL_{r_i}^\epsilon(q):2\leq Sp_{2(n-s)}(q)\times\prod_{i=1}^3 Sp_{2r_i}(q)$, whose direct factors are again preserved by $\sigma$.

The fusion of semisimple classes of direct factors of $C$ of the form $GL_1^\epsilon(q), SL_2(q), Sp_4(q)$, and $GL_3^\epsilon(q)$ into $G$ is clear from the eigenvalues, so we see that the action of $\sigma$ on these classes is the same as the field automorphism induced by the map $\sigma_2\colon x\mapsto x^2$, as in the case for $G$.  This yields that the action of $\sigma$ on the irreducible ordinary characters of the factors $Sp_4(q), SL_2(q)$, and $GL_3^\pm(q)$ of $C$ that we require in the descriptions of $\dz(N/Q)$ and $\underline{\irr_0}(N|Q)$ is analogous to the action on $\irr(G)$.  That is, these characters are indexed in a similar fashion $\{\chi_i(j_1, j_2, j_3)\}$ in CHEVIE, and we have $\chi_i(j_1, j_2, j_3)^\sigma=\chi_i(2j_1, 2j_2, 2j_3)$.  Similarly, the character $\varphi_i$ of a factor $GL_1(q)$ in $C$ is mapped to $\varphi_{2i}$, the character $\phi_i$ of a factor $GL_1^\epsilon(q^3)\cong C_{q^3-\epsilon}$ (in the case $Q=Q^{(3)}$) is mapped to $\phi_{2i}$, and the character $\vartheta_i$ of a factor $C_{q^2+1}$ (in the case $Q=Q^{(2)}$) is mapped to $\vartheta_{2i}$.  Hence in the situations where $Q$ is abelian and $\Omega_Q(\chi)$ (resp. $\psi^{\ast_Q}$) is one of the characters of $N$ uniquely determined by a constituent on $C$, we see that $(\Omega_Q(\chi))^\sigma=\Omega_Q(\chi^\sigma)$ (resp. $(\psi^{\ast_Q})^{\sigma}=(\psi^\sigma)^{\ast_Q}$), as desired.

(3) Moreover, when $Q=Q_{r_1, r_2, r_3}$, because $\sigma$ preserves the direct factors of $M$, we see that it commutes with the $S_{c_i}$- action (recall here that $N=Sp_{2(n-s)}(q)\times\prod \left(GL_{r_i}^\epsilon(q):2\right)\wr S_{c_i}$, where the product is now taken over the $i$ so that each distinct $r_i$ appears only once) and the $C_2$-actions on the individual $GL_{r_i}^\epsilon(q)$'s.  When $Q=Q^{(2)}$ or $Q^{(3)}$, since $\sigma$ leaves $C$ invariant, we see from the action of $\sigma$ on semisimple elements of $G$ that $\sigma$ squares elements of the cyclic direct factor of $C$ and commutes with the generators of $N/C$.  Hence in the case that $Q$ is abelian and a constituent on $C$ does not uniquely determine the character of $N$, $\sigma$ fixes the choice of extension and we again have $(\Omega_Q(\chi))^\sigma=\Omega_Q(\chi^\sigma)$ (resp. $(\psi^{\ast_Q})^{\sigma}=(\psi^\sigma)^{\ast_Q}$).

 %From the description of the action of $\sigma$ on semisimple and unipotent classes of $G$, we see that $\sigma$ squares the elements of $GL_1^\epsilon(q)$ and commutes with $\tau$.  Hence when it occurs in $N$, the character $\widetilde{\varphi_i}$ of $GL_1^\epsilon(q):2$ with is mapped under $\sigma$ to the character $\widetilde{\varphi}_{2i}$.  (When $i=0$, the choice $\pm1$ of extension is fixed as well.)
 %Consider the characters $\varphi_i, 1^{(1)}, 1^{(2)}\in\irr(C_{q-\epsilon}:2)$ in the notation of \prettyref{sec:dzN(Q)}.  From the description of $\sigma$'s action on semisimple and unipotent classes of $G$, we see that $\sigma$ squares elements of $C_{q-\epsilon}$ and fixes the order-2 complement in $C_{q-\epsilon}:2$ (which induces the map $c\mapsto c^{-1}$ on $C_{q-\epsilon}$).  Hence the action on $\varphi_i$ is $\varphi_i^\sigma=\varphi_{2i}$, and $\sigma$ fixes $1^{(1)}$ and $1^{(2)}$.
%Similarly, the action of $\sigma$ on $\irr(C_{q^3-\epsilon}:6)$ (resp. $\irr(C_{q^2+1}:2^2)$) is to send the character with $\phi_i$ (resp. $\vartheta_i$) as a constituent to the one with $\phi_{2i}$ (resp. $\vartheta_{2i}$) as a constituent and fix the characters with $i=0$, since $\sigma$ squares elements of $C_{q^3-\epsilon}$ or $C_{q^2+1}$ and commutes with the action of the generators of the order-$6$ or $4$ complement. %, whose entries lie in $\F_2$.

(4) %When $3\neq\ell|(q^2-1)$ or $Q=Q^{(3)}, Q^{(2)}$,
The observations in $(1)-(3)$ imply that $(\Omega_Q(\chi))^\sigma=\Omega_Q(\chi^\sigma)$ (resp. $(\psi^{\ast_Q})^{\sigma}=(\psi^\sigma)^{\ast_Q}$) for our choice of generator $\sigma$, except possibly when $Q= P$ or $R$ and $\ell=3|(q^2-1)$.

Now, when $Q=P$, choosing $\sigma$ as in the case $Q=Q_{1,1,1}$, the discussion on height-zero characters of $N$, combined with (2),(3) and the fact that $\sigma$ commutes with the action of the $S_3$-subgroup of $N$ %is fixed by $\sigma$ since it is comprised of elements whose entries lie in $\F_2$,
yields that the character $
\theta(i,j)$ of $J$ (in the notation from \prettyref{sec:dzN(Q)}) is mapped to $\theta(2i,2j)$ under $\sigma$ and that the choice of extensions are fixed.  Hence again in this case, the maps are equivariant.

Finally, let $Q=R$.  Since $\Omega_R$ is trivial, we need only consider the map $\ast_R$, and therefore the members of $\dz(N_G(R)/R)$.  By \prettyref{sec:dzN(Q)}, this set is comprised of the characters $\widetilde{\varphi}_i\varsigma$ with $3^d|i$, where $\varsigma\in \irr(Sp_2(3))$ is the Steinberg character and $\widetilde{\varphi}_i=\widetilde{\varphi}_{-i}$ is the character whose restriction to $C_{q-\epsilon}$ contains $\varphi_i$ as a constituent.  Now, $\varsigma$, $1^{(1)}$, and $1^{(-1)}$ are fixed by $\sigma$, and $\varphi_i^\sigma=\varphi_{2i}$ as before, so we see that $\ast_R$ is again equivariant.
\end{proof}

\begin{proposition}\label{prop:equivariantprop2}
Let $H=Sp_4(q)$, with $q=2^a$, $\ell\neq 2$ a prime dividing $|H|$, and $Q\leq H$ a nontrivial $\ell$-radical subgroup.  Then the maps $\Omega_Q$ and $\ast_Q$ (for $\ell|(q^2-1)$) described in \prettyref{sec:mapsSp4} are $\aut(H)$-equivariant.
\end{proposition}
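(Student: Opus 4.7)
The plan is to mirror the proof of \prettyref{prop:equivariantprop} for $G=Sp_6(q)$, making the necessary modifications to accommodate the graph automorphism $\gamma$ of $H=Sp_4(2^a)$. Since $\mathrm{Out}(H)=\langle\gamma\rangle$ is cyclic, generated by the graph automorphism $\gamma$ with $\gamma^2=\sigma$ (the field-squaring automorphism), it suffices to check that $(\Omega_Q(\chi))^\gamma=\Omega_Q(\chi^\gamma)$ and $(\psi^{\ast_Q})^\gamma=(\psi^\gamma)^{\ast_Q}$ for a single generator. Recall from the preceding discussion that $\gamma$ can be chosen to stabilize each radical subgroup $Q$ (in particular, to fix a Sylow $\ell$-subgroup), by modifying by an appropriate inner automorphism. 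I would first establish $\sigma$-equivariance using the verbatim analogue of steps (1)--(3) of the proof of \prettyref{prop:equivariantprop}: $\sigma$ squares eigenvalues on the natural module, sending $\chi_i(J)\mapsto\chi_i(2J)$ in CHEVIE notation; the basic set property promotes this to Brauer characters; and the same squaring describes the action of $\sigma$ on the relevant characters $\varphi_i$ of $GL_1^\pm(q)$, $\chi_{q-\epsilon}(i)$ of $Sp_2(q)$, and $\vartheta_i$ of $C_{q^2+1}$, so that the maps $\Omega_Q,\ast_Q$ of \prettyref{sec:mapsSp4} are seen to commute with $\sigma$ by direct inspection of their index transformations.

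The new ingredient is the action of $\gamma$. I would appeal to \cite[Proposition 12.3.3]{Carter1} to write $\gamma$ explicitly on the Chevalley generators; because $\gamma$ swaps the two fundamental roots of the $B_2 = C_2$ system and satisfies $\gamma^2=\sigma$, its effect on the rational maximal tori (hence on the semisimple conjugacy classes indexing each family $\E_i(J)$) is an explicit order-$2a$ permutation determined by the ``half-Frobenius" twisting. Using Lusztig's parameterization together with the knowledge of the action of automorphisms on unipotent characters (\cite[Proposition 2.5]{malleheightzero} combined with \cite[Theorem 7.1(vi)]{dignemichel90}), one then reads off how $\gamma$ acts on the character families $\chi_i(J)$ of $H$ listed in \prettyref{sec:mapsSp4}. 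The key observation to verify is that, for each family appearing as a source or target in $\Omega_Q$ or $\ast_Q$, this action is compatible with the parallel action of $\gamma$ on the constituents of $C_H(Q)$ described below.

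For each of the radical subgroups $Q\in\{Q_1,Q_2,Q_{1,1},Q^{(2)}\}$, the centralizer $C_H(Q)$ is a direct product whose factors are among $Sp_2(q)$, $GL_r^\pm(q)$ (for $r\leq2$), and the cyclic torus $C_{q^2+1}$. Since $\gamma$ normalizes $Q$ and the chosen Levi embedding of $C_H(Q)$, I would track its induced action on each such factor: on $GL_1^\pm(q)$ it sends $\varphi_i\mapsto\varphi_{i'}$ for some $i'$ determined by the character-theoretic duality induced by the graph automorphism; on $Sp_2(q)$ (viewed as $SL_2(q)$) $\gamma$ restricts to an inner automorphism (as $Sp_2(q)$ has no graph automorphism in this characteristic), so all characters $\chi_{q-\epsilon}(j)$ are fixed up to re-indexing; and on $C_{q^2+1}$ the action is again by a power prescribed by $\gamma^2=\sigma$. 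Combining these with the description of $\irr_0(N_H(Q)|Q)$ and $\dz(N_H(Q)/Q)$ from analogues of \prettyref{sec:dzN(Q)}, one checks in each case that the unique extensions of constituents to $N_H(Q)$, and the pairs $\{1^{(1)},1^{(-1)}\}$ of Gallagher-extended characters, are either fixed or swapped simultaneously with the corresponding characters of $H$, yielding the required equivariance.

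The hard part will be the bookkeeping for $\gamma$: unlike $\sigma$, which acts uniformly by squaring of indices and therefore commutes trivially with the labeling scheme, the exceptional $B_2$--$C_2$ graph automorphism may mix the short-root and long-root families of characters of $Sp_4(q)$ in a way that is not manifestly compatible with the index conventions of \prettyref{sec:mapsSp4}. Resolving this should be a case-by-case inspection of CHEVIE's tables (cross-referenced with the description of $\gamma$ on semisimple classes), together with the observation that the Bonnafé--Rouquier correspondence of \prettyref{lem:moritaequiv} intertwines $\gamma$ with the analogous automorphism of each $L\leq H$, reducing everything to the already-documented action on unipotent characters of the Levi subgroups $GL_2^\pm(q)$ and $Sp_2(q)\times GL_1^\pm(q)$.
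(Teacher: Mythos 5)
There is a genuine gap, and it is a structural one: you assume that ``$\gamma$ can be chosen to stabilize each radical subgroup $Q$'' by adjusting by an inner automorphism, and your entire case analysis for $Q\in\{Q_1,Q_2,Q_{1,1},Q^{(2)}\}$ proceeds under that assumption (``Since $\gamma$ normalizes $Q$ and the chosen Levi embedding of $C_H(Q)$\dots''). This is false, and the paper's proof hinges on exactly the opposite fact: the graph automorphism $\gamma$ of $Sp_4(2^a)$ \emph{interchanges} the $H$-conjugacy classes of $Q_1$ and $Q_2$. These are distinct classes of radical subgroups (they sit in Levi subgroups attached to a long-root torus and a short-root torus respectively), so no inner twist can make $\gamma$ fix both. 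The discussion preceding the proposition only guarantees that $\gamma$ can be chosen to fix a Sylow $\ell$-subgroup (namely $Q_{1,1}$ when $\ell\mid q^2-1$, or $Q^{(2)}$ when $\ell\mid q^2+1$), not the proper radical subgroups.

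Because of this, the statement being proved is $\aut(H)$-equivariance, not $\aut(H)_Q$-equivariance as in \prettyref{prop:equivariantprop}: the condition one must verify at $Q_1$ reads $\Omega_{Q_1}(\chi)^\gamma=\Omega_{Q_1^\gamma}(\chi^\gamma)=\Omega_{Q_2}(\chi^\gamma)$, which ties together the two maps $\Omega_{Q_1}$ and $\Omega_{Q_2}$ (and similarly $\ast_{Q_1}$ with $\ast_{Q_2}$) rather than checking each in isolation. The paper does this concretely, by reading off from CHEVIE that $\gamma$ sends $C_7(i)\leftrightarrow C_{11}(\cdot)$, $C_9(i)\leftrightarrow C_{13}(\cdot)$, hence $B_7\leftrightarrow B_{11}$ and $B_9\leftrightarrow B_{13}$, and that the constituent $(\varphi_i\times\chi_{q-\epsilon}(j))$ of $C_H(Q_1)$ is carried to the like-named constituent of $C_H(Q_2)$, with the squaring only appearing after applying $\gamma$ twice. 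A ``verbatim analogue of steps (1)--(3)'' of the $Sp_6$ proof cannot capture this cross-coupling. Relatedly, your assertion that on $Sp_2(q)$ ``$\gamma$ restricts to an inner automorphism'' is not the right picture: $\gamma$ does not stabilize the embedded $Sp_2(q)$ factor of $C_H(Q_1)$, it moves it to the $SL_2$ inside $GL_2^\epsilon(q)=C_H(Q_2)$. Finally, the proof also needs the index changes $C_{15}(i,j)\mapsto C_{15}(i+j,i-j)$, $C_{16}\leftrightarrow C_{17}$, and $C_{18}(i)\mapsto C_{18}((q+1)i)$, which are $B_2/C_2$-specific phenomena not derivable from the ``square all indices'' pattern you invoke; these have to be extracted directly from the explicit action of $\gamma_2$ on classes, as the paper does.
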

\begin{proof}
Again, it suffices to show that $\ast_Q$ and $\Omega_Q$ commute with the generator $\gamma$ of $\out(H)$.  We will use the notation of classes and characters from CHEVIE, \cite{chevie}.  From comparing notation of CHEVIE, \cite{enomoto72}, and \cite{Carter1}, we deduce that the action of $\gamma$ on the unipotent classes of $H$ is to switch $C_2$ and $C_3$ and fix the other unipotent classes.  Moreover, $C_7(i)^\gamma=C_{11}(i)$ and $C_{11}(i)^\gamma=C_{7}(2i)$.  Similarly, $C_9(i)^\gamma=C_{13}(i)$ and $C_{13}(i)^\gamma=C_{9}(2i)$.  Hence $\gamma$ switches $Q_1$ and $Q_2$.  Also, $C_{15}(i,j)^\gamma=C_{15}(i+j,i-j)$, $C_{19}(i,j)^\gamma=C_{19}(i+j,i-j)$, and $Q_{1,1}$ is stabilized by $\gamma$.  Moreover, $C_{17}(i,j)=C_{16}(i(q+1)+j(q-1))$,  $C_{16}(i)^\gamma=C_{17}(i\mod(q-1), j\mod(q+1))$, and $C_{18}(i)^\gamma=C_{18}((q+1)i)$.

From this, using the character table for $H$ in CHEVIE \cite{chevie}, we can see the action of $\gamma$ on the relevant characters (and blocks) of $H$.  Namely, $B_7(i)^\gamma=B_{11}(i)$, $B_{11}(i)^\gamma=B_{7}(2i)$, $B_9(i)^\gamma=B_{13}(i)$, $B_{13}(i)^\gamma=B_{9}(2i)$, $B_{15}(i,j)^\gamma=B_{15}(i+j,i-j)$,  $B_{19}(i,j)^\gamma=B_{19}(i+j,i-j)$, and $\chi_{18}(i)^\gamma=\chi_{18}((q+1)i)$.  Also, $B_0$ is fixed, except that $\chi_3$ and $\chi_4$ are switched.

Let ${\varphi}_i$ for $i\in I_{q-\epsilon}$ be as usual.  Considering the action of $\gamma$ on elements of $N_H(Q_1)$ and $N_H(Q_2)$, we see that the characters $({\varphi_i}\times \chi_{q-\epsilon}(j))$ of $C_H(Q_1)$ are mapped under $\gamma$ to the corresponding character $({\varphi_i}\times \chi_{q-\epsilon}(j))$ in $C_H(Q_2)$.  Applying $\gamma$ again yields $({\varphi_{2i}}\times \chi_{q-\epsilon}(2j))$ in $C_H(Q_1)$.  Moreover, for $\nu\in\{\pm1\}$, $(1^{(\nu)}\times \chi_{q-\epsilon}(j))\in\irr(N_H(Q_1))$ is mapped to the corresponding character $(1\times \chi_{q-\epsilon}(j))^{(\nu)}\in\irr(N_H(Q_2))$, which is then mapped to $(1^{(\nu)}\times \chi_{q-\epsilon}(2j))\in\irr(N_H(Q_1))$.

Inspecting the values of the characters of $N_H(Q_{1,1})/C_H(Q_{1,1})\cong C_2\wr S_2$, we see that they are fixed under $\gamma$, aside from $(1^{(1)}\times 1^{(1)})^{(-1)}$ and $(1^{(-1)}\times 1^{(-1)})^{(1)}$, which are switched.
\begin{comment}
(This can be seen because the $H$-conjugacy classes containing $\left(\begin{array}{cc}
                                                    0   & I_2 \\
                                                    I_2 & 0
                                                  \end{array}
\right)$ and $\left(\begin{array}{ccc}
                      0 & 1 &  \\
                      1 & 0 &  \\
                       &  & I_2
                    \end{array}
\right)$ are switched.)
\end{comment}
So, choosing $\{\chi_3,\chi_4\}\rightarrow \{(1^{(1)}\times1^{(1)})^{(-1)}, (1^{(-1)}\times 1^{(-1)})^{(1)}\}$, we see that this is consistent with our maps.

Also, the characters $\theta$ of $N_H(Q_{1,1})$ which are nontrivial on $C_H(Q_{1,1})$ satisfy that if $(\varphi_i\times \varphi_j)$ is a constituent of $\theta|_{C_H(Q_{1,1})}$, then $(\varphi_{i+j}\times \varphi_{i-j})$ is a constituent of $\theta^\gamma|_{C_H(Q_{1,1})}$, where $i,j\in I_{q-\epsilon}\cup\{0\}$, and $\varphi_0:=1_{C_{q-\epsilon}}$.  Moreover, in the case $i=0$, the action on $C_2\wr S_2$ yields that the choice of extension is fixed under $\gamma$ (i.e. $(\widetilde{\varphi_i}\times 1^{(\nu)})^\gamma=(\widetilde{\varphi_i}\times\widetilde{ \varphi_i})^{(\nu)}$ where $\nu\in\{\pm1\}$).

Finally, $\vartheta_i\in\irr\left(C_G(Q^{(2)})\right)$ is mapped under $\gamma$ to $\vartheta_{(q+1)i}$, and when $i=0$ the choice of extension to $N_G(Q^{(2)})$ is fixed by $\gamma$.

Altogether, these discussions yield that $(\psi^{\ast_Q})^\gamma=(\psi^{\gamma})^{\ast_Q}$ for each $\psi\in\ibr_\ell(H|Q)$, as desired, and similar for $\Omega_Q$.
\end{proof}

\begin{proposition}\label{prop:inducedblockprop}
Let $G=Sp_6(2^a)$ or $Sp_4(2^a)$, $\ell$ an odd prime dividing $|G|$, and $Q$ a nontrivial $\ell$-radical subgroup of $G$.  Let the sets $\underline{\irr_0}(G|Q), \underline{\irr_0}(N_G(Q)|Q),$ and $\ibr_\ell(G|Q)$ and the maps $\Omega_Q$, $\ast_Q$ be as described in \prettyref{sec:maps}.  Then
\begin{itemize}
\item If $\chi\in\underline{\irr_0}(G|Q)$ with $B=\mathrm{Bl}(G|\chi)$ and $b= \mathrm{Bl}(N_G(Q)|\Omega_Q(\chi))$, then $b^G=B$.
\item If $\psi\in\ibr_\ell(G|Q)$ with $B=\mathrm{Bl}(G|\psi)$ and $b= \mathrm{Bl}(N_G(Q)|\psi^{\ast_Q})$, then $b^G=B$.
\end{itemize}
\end{proposition}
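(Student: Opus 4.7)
The plan is to verify the two block-correspondence statements case-by-case on the radical subgroups catalogued in Proposition \ref{prop:radsubs}, using two complementary tools. First, when $Q$ is itself a defect group of the block $B = \mathrm{Bl}(G|\chi)$ or $\mathrm{Bl}(G|\psi)$, Brauer's First Main Theorem guarantees a unique block $b \in \mathrm{Bl}(N_G(Q))$ with defect group $Q$ and $b^G = B$. In this situation, the task reduces to checking that $\Omega_Q(\chi)$ or $\psi^{\ast_Q}$ actually lies in $b$. Since the constructed characters in Section \ref{sec:dzN(Q)} have the prescribed $\ell$-part of degree $\chi(1)_\ell = |N_G(Q)/Q|_\ell$ (i.e., they are height-zero with defect group $Q$, or defect-zero of $N_G(Q)/Q$), and their constituents on $C_G(Q)$ are prescribed members of specific unipotent/Lusztig blocks of the Levi factors $Sp_{2(n-s)}(q) \times \prod GL_{r_i}^\epsilon(q)$ (and similar for $Q^{(2)}, Q^{(3)}$), the block $b$ is determined by these constituents through the block structure of direct products.

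Second, for the identification $b^G = B$, I would combine the Bonnaf\'e--Rouquier Morita equivalence (\prettyref{lem:moritaequiv}) with the description of centralizers (\prettyref{lem:centralizersemisimpleSp}). The block $B = \mathrm{Bl}(G|\chi)$ lies in a Lusztig series $\mathcal{E}_\ell(G,(s))$ for an $\ell'$-semisimple class $(s)$ read off from the family $\mathcal{E}_i(J)$ containing $\chi$. The block $b = \mathrm{Bl}(N_G(Q)|\Omega_Q(\chi))$ is determined, via Clifford theory, by its constituent on $C_G(Q)$, which by construction lies in a Lusztig series of the appropriate Levi component. Because both ascriptions of ``semisimple $\ell'$-label'' — the one attached to $\chi$ through $\mathcal{E}_i(J)$ and the one attached to $\Omega_Q(\chi)$ through the linear characters $\varphi_i$, $\phi_i$, $\vartheta_i$ on the cyclic direct factors of $C_G(Q)$ — are given by the same indexing set $J$, the two semisimple classes agree up to $G$-conjugacy. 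Translating this agreement into the central-character equality $\lambda_b^G(\mathcal{K}^+) = \lambda_B(\mathcal{K}^+)$ via the formula $\lambda_b^G(\mathcal{K}^+) = \lambda_b((\mathcal{K}\cap N_G(Q))^+)$ then yields $b^G = B$. The same argument applies to the Brauer-character maps $\ast_Q$, since by the discussion in Section \ref{sec:preliminaries} each $\mathcal{E}_i(J)$ is a basic set for its block, so $\psi$ and $\chi \in \mathcal{E}_i(J)$ lie in the same block, reducing the Brauer-character statement to the ordinary one.

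Third, I would treat the special subgroups arising for $\ell = 3$ separately. For $Q = Q^{(3)}$ (cyclic Sylow) and $Q = Q^{(2)}$, the Sylow subgroup is cyclic so the block theory is Brauer-tree theoretic; the partition of characters in \prettyref{sec:dzN(Q)} has been arranged to match the block distribution available from White \cite{white2000} and the Bonnaf\'e--Rouquier reduction. For $Q = P, R$ (when $\ell = 3 \mid q^2-1$), the key point is that $R$ is not a defect group of any block (which I would establish by computing the central character images $\omega_\chi^\ast$ on classes meeting $R$ and showing they coincide with those of blocks already possessing $P$ or $Q_{1,1,1}$ as defect group), and hence $\ast_R$ only needs to send Brauer characters from $B_0, B_8(i), B_9(i)$ to characters of $N_G(R)/R$ whose central characters match. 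The partitions $\mathfrak{B}_0 = \mathfrak{B}_0(Q_{1,1,1}) \cup \mathfrak{B}_0(P) \cup \mathfrak{B}_0(R)$ (and similar for $\mathfrak{B}_8, \mathfrak{B}_9$) are chosen precisely with this flexibility in mind: since $\Omega_Q$ and $\ast_Q$ for $Q = P, R$ are defined by trivial (or Steinberg-twisted) central characters on the $C_m$-factor of $N_G(Q)/Q$, the induced block computation via $\lambda^\ast$ matches that of the principal or the $B_8/B_9$ blocks of $G$.

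The main obstacle will be the $\ell = 3$ case with non-abelian $P$ and $R$: one must verify that the Brauer correspondence $b^G = B$ holds not just nominally via basic-set reasoning, but at the level of central characters on all relevant conjugacy classes, and simultaneously that the partition of $\mathfrak{B}_0, \mathfrak{B}_8(i), \mathfrak{B}_9(i)$ is admissible in the sense that the correct number of Brauer characters is apportioned to each defect group. Once the cardinality counts are matched using White's decomposition data and the explicit structure of $N_G(P), N_G(R)$ above, the central-character computation reduces to the fact that the faithful linear characters $\varphi_i$ with $3^d \mid i$ all have the same $\ast$-image on $Q_3$-type elements, so the $\lambda_b^\ast$ images agree across the partition.
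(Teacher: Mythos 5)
Your plan diverges from the paper's actual proof, which does \emph{not} use Brauer's First Main Theorem nor attempt a structural identification of ``semisimple labels.'' Instead, the paper reduces everything to the defining equality $\lambda_B(\mathcal{K}^+)=\lambda_b((\mathcal{K}\cap C)^+)$ for all conjugacy classes $\mathcal{K}$ (via \cite[Lemma 15.44]{isaacs}) and verifies this equality directly: the central characters $\omega_\chi$ of $G$ are pulled from CHEVIE, the values of the relevant characters of $N_G(Q)$ on $C_G(Q)$ are read off from the character tables of $SL_2(q)$, $Sp_4(q)$, $GL_3^\pm(q)$, and the class fusion of $C$ into $G$ pins down $\omega_\varphi((\mathcal{K}\cap C)^+)$. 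This is a brute-force but complete argument, and the paper writes it out in full for the hardest case $Q=R$, $\ell=3$.

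There is a genuine gap in your approach at the step where you write ``Translating this agreement into the central-character equality $\lambda_b^G(\mathcal{K}^+)=\lambda_B(\mathcal{K}^+)\dots$ then yields $b^G=B$.'' The fact that $\chi$ and $\Omega_Q(\chi)$ carry the same indexing $J$ on their constituents does not, by itself, prove the central-character equality; that implication is precisely what the proposition is asserting and needs to be verified. What you are implicitly appealing to is a theorem of the form ``for finite reductive groups, Brauer correspondence of blocks respects the $\ell'$-semisimple label'' (results in the spirit of Brou\'e--Michel, Cabanes--Enguehard, or Fong--Srinivasan), but that is a deep input which you neither cite nor re-prove, and which the paper deliberately avoids by working directly with $\lambda^\ast$. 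Moreover, your invocation of Bonnaf\'e--Rouquier is slightly misplaced: that theorem provides a Morita equivalence between a block of the Levi $L=\underline{L}^F$ and a block of $G$; since $N_G(Q)$ is \emph{not} a Levi subgroup (it is a normalizer with an extra Weyl-like factor), the equivalence does not directly describe the relationship between blocks of $N_G(Q)$ and those of $G$, which is what block induction $b^G=B$ concerns. Finally, the treatment of the non-abelian subgroups $P$ and $R$ when $\ell=3$ remains impressionistic; the paper must, and does, compute $\omega_{\chi}(\mathcal{K}^+)^\ast$ and $\omega_\varphi((\mathcal{K}\cap C)^+)^\ast$ explicitly on the classes $C_{25}(j)$ or $C_{28}(j)$ to see that both come out to $\overline{\zeta}^{3ij}+\overline{\zeta}^{-3ij}$. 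In short: your first-main-theorem observation is correct but circular as a way to \emph{identify} $b$, and the structural matching needs either a citation to a sufficiently strong result or replacement by the explicit CHEVIE-based computation the paper carries out.
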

\begin{proof}
Let $N:=N_G(Q)$ and $C:=C_G(Q)$.  As $b\in\mathrm{Bl}(N_G(Q))$, $b^G$ is defined and $b^G=B$ if and only if $\lambda_B(\mathcal{K}^+)=\lambda_b\left((\mathcal{K}\cap C)^+\right)$ for all conjugacy classes $\mathcal{K}$ of $G$ (see, for example, \cite[Lemma 15.44]{isaacs}).  Let $\chi\in\irr(G|B)$.  The central character $\omega_\chi$ for $G$ are available in CHEVIE (see section 3.26 of the CHEVIE manual), and the values of $\varphi\in\irr(N|b)$ on $C$ can be computed by their descriptions and using the character tables for $Sp_4(q), SL_2(q),$ and $GL_3^\pm(q)$ available in CHEVIE.  Hence it remains only to determine the fusion of classes of $C$ into $G$ in order to compute $\omega_\varphi\left((\mathcal{K}\cap C)^+\right)=\frac{1}{\varphi(1)}\sum_{\mathcal{C}\subseteq\mathcal{K}} \varphi(g)|\mathcal{C}|$, where $g\in \mathcal{C}$ and the sum is taken over classes $\mathcal{C}$ of $C$ which lie in $\mathcal{K}$, and compare the image of this under $\ast$ with $\omega_\chi(\mathcal{K}^+)^\ast$.  (We note that $\omega_\chi(1^+)=1=\omega_\varphi((1\cap C)^+)$ for all $\chi\in\irr(G), \varphi\in\irr(N)$, so it suffices to consider nontrivial classes $\mathcal{K}$.)

We present here the complete discussion for $\ast_R$ when $G=Sp_6(q)$, $\ell=3|(q-\epsilon)$.  The other situations are similar, though quite tedious.

When $Q=R$, we have $C=C_{q-\epsilon}= Z(GL_3^\epsilon(q))$, embedded in $G$ in the usual way.  The set $\ibr_3(G|R)$ consists of two Brauer characters in a unipotent block and one Brauer character in each set $\mathfrak{B}_8(i)$ if $\epsilon=1$ or $\mathfrak{B}_9(i)$ if $\epsilon=-1$ with $i\in I_{q-\epsilon}$ divisible by $3^d$.  Choosing $\chi=1_G$ for $B=B_0$, $\chi=\chi_{27}(i)$ for $B=B_8(i)$, and $\chi=\chi_{30}(i)$ for $B=B_9(i)$, we have $\omega_{1_G}(\mathcal{K}^+)^\ast=0=\omega_{\chi_{27}(i)}(\mathcal{K}^+)^\ast$ when $\epsilon=1$ for every nontrivial conjugacy class $\mathcal{K}\neq C_{25}(j)$ of $G$ (in the notation of CHEVIE) for any $j\in I_{q-1}$ and $\omega_{1_G}(\mathcal{K}^+)^\ast=0=\omega_{\chi_{30}(i)}(\mathcal{K}^+)^\ast$ when $\epsilon=-1$ for every nontrivial conjugacy class $\mathcal{K}\neq C_{28}(j)$ for any $j\in I_{q+1}$.  (These can be seen from the central character table for $G$ in CHEVIE, together with the fact that $(q-\epsilon)^\ast=3^\ast=0$.)

Now, let $\zeta$ generate $C\cong C_{q-\epsilon}$, so $\zeta^i$ is identified in $G$ with the semisimple element with eigenvalues $\tilde{\zeta}$ and $\tilde{\zeta}^{-1}$, each of multiplicity $3$, where $\tilde{\zeta}$ is a fixed primitive $(q-\epsilon)$ root of unity in $\overline{\F_q}^\times$.  Then $\{\zeta^i, \zeta^{-i}\}=C_{25}(i)\cap C$ if $\epsilon=1$ and $=C_{28}(i)\cap C$ if $\epsilon=-1$.

Let $\overline{\zeta}=\exp\left(\frac{2\pi\sqrt{-1}}{q-\epsilon}\right)$ in $\C^\times$ and let $\chi:=\chi_{27}(i)$ or $\chi_{30}(i)$ and $\mathcal{K}=C_{25}(j)$ or $C_{28}(j)$, in the cases $\epsilon=1, -1$, respectively.   Then $\omega_{\chi}(\mathcal{K}^+)^\ast=(\overline{\zeta}^{3ij}+\overline{\zeta}^{-3ij})^\ast$ from CHEVIE, since $(q-\epsilon)^\ast=3^\ast=0$.  But the value of $\varphi:=\widetilde{\varphi}_{3i}\varsigma$ on $\zeta^j$ is $\overline{\zeta}^{3ij}$, so $\omega_\varphi\left((\mathcal{K}\cap C)^+\right)=\frac{(\overline{\zeta}^{3ij}+\overline{\zeta}^{-3ij})2}{2}=\overline{\zeta}^{3ij}+\overline{\zeta}^{-3ij}$.  Hence we have $b^G=B$ in this case.

Moreover, $\omega_{1_G}(\mathcal{K}^+)^\ast=2$, so $b^G=B$ in this case as well, since if $\varphi=1^{(\nu)}\varsigma$ for $\nu\in\{\pm1\}$, then $\omega_{\varphi}\left((\mathcal{K}\cap C)^+\right)=2$, completing the proof for $Q=R$.
\end{proof}

%///FOR PAPER - at some point have mentioned that we've found the block distribution using central characters, and mention will be in thesis

\begin{proposition}\label{prop:ht0}
Let $G=Sp_6(2^a)$ or $Sp_4(2^a)$ and $Q$ a nontrivial $\ell$-radical subgroup with $\ell$ an odd prime dividing $|G|$.  The sets $\underline{\irr_0}(G|Q)$ and $\underline{\irr_0}(N_G(Q)|Q)$ defined in \prettyref{sec:maps} are exactly the sets $\irr_0(G|Q)$ and $\irr_0(N_G(Q)|Q)$ of height-zero characters of $G$ and $N_G(Q)$, respectively, with defect group $Q$.
\end{proposition}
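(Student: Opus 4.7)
The plan is to proceed by a case-by-case analysis over each radical subgroup $Q$ classified in \prettyref{prop:radsubs}. For each such $Q$, I would separately verify the two equalities $\irr_0(G|Q)=\underline{\irr_0}(G|Q)$ and $\irr_0(N_G(Q)|Q)=\underline{\irr_0}(N_G(Q)|Q)$, by identifying the blocks of $G$ that have $Q$ as a defect group, listing the height-zero characters in each, and comparing with the sets in \prettyref{sec:maps}. For $\ell\mid(q^2-1)$ with $\ell\neq 3$, the defect groups of the non-unipotent blocks are determined from \prettyref{lem:moritaequiv} and \prettyref{lem:centralizersemisimpleSp}: via the Bonnaf\'e--Rouquier Morita equivalence, a block in $\mathcal{E}_\ell(G,(s))$ has defect group $D$ where $D$ is a defect group of the corresponding unipotent block of $C_{G^\ast}(s)$, so one reads off $D$ from the Sylow structure of the appropriate factor of $C_{G^\ast}(s)\cong Sp_{m_0}(q)\oplus\bigoplus_i GU_{m_i}(q^{d_i/2})\oplus\bigoplus_j GL_{n_j}(q^{k_j})$. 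For the unipotent blocks, one invokes \cite{white2000}, \cite{white95} directly.

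Next, on the $G$ side, for each family $\mathcal{E}_i(J)$ appearing in the image of $\Omega_Q$ or $\ast_Q$, I would verify the two required conditions: (i) each character listed has $\ell'$-part of its degree equal to $|G:D|_\ell$, where $D$ is the purported defect group, so has height zero; and (ii) no other character in its block has height zero. Condition (i) is a direct degree computation from \cite{Luebeckthesis} together with the degree formula $\chi(1)=[G^\ast:C_{G^\ast}(s)]_{p'}\psi(1)$ of Digne--Michel. Condition (ii) follows from the description of the block (either through the Bonnaf\'e--Rouquier equivalence and White's decomposition numbers, or directly in the unipotent case) combined with the central-character machinery in CHEVIE. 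For Brauer characters, I would use that $\mathcal{E}(G,(g_i(J)))$ is a basic set for $\mathcal{E}_\ell(G,(g_i(J)))$, together with the known decomposition matrices, to count $\ibr_\ell$'s in each block and match them against our list.

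On the $N_G(Q)$ side, the height-zero characters were already enumerated in \prettyref{sec:dzN(Q)} via Clifford theory with respect to the normal chain $C_G(Q)\triangleleft M\triangleleft N_G(Q)$, where $M$ is the relevant normal subgroup from the structure $N_G(Q)=Sp_{2(n-s)}(q)\times \prod(GL_{r_i}^\epsilon(q)\!:\!2)\wr S_{c_i}$. For each candidate constituent on $C_G(Q)$ I already counted the $\chi\in\irr(N_G(Q))$ with $\chi(1)_\ell=|N_G(Q)/Q|_\ell$, and these are precisely the height-zero characters in blocks with $Q$ as defect group because $Q\triangleleft N_G(Q)$ (so $Q$ is contained in every defect group of every block of $N_G(Q)$, and characters of height zero with $\ell$-part of degree equal to $|N_G(Q)/Q|_\ell$ must sit in a block whose defect group is exactly $Q$). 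The block distribution of these characters on the $N_G(Q)$ side then matches the block distribution on the $G$ side by \prettyref{prop:inducedblockprop}.

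The main obstacle, which I would treat separately, is the case $Q=R$ when $\ell=3\mid(q^2-1)$. Here one must show that $R$ is \emph{not} a defect group of any block, so that $\irr_0(G|R)=\emptyset=\underline{\irr_0}(G|R)$. The strategy is to inspect each $3$-block and show its defect group is either cyclic (namely $Q^{(3)}$, when $\ell=3\mid(q^2+\epsilon q+1)$), or abelian (one of the $Q_{r_1,r_2,r_3}$), or the full Sylow $P$; no block can have the nonabelian group $R$ as defect group because every block of positive defect that our classification produces has a defect group arising from the Sylow structure of $C_{G^\ast}(s)$ via Bonnaf\'e--Rouquier, and these centralizers are products of general linear and unitary factors whose $3$-Sylow subgroups are either cyclic, elementary abelian, or contain $P$ up to conjugacy. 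A parallel but easier version of this argument handles $Sp_4(q)$, where \cite{white95} gives the block distribution directly.
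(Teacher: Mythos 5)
Your proposal takes a genuinely different route from the paper, and it contains a gap on the $N_G(Q)$ side that needs to be addressed.

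The paper's proof does \emph{not} directly compute defect groups from the Bonnaf\'e--Rouquier correspondence. Instead it bootstraps from \prettyref{prop:inducedblockprop}: if $\chi\in\underline{\irr_0}(G|Q)$ with $\Omega_Q(\chi)=\varphi$ and $b=\mathrm{Bl}(N|\varphi)$, then $b^G=B=\mathrm{Bl}(G|\chi)$, so $Q\leq D_b\leq D_B$ by \cite[Corollary 15.39]{isaacs} (using that $Q$ is $\ell$-radical, hence $Q=\textbf{O}_\ell(N)$). A single degree inspection on the $G$ side gives $|D_B|=|Q|$, forcing $Q=D_b=D_B$ for \emph{both} blocks simultaneously. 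This simultaneously settles the defect groups in $G$ and in $N_G(Q)$ without ever computing $D_b$ directly in $N_G(Q)$. Your approach, by contrast, would have to independently determine defect groups of blocks of $N_G(Q)$, and moreover relies on the identification of defect groups under the Bonnaf\'e--Rouquier Morita equivalence --- a nontrivial input (it is a consequence of \cite[Theorem 7.14]{kessarmalle}, which the paper cites only as a \emph{consistency check}, not as an ingredient). The paper's route avoids this dependence.

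The gap: on the $N_G(Q)$ side you assert that the characters $\chi$ with $\chi(1)_\ell=|N_G(Q)/Q|_\ell$ are \emph{precisely} $\irr_0(N_G(Q)|Q)$, with the justification that $Q$ is normal (hence contained in every defect group) and that height-zero characters of defect $\log_\ell|Q|$ must have defect group $Q$. That justification proves only one inclusion, namely $\irr_0(N_G(Q)|Q)\subseteq\{\chi:\chi(1)_\ell=|N_G(Q)/Q|_\ell\}$. It does \emph{not} show the reverse: a character satisfying the degree condition a priori could sit at positive height inside a block of $N_G(Q)$ with a strictly larger defect group, since $Q$ is generally not a Sylow $\ell$-subgroup of $N_G(Q)$ (e.g.\ $N_G(Q_1)=(GL_1^\epsilon(q):2)\times Sp_4(q)$ has $\ell$-part $\ell^{3d}$ while $|Q_1|=\ell^d$). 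To close the gap you would have to determine the defect group of the block of $\chi$ in $N_G(Q)$ for each case, by a product-block computation using the structure $N_G(Q)=Sp_{2(n-s)}(q)\times\prod(GL_{r_i}^\epsilon(q):2)\wr S_{c_i}$. That verification is exactly what \prettyref{prop:inducedblockprop} plus the argument in part (1) of the paper's proof lets you skip.
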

\begin{proof}
(1) Let $N:=N_G(Q)$, $\varphi\in\underline{\irr_0}(N|Q)$, and $\chi\in\underline{\irr_0}(G|Q)$ such that $\Omega_Q(\chi)=\varphi$.  Let $b=\mathrm{Bl}(N|\varphi)$, so that $b^G$ is the block $B$ containing $\chi$, by \prettyref{prop:inducedblockprop}.  Let $D_b$ and $D_B$ denote defect groups for $b$ and $B$, respectively, so we may assume $D_b\leq D_B$.  Then as $Q$ is $\ell$-radical, we know that $Q\leq D_b\leq D_B$  (see, for example, \cite[Corollary 15.39]{isaacs}). Now, since $|G|_{\ell}/|D_B|$ must be the highest power of $\ell$ dividing the degree of every member of $\irr(B)$, inspection of the character degrees in $B$ yields that $|D_B|=|Q|$, so in fact $Q=D_b=D_B$.  Hence by inspection of the degrees of characters in our constructed sets, we see that $\underline{\irr_0}(G|Q)\subseteq \irr_0(G|Q)$ and $\underline{\irr_0}(N|Q)\subseteq \irr_0(N|Q)$.

(2) Moreover, we have constructed the set $\underline{\irr_0}(G|Q)$ to contain all characters $\chi'\in\irr(B)$ whose degrees satisfy $\chi'(1)_{\ell}=[G:Q]_{\ell}$.  (That is to say, given any block in $\mathrm{Bl}(G)$, if we included in $\underline{\irr_0}(G|Q)$ one irreducible ordinary character of the block whose degree satisfies this condition, then we included \emph{all} such members of the block.)  Further, every block $B'\in \mathrm{Bl}(G)$ of positive defect intersects the set $\underline{\irr_0}(G|Q')$ for some $\ell$-radical subgroup $Q'$, so we see that in fact $\underline{\irr_0}(G|Q)=\irr_0(G|Q)$.  Note that when $\ell=3$, this means $R$ does not occur as a defect group for any block of $G=Sp_6(2^a)$.

(3) Now, except in the case $G=Sp_6(q)$ with $\ell=3$ and $Q=Q_{1,1,1}$ or $P$, from the discussion in \prettyref{sec:dzN(Q)} we see that in fact every character $\theta$ of $N$ with $\theta(1)_{\ell}=|N|_{\ell}/|Q|$ has been included in $\underline{\irr_0}(N|Q)$, so $\underline{\irr_0}(N|Q)=\irr_0(N|Q)$.  Hence we are left with the case $\ell=3$ and $Q=Q_{1,1,1}$ or $P$.  However, by the discussion after the description of the map $\Omega_{Q_{1,1,1}}$ in the case $\ell=3$, we see that $\underline{\irr_0}(N|Q_{1,1,1})=\irr_0(N|Q_{1,1,1})$ in this case as well.  Finally, for $Q=P$, we have already described $\irr_0(N|P)$ in \prettyref{sec:dzN(Q)}.
\end{proof}

We note that \prettyref{prop:ht0} is consistent with Brauer's height-zero conjecture, which says that an $\ell$-block $B$ of a finite group has an abelian defect group if and only if every irreducible ordinary character in $B$ has height zero.  (Note that it has been shown recently in \cite{kessarmalle} that the ``if" part of this conjecture holds.)  It is also consistent with a consequence of  \cite[Theorem 7.14]{kessarmalle}, which implies that the defect group for a block which is not quasi-isolated (i.e. satisfies the conditions for Bonnaf{\'e}-Rouquier's theorem) is isomorphic to the defect group of its Bonnaf{\'e}-Rouquier correspondent.

\begin{theorem}\label{thm:goodBAWC}
Let $G=Sp_6(2^a)$ with $a\geq 1$ or $Sp_4(2^a)$ with $a\geq 2$.  Then $G$ is ``good" for the Alperin weight and blockwise Alperin weight conjectures for all primes $\ell\neq 2$.
\end{theorem}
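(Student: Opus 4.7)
The plan is to invoke the reduction theorems of Navarro--Tiep \cite{TiepNavarroAWCreduction} and Sp\"ath \cite{SpathBAWCreduction} after verifying their hypotheses using the maps $\ast_Q$ constructed in \prettyref{sec:maps}. The argument splits by the prime $\ell$. When $\ell\nmid (q^2-1)$, \prettyref{prop:radsubs} shows that every nontrivial $\ell$-radical subgroup of $G$ is a cyclic Sylow $\ell$-subgroup, so \cite[Proposition 6.2]{SpathBAWCreduction} applies directly and $G$ is BAWC-good (and hence AWC-good), with no further work required.

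For the remaining case $\ell\mid(q^2-1)$ (necessarily $\ell$ odd), the plan is to check, for each radical subgroup $Q$ listed in \prettyref{prop:radsubs}, that (i) the map $\ast_Q\colon \ibr_\ell(G|Q)\to \dz(N_G(Q)/Q)$ is a bijection, (ii) the family $\{\ibr_\ell(G|Q)\}$ partitions $\ibr_\ell(G)$ as $Q$ ranges over a $G$-transversal of $\ell$-radical subgroups, (iii) each $\ast_Q$ is $\aut(G)_Q$-equivariant, and (iv) $\mathrm{Bl}(N_G(Q)\mid \psi^{\ast_Q})^G = \mathrm{Bl}(G\mid \psi)$ for each $\psi\in\ibr_\ell(G|Q)$. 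Conditions (iii) and (iv) are exactly \prettyref{prop:equivariantprop}, \prettyref{prop:equivariantprop2}, and \prettyref{prop:inducedblockprop}, so nothing further is needed there. For (ii), the partition property is built into the construction of \prettyref{sec:maps}: each set $\ibr_\ell(G|Q)$ is defined as the Brauer characters in a prescribed collection of blocks $B_i(J)$, and the Brou\'e--Michel theorem together with \prettyref{lem:moritaequiv} guarantees that these collections exhaust $\ibr_\ell(G)$ without overlap. Bijectivity (i) then reduces to a character count, where the size of $\dz(N_G(Q)/Q)$ is the one computed case-by-case in \prettyref{sec:dzN(Q)}, while the size of $\ibr_\ell(G|Q)$ is read off the decomposition matrices of \cite{white2000, white95, james1990, okuyamawakiSp4, geckdecompSU, okuyamawakiSU3} after transferring via the Bonnaf\'e--Rouquier Morita equivalence of \prettyref{lem:moritaequiv}. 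The remaining extension/cohomological hypotheses of Sp\"ath's reduction simplify because $\out(G)$ is cyclic and generated by (a modification of) the standard field automorphism $\sigma_2$, and because the Schur multipliers of $Sp_6(2^a)$ and of $Sp_4(2^a)$ with $a\geq 2$ are non-exceptional, so no genuinely projective representations enter.

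The main obstacle I expect to be the prime $\ell=3$, where the non-abelian radical subgroups $P$ and $R$ coexist with the elementary abelian $Q_{1,1,1}$, and the Brauer characters in $\mathfrak{B}_0$, $\mathfrak{B}_8(i)$, and $\mathfrak{B}_9(i)$ must be redistributed among $\ibr_3(G|Q_{1,1,1})$, $\ibr_3(G|P)$, and $\ibr_3(G|R)$ so that all three of the maps $\ast_{Q_{1,1,1}}$, $\ast_P$, $\ast_R$ are simultaneously bijective, equivariant, and block-preserving. \prettyref{sec:maps} handles this by fixing admissible partitions $\mathfrak{B}_0 = \mathfrak{B}_0(Q_{1,1,1})\cup\mathfrak{B}_0(P)\cup\mathfrak{B}_0(R)$ and $\mathfrak{B}_8(i)=\mathfrak{B}_8(i,P)\cup\mathfrak{B}_8(i,R)$ (and likewise for $\mathfrak{B}_9(i)$), and the delicate step in the proof is to confirm that such partitions exist with the correct cardinalities and survive the $\sigma$-action from \prettyref{prop:equivariantprop}; the analogous (but easier) analysis for $H=Sp_4(q)$ then follows from \prettyref{prop:equivariantprop2} and the corresponding maps of \prettyref{sec:mapsSp4}.
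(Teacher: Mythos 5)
Your overall strategy matches the paper's: split by whether $\ell\mid(q^2-1)$, dispose of the cyclic-Sylow case via \cite[Proposition 6.2]{SpathBAWCreduction}, and otherwise verify the Navarro--Tiep/Sp\"ath conditions using the maps $\ast_Q$, Propositions \ref{prop:equivariantprop}, \ref{prop:equivariantprop2}, \ref{prop:inducedblockprop}, and the cyclicity of $\out(G)$ to trivialize the normally-embedded/cohomological hypotheses. Your remarks on the $\ell=3$ partition of $\mathfrak{B}_0$, $\mathfrak{B}_8(i)$, $\mathfrak{B}_9(i)$ among $Q_{1,1,1}$, $P$, $R$ are also consistent with what the paper does.

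However, there is a genuine gap: the theorem allows $G=Sp_6(2^a)$ with $a\geq 1$, and for $a=1$ and $\ell=3$ (the only odd prime dividing $q^2-1=3$ when $q=2$) your argument breaks down. The Schur multiplier of $Sp_6(2)$ is $C_2$, which \emph{is} exceptional, so the universal $\ell'$-covering group for $\ell=3$ is $2.Sp_6(2)$, not $G$ itself. The reduction conditions in \cite[Section 3]{TiepNavarroAWCreduction} and \cite{SpathBAWCreduction} must then be verified on $2.Sp_6(2)$, but the maps $\ast_Q$ of \prettyref{sec:maps} are built on $G=Sp_6(q)$; they do not descend automatically to the double cover, and your blanket assertion that ``the Schur multipliers of $Sp_6(2^a)$ \ldots are non-exceptional, so no genuinely projective representations enter'' is false at $a=1$. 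The paper handles this case separately by explicit GAP computations with the Brauer character table of $2.Sp_6(2)$ and a faithful permutation representation of it, before restricting to $a\geq 2$ for the remainder of the argument. Your proposal needs an analogous separate treatment (computational or otherwise) of $Sp_6(2)$ at $\ell=3$; as written, the reduction step you rely on simply does not apply there.
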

\begin{proof}
1) Let $\ell\neq2$ be a prime dividing $|G|$.  Since $\out(G)$ is cyclic, we know $G$ is BAWC-good for any prime $\ell$ such that a Sylow $\ell$-subgroup of $G$ is cyclic, by \cite[Proposition 6.2]{SpathBAWCreduction}.  Hence, $G$ is BAWC-good for $\ell$ as long as $\ell\not|\,(q^2-1)$.  Moreover, considerations in GAP show that the statement is true for $\ell=3$ when $G=Sp_6(2)$. (The main tools here were the PrimeBlocks command, the Brauer character table for the double cover $2.Sp_6(2)$ in the Character Table Library \cite{GAPctlib}, as well as the faithful permutation representation of $2.Sp_6(2)$ on 240 points given in the online ATLAS \cite{onlineATLAS}.) Henceforth, we shall assume $\ell|(q^2-1)$ and $a\geq 2$.

2) As $a\geq 2$, the Schur multiplier of $G$ is trivial, so $G$ is its own Schur cover, so in the notation of \cite[Section 3]{TiepNavarroAWCreduction}, we may assume $S$ is just $G$ itself.  Furthermore, \cite[Lemma 6.1]{SpathBAWCreduction} implies that it suffices to show that $G$ is AWC-good for $\ell$ in the sense of \cite{TiepNavarroAWCreduction} and that the maps used satisfy condition 4.1(ii)(3) of \cite{SpathBAWCreduction}.  For the trivial group $Q=\{1\}$, the map sending defect-zero characters to themselves satisfies the conditions trivially.  Hence, it suffices to show that our sets $\ibr_\ell(G|Q)$ and maps $\ast_Q$ satisfy the conditions of \cite[Section 3]{TiepNavarroAWCreduction} and that for $\psi\in\ibr_\ell(G|Q)$, $\psi$ is a member of the induced block $b^G$, where $b=\mathrm{Bl}(N_G(Q)|\psi^{\ast_Q})$.  By \prettyref{prop:inducedblockprop}, the latter condition is satisfied.

3)  Since $Z(G)=1$ and our sets $\ibr_\ell(G|Q)$ depend only on the conjugacy class of $Q$, we know that our sets satisfy \cite[Condition 3.1.a]{TiepNavarroAWCreduction}.  Our sets $\ibr_\ell(G|Q)$ are certainly disjoint, since distinct Lusztig series or blocks are disjoint, and the union of all of these with the set $\{\wh{\chi}|\chi\in\dz(G)\}$ is all of $\ibr_\ell(G)$, by \prettyref{lem:moritaequiv} and the results of \cite{white2000} and \cite{white95}, so our sets also satisfy \cite[Condition 3.1.b]{TiepNavarroAWCreduction}.  Moreover, by Propositions \ref{prop:equivariantprop} and \ref{prop:equivariantprop2}, our maps and sets also satisfy the final partition condition and bijection condition, \cite[Conditions 3.1.c, 3.2.a]{TiepNavarroAWCreduction}.

4)  Let $Q$ be an $\ell$-radical subgroup, and fix $\theta\in\ibr_\ell(G|Q)$.  Identify $G$ with $\mathrm{Inn}(G)$, so that we can write $G\lhd \aut(G)$.  Write $X:=\aut(G)_\theta$ and let $B:=X_Q$ be the subgroup of $\aut(G)$ stabilizing both $Q$ and $\theta$.  Then certainly, $G\lhd X$, $Z(G)\leq Z(X)$, $\theta$ is $X$-invariant, and $B$ is exactly the set of automorphisms of $G$ induced by the conjugation action of $N_X(Q)$ on $G$. Moreover, $C_X(G)$ is trivial and since $X/G$ is cyclic, so is the Schur multiplier $H^2(X/G, \overline{\F}_\ell^\times)$.  Hence the normally embedded conditions \cite[Conditions 3.3.a-d]{TiepNavarroAWCreduction} are trivially satisfied, completing the proof.
\end{proof}

\begin{lemma}\label{lem:outcyclic}
Let $\ell$ be a prime, $S$ be a simple group with universal $\ell'$ covering group $G$, and $Q$ be an $\ell$-radical subgroup satisfying Conditions (i) and (ii) of \cite[Definition 7.2]{spathAMreduction} with $M_Q=N_G(Q)$.  Let $\chi\in\irr_0(G|Q)$ be such that $\aut(S)_{\chi}/S$ is cyclic and let $\eta\in\aut(G)_{\chi}$ with $\aut(S)_{\chi}=\langle S,\eta\rangle$.  Then there are $\tilde{\chi}\in\irr(A(\chi))$ and $\widetilde{\Omega_Q(\chi)}\in\irr(N_{A(\chi)}(Q))$, where $A(\chi):=G\rtimes\langle\eta\rangle$, such that:
\begin{enumerate}
\item $\tilde{\chi}|_G=\chi$
\item $\widetilde{\Omega_Q(\chi)}|_{N_G(Q)}=\Omega_Q(\chi)$
\item $\tilde{b}^{A_{\ell'}}=\tilde{B}$, where $\tilde{b}$ is the block of $N_{A_{\ell'}}(Q)$ containing $\widetilde{\Omega_Q(\chi)}|_{N_{A_{\ell'}}(Q)}$, $\tilde{B}$ is the block of $A_{\ell'}$ containing $\widetilde{\chi}|_{A_{\ell'}}$, and $G\leq A_{\ell'}\leq A(\chi)$ so that $A_{\ell'}/G$ is the Hall $\ell'$-subgroup of $A(\chi)/G$.
\end{enumerate}
\end{lemma}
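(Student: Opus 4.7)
The plan is to first produce extensions of $\chi$ and $\Omega_Q(\chi)$ by exploiting cyclicity of $A(\chi)/G$, then show that the block-induction condition (3) follows from Proposition~\ref{prop:inducedblockprop} together with uniqueness of covering blocks over an $\ell'$-cyclic extension.

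\emph{Extensions.} By hypothesis $\chi$ is $\eta$-invariant, and $A(\chi)/G = \langle \eta G \rangle$ is cyclic, so a standard extension theorem (e.g.\ \cite[Corollary 11.22]{isaacs}) yields some $\tilde\chi \in \irr(A(\chi))$ with $\tilde\chi|_G = \chi$, giving (1). The radical subgroups $Q$ considered in this paper were shown in the discussion preceding Proposition~\ref{prop:equivariantprop} to admit a generator of $\out(G)$ (respectively $\out(H)$) fixing $Q$ setwise; we may thus arrange $Q^\eta = Q$, so $N_{A(\chi)}(Q) = N_G(Q).\langle \eta \rangle$ with cyclic quotient. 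By Propositions~\ref{prop:equivariantprop} and \ref{prop:equivariantprop2}, $\Omega_Q(\chi)^\eta = \Omega_Q(\chi^\eta) = \Omega_Q(\chi)$, and the same extension result produces $\widetilde{\Omega_Q(\chi)} \in \irr(N_{A(\chi)}(Q))$ satisfying (2). Restricting both characters to $A_{\ell'}$ (resp.\ $N_{A_{\ell'}}(Q)$) preserves irreducibility, since $A_{\ell'}/G$ is itself cyclic and the restrictions still lie above $\chi$ and $\Omega_Q(\chi)$.

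\emph{Block covering and matching.} Set $B := \mathrm{Bl}(G|\chi)$ and $b := \mathrm{Bl}(N_G(Q)|\Omega_Q(\chi))$; by Proposition~\ref{prop:inducedblockprop}, $b^G = B$. Write $\tilde B := \mathrm{Bl}(A_{\ell'}|\tilde\chi|_{A_{\ell'}})$ and $\tilde b := \mathrm{Bl}(N_{A_{\ell'}}(Q)|\widetilde{\Omega_Q(\chi)}|_{N_{A_{\ell'}}(Q)})$. Since $A_{\ell'}/G$ is cyclic of order coprime to $\ell$, standard block-covering theory implies that $B$ is covered by a unique block of $A_{\ell'}$, and similarly $b$ is covered by a unique block of $N_{A_{\ell'}}(Q)$, so $\tilde B$ (resp.\ $\tilde b$) is the unique such covering block. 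Because $Q \trianglelefteq N_{A_{\ell'}}(Q)$ and $C_{A_{\ell'}}(Q) \leq N_{A_{\ell'}}(Q)$, the induced block $\tilde b^{A_{\ell'}}$ is defined, and using the compatibility of block induction with block covering (via a central-character computation tracking $\lambda_{\tilde b^{A_{\ell'}}}(K^+) = \lambda_{\tilde b}((K \cap C_{A_{\ell'}}(Q))^+)$ for conjugacy classes $K$ of $A_{\ell'}$), one deduces that $\tilde b^{A_{\ell'}}$ covers $b^G = B$. Both $\tilde b^{A_{\ell'}}$ and $\tilde B$ then cover $B$, and uniqueness of the covering block forces $\tilde b^{A_{\ell'}} = \tilde B$, proving (3).

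\emph{Main obstacle.} The delicate step is justifying the uniqueness of covering blocks and the covering-compatibility of block induction in our setting. These rest on the $\ell'$-cyclic nature of $A_{\ell'}/G$: any twist by a linear character of $A_{\ell'}/G$ has $\ell'$-order, so cannot affect $\ell$-modular central characters. Concretely, one must verify that $\tilde\chi$ and $\widetilde{\Omega_Q(\chi)}$ as chosen above are compatible without any further Gallagher adjustment, which is the heart of the argument. Once this covering uniqueness is in place, (3) is forced automatically from the $G$-level statement of Proposition~\ref{prop:inducedblockprop}, and no case analysis on the specific structure of $Q$ is required.
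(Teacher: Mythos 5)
Your proposal correctly reduces to showing $\tilde b^{A_{\ell'}}$ covers $B$, and the central-character computation you sketch for that step is essentially the one the paper uses (via \cite[Theorems 9.5]{NavarroBlocks} and the decomposition of a conjugacy class of $A_{\ell'}$ into classes of $G$). However, the final step of your argument rests on a false claim. You assert that because $A_{\ell'}/G$ is a cyclic $\ell'$-group, the block $B$ of $G$ is covered by a \emph{unique} block of $A_{\ell'}$. This is not true: for an $A_{\ell'}$-stable block $B$ of a normal subgroup $G$ with $A_{\ell'}/G$ cyclic of $\ell'$-order, the number of covering blocks is $|A_{\ell'}/G|$, not $1$ (think of $G=1$, $A_{\ell'}=C_r$ with $(r,\ell)=1$, where $\overline{\F}_\ell[C_r]$ is semisimple commutative with $r$ blocks). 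The $\ell'$-cyclic hypothesis removes the Schur-multiplier obstruction to trivializing the twist, but it does not collapse the covering blocks to a single one. Consequently, taking an \emph{arbitrary} extension $\tilde\chi$ of $\chi$ gives no control over which covering block its restriction to $A_{\ell'}$ lands in, and the two covering blocks $\tilde B$ and $\tilde b^{A_{\ell'}}$ need not coincide.

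The paper avoids this by reversing the logic: it first fixes an arbitrary extension $\widetilde{\varphi}$ of $\Omega_Q(\chi)$, computes $\tilde b^{A_{\ell'}}$, verifies the covering of $B$ by the central-character argument, and \emph{then} invokes \cite[Theorem 9.4]{NavarroBlocks} to \emph{choose} a specific extension $\tilde\chi$ of $\chi$ to $A(\chi)$ whose restriction to $A_{\ell'}$ lies in $\tilde b^{A_{\ell'}}$. In other words, the compatibility between the two extensions must be arranged by choosing the Gallagher twist on the $\chi$ side to match the already-determined covering block, which is exactly the ``Gallagher adjustment'' you identified as the heart of the argument but then tried to argue away. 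To repair your proof, replace the uniqueness-of-covering-block appeal with an existence statement of the form of \cite[Theorem 9.4]{NavarroBlocks} and make that choice of $\tilde\chi$ explicit.
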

\begin{proof}
First, note that $\chi$ extends to $A(\chi)$ since $A(\chi)/G$ is cyclic and $\chi$ is invariant under $A(\chi)$.  Let $\varphi:=\Omega_Q(\chi)$.  Since the map $\Omega_Q$ is $\aut(G)_Q$-equivariant, we have $\varphi=\Omega_Q(\chi^\alpha)=\varphi^\alpha$ for any $\alpha\in N_{A(\chi)}(Q)$, so $\varphi$ is invariant under $N_{A(\chi)}(Q)$ and therefore extends to some $\widetilde{\varphi}\in\irr\left(N_{A(\chi)}(Q)\right)$ since $N_{A(\chi)}(Q)/N_G(Q)$ is cyclic.  Let $\widetilde{b}$ be the block of $N_{A_{\ell'}}(Q)$ containing the restriction $\widetilde{\varphi}|_{N_{A_{\ell'}(Q)}}$ and let $B$ be the block of $G$ containing $\chi$.  Then $\tilde{b}^{A_{\ell'}}$ is defined, by \cite[Lemma 15.44]{isaacs}, and we claim that $\tilde{b}^{A_{\ell'}}$ covers $B$, so that by \cite[Theorem 9.4]{NavarroBlocks}, we can choose an extension $\tilde{\chi}$ of $\chi$ to $A(\chi)$ so that $\tilde{\chi}|_{A_{\ell'}}$ is contained in $\tilde{b}^{A_{\ell'}}$.

To prove the claim, first note that by \cite[Theorem 9.5]{NavarroBlocks}, $\tilde{b}^{A_{\ell'}}$ covers $B$ if and only if the central functions satisfy $\lambda_{\tilde{b}^{A_{\ell'}}}(\mathcal{K}^+)=\lambda_B(\mathcal{K}^+)$ for all classes $\mathcal{K}$ of $A_{\ell'}$ contained in $G$.  Let $b$ be the block of $N_G(Q)$ containing $\varphi$, so that $b^G=B$ by Condition (ii) of \cite[Definition 7.2]{spathAMreduction} and $\lambda_{\tilde{b}}$ covers $\lambda_b$ by \cite[Theorem 9.2]{NavarroBlocks}.  Let $\mathcal{K}_1,\ldots,\mathcal{K}_k$ be the classes of $G$ so that $\mathcal{K}=\bigcup_i \mathcal{K}_i$. Notice that $\mathcal{K}_i\cap N_{A_{\ell'}}(Q)= \mathcal{K}_i\cap N_{G}(Q)$ can be viewed as a union of classes of $N_{A_{\ell'}}(Q)$ contained in $N_G(Q)$ and $\bigcup_i (\mathcal{K}_i\cap N_{A_{\ell'}}(Q))= \mathcal{K}\cap N_{A_{\ell'}}(Q)$, so
\[\lambda_B(\mathcal{K}^+)=\sum_i \lambda_B(\mathcal{K}_i^+)=\sum_i\lambda_{b^G}(\mathcal{K}_i^+)=\sum_i\lambda_b\left((\mathcal{K}_i\cap N_G(Q))^+\right)\]\[=\sum_i\lambda_{\tilde{b}}\left((\mathcal{K}_i\cap N_{A_{\ell'}}(Q))^+\right)=\lambda_{\tilde{b}}\left((\mathcal{K}\cap N_{A_{\ell'}}(Q))^+\right)=\lambda_{\tilde{b}^{A_{\ell'}}}(\mathcal{K}^+),\] which proves the claim.
\end{proof}

\begin{comment}
\begin{theorem}
Assuming the hypotheses of \prettyref{lem:outcyclic}, suppose further that $\eta$ fixes $Q$.  Then Condition (iii) of \cite[Definitions 7.2]{spathAMreduction} is also satisfied by $Q$ with the chosen maps $\Omega_Q\colon\irr_0(G|Q)\leftrightarrow\irr_0(N_G(Q)|Q)$.
\end{theorem}
\begin{proof}
this might not be true.
\end{proof}
\end{comment}

\begin{theorem}\label{thm:goodAM}
Let $G=Sp_6(2^a)$ or $Sp_4(2^a)$ with $a\geq 2$.  Then $G$ is ``good" for the McKay and Alperin-McKay conjectures for all primes $\ell\neq 2$.
\end{theorem}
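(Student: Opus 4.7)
The plan is to verify that $G$ is AM-good in the sense of \cite[Definition 7.2]{spathAMreduction} for every prime $\ell \neq 2$ dividing $|G|$, which by S\"ath's reduction theorem yields the Alperin--McKay conjecture for $G$, and hence also the McKay conjecture (obtained by summing over blocks with a given defect group). Since $a \geq 2$, both $Sp_6(2^a)$ and $Sp_4(2^a)$ have trivial Schur multiplier, so $G$ is its own universal $\ell'$-central extension and in the notation of \cite{spathAMreduction} we may take the covering group to be $G$ itself; this considerably simplifies the verification, as all conditions reduce to statements intrinsic to $G$.

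For the trivial radical subgroup $Q = \{1\}$, the identity map on $\dz(G)$ trivially satisfies all required conditions. For each nontrivial $\ell$-radical $Q$ classified in \prettyref{prop:radsubs}, the map $\Omega_Q \colon \irr_0(G|Q) \to \irr_0(N_G(Q)|Q)$ constructed in \prettyref{sec:maps} is a bijection between the correct sets by \prettyref{prop:ht0}, it is $\aut(G)_Q$-equivariant by Propositions~\ref{prop:equivariantprop} and~\ref{prop:equivariantprop2}, and the induced-block condition $b^G = B$ holds by \prettyref{prop:inducedblockprop}. Together these verify conditions (i) and (ii) of \cite[Definition 7.2]{spathAMreduction}.

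The remaining ingredient is the extension condition (iii), which demands simultaneous extensions $\tilde{\chi}$ of $\chi \in \irr_0(G|Q)$ and $\widetilde{\Omega_Q(\chi)}$ of $\Omega_Q(\chi)$ to their respective stabilizers in $\aut(G)$, with compatible induced-block behavior on the $\ell'$-Hall subgroups. Since $\out(G)$ is cyclic for both $Sp_6(2^a)$ and $Sp_4(2^a)$, the hypothesis $\aut(S)_\chi/S$ cyclic in \prettyref{lem:outcyclic} is automatic. Moreover, by the construction preceding \prettyref{prop:equivariantprop}, we may choose a generator $\eta$ of $\aut(S)_\chi$ modulo $S$ that simultaneously stabilizes $Q$ (since $Q$ is $\aut(G)$-conjugate to a $\sigma_2$-stable group and $\Omega_Q$ is $\aut(G)_Q$-equivariant). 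With this choice, \prettyref{lem:outcyclic} produces the required extensions with matching induced blocks, verifying condition (iii).

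I expect the main obstacle to have already been overcome in the preparatory results, chiefly \prettyref{prop:inducedblockprop} on the induced-block correspondence, whose proof rests on detailed computations with central characters, character values on $C_G(Q)$, and conjugacy-class fusion. Once that verification is in hand, the present theorem amounts to assembling the equivariance, block, and extension ingredients into the framework of \cite[Definition 7.2]{spathAMreduction} and invoking the reduction theorem; and the remaining assertions of \prettyref{thm:Sp6good} about the excluded small cases $Sp_6(2)$ and $Sp_4(2)'$ are handled separately, as indicated in the introduction.
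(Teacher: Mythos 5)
Your assembly of the preparatory results is correct as far as it goes: conditions (i) and (ii) of \cite[Definition~7.2]{spathAMreduction} do follow from Propositions \ref{prop:radsubs}, \ref{prop:ht0}, \ref{prop:equivariantprop}, \ref{prop:equivariantprop2}, and \ref{prop:inducedblockprop}, and \prettyref{lem:outcyclic} does supply the extensions $\tilde{\chi}$, $\widetilde{\Omega_Q(\chi)}$ with compatible block induction on the $\ell'$-Hall overgroups. However, you stop short of the actual work. Condition (iii) of \cite[Definition~7.2]{spathAMreduction} is \emph{not} exhausted by the three properties that \prettyref{lem:outcyclic} delivers (extension of $\chi$, extension of $\Omega_Q(\chi)$, and $\tilde b^{A_{\ell'}}=\tilde B$). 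Those are precisely the first three subconditions; there is a further subcondition, \cite[(7.4)]{spathAMreduction}, which requires an equality of the form
\[
\left(\frac{|G|\,\tilde{\chi}(x)}{|C_G(x)|\,\chi(1)}\right)^{\!\ast}
=
\left(\frac{|N_G(Q)|\,\widetilde{\chi'}(x)}{|C_{N_G(Q)}(x)|\,\chi'(1)}\right)^{\!\ast}
\]
for all $\ell$-regular $x\in M_Q A_{Q,\chi}$ with $Q\in\Syl_\ell(C_G(x))$, and your proposal neither states it nor verifies it.

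This is a real gap, not a routine omission. When $x\in M_Q$ the equality follows directly from \cite[Proposition~7.4]{spathAMreduction}, but when $x\in N_{A_\chi}(Q)\setminus N_G(Q)$ one must descend from the $A_{\ell'}$-level block equality of \prettyref{lem:outcyclic}, using $\lambda_{\tilde B}(\mathcal{K}^+)=\lambda_{\tilde b}\bigl((\mathcal{K}\cap C_{A_{\ell'}}(Q))^+\bigr)$ and the index identities $[A_{\ell'}:G]=[N_{A_{\ell'}}(Q):N_G(Q)]$ and $[C_{A_{\ell'}}(x):C_G(x)]=[C_{N_{A_{\ell'}}(Q)}(x):C_{N_G(Q)}(x)]$. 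Those index identities in turn depend on $\eta$ stabilizing $Q$. Your parenthetical justification (``since $Q$ is $\aut(G)$-conjugate to a $\sigma_2$-stable group and $\Omega_Q$ is $\aut(G)_Q$-equivariant'') does not establish this: for $H=Sp_4(q)$ the graph automorphism $\gamma$ generating $\out(H)$ \emph{swaps} $Q_1$ and $Q_2$, so no $Q_1$-stabilizing generator of $\aut(H)/H$ exists. The argument is rescued only by observing that in this case $\gamma\notin A_\chi$, so that $A_\chi/G$ is generated by $\gamma^2=\sigma$, which does fix $Q_1$ and $Q_2$. Without this case analysis and without verifying \cite[(7.4)]{spathAMreduction}, the proof of condition (iii) is incomplete.
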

\begin{proof}
1) Again, notice that $G$ is its own Schur cover, so $G=S$ in the notation of either \cite[Definition 7.2]{spathAMreduction} or \cite[Section 10]{IsaacsMalleNavarroMcKayreduction}.  Also, note that reasoning similar to part (4) of the proof of \prettyref{thm:goodBAWC} implies that $G$ satisfies conditions (5)-(8) of the definition of McKay-good in \cite[Section 10]{IsaacsMalleNavarroMcKayreduction}.  Hence, if $G$ is ``good" for the Alperin-McKay conjecture (i.e. satisfies the inductive-AM-condition described in \cite[Definition 7.2]{spathAMreduction}), then $G$ satisfies conditions (1)-(4) of the definition of McKay-good, so is also ``good" for the McKay conjecture.  %(Indeed, in the case $Q$ is a Sylow $\ell$-subgroup of $G$, the set $\irr_{\ell'}(G)$ is exactly the set of height-zero characters of $G$ with defect group $Q$.)
Again, when $Q=1$, the map sending defect-zero characters to themselves satisfies the conditions trivially.

2) Let $Q\neq 1$ be an $\ell$-radical subgroup of $G$ which occurs as a defect group for some $\ell$-block of $G$.  Hence by replacing with a conjugate subgroup, we may assume that $Q$ is one of the groups described in \prettyref{sec:radsubs} aside from $R$.  The group $M_Q:=N_G(Q)$ satisfies condition (i) of \cite[Definition 7.2]{spathAMreduction}).  Moreover, Propositions \ref{prop:equivariantprop}, \ref{prop:equivariantprop2}, \ref{prop:inducedblockprop}, and \ref{prop:ht0} imply that the map $\Omega_Q$ from \prettyref{sec:maps} satisfies condition (ii) of \cite[Definition 7.2]{spathAMreduction}. (Again note that $Z(G)$ is trivial.)

3) Now, let $A:=\aut(G)$ and let $\chi\in\irr_0(G|Q)$.  Write $A_\chi:=\stab_A(\chi)$ and write $A_{Q,\chi}$ for the subgroup $N_{A_\chi}(Q)$ of elements of $A$ which stabilize both $Q$ and $\chi$.   Write $\chi':=\Omega_Q(\chi)$ and let $\widetilde{\chi}$ and $\widetilde{\chi'}$ be the extensions of $\chi$ to $A_{\chi}$ and $\chi'$ to $A_{Q,\chi}$ as in \prettyref{lem:outcyclic}, since $A/G$ is cyclic.  Say $\mathcal{P}$ and $\mathcal{P}'$, respectively, are the representations affording these extensions.  Then certainly, these representations satisfy the first three subconditions of condition (iii) of \cite[Definition 7.2]{spathAMreduction} and it suffices to show that they satisfy the final subcondition.  (Note that here $\mathrm{rep}\colon S\rightarrow G$ is simply the identity map.)

4)  Let $x$ be an $\ell$-regular element of $M_QA_{Q,\chi}=A_{Q,\chi}$ with $Q\in\mathrm{Syl}_\ell(C_G(x))$. If $x\in M_Q$, we are done by \cite[Proposition 7.4]{spathAMreduction}.  So, suppose that $x\not\in M_Q$.  That is, $x\in N_{A_\chi}(Q)\setminus N_G(Q)$.  Since $x$ is an $\ell'$-element, we see that in fact $x\in N_{A_{\ell'}}(Q)$, where $G\leq A_{\ell'}\leq A_{\chi}$ is as in \prettyref{lem:outcyclic}.  Let $\mathcal{K}$ be the conjugacy class of $x$ in $A_{\ell'}$.  Since $Q\in\mathrm{Syl}_\ell(C_G(x))$, we also have $Q\in\mathrm{Syl}_\ell(C_{A_{\ell'}}(x))$, since $[A_{\ell'}:G]$ is prime to $\ell$.  Hence $\mathcal{K}\cap C_{A_{\ell'}}(Q)$ is the class of $N_{A_{\ell'}}(Q)$, containing $x$ (see, for example, \cite[Lemma 4.16]{NavarroBlocks}).  Let $\tilde{B}$ and $\tilde{b}$ be the blocks of $A_{\ell'}$ and $N_{A_{\ell'}}(Q)$ containing $\widetilde{\chi}$ and $\widetilde{\chi'}$, respectively, so that $\tilde{b}^{A_{\ell'}}=\tilde{B}$.  Then we have $\lambda_{\tilde{B}}(\mathcal{K}^+)=\lambda_{\tilde{b}}\left((\mathcal{K}\cap C_{A_{\ell'}}(Q))^+\right)$, which implies that
\[\left(\frac{|A_{\ell'}|\widetilde{\chi}(x)}{|C_{A_{\ell'}}(x)|\chi(1)}\right)^\ast=
\left(\frac{|N_{A_{\ell'}}(Q)|\widetilde{\chi'}(x)}{|C_{N_{A_{\ell'}}(Q)}(x)|\chi'(1)}\right)^\ast.\] Moreover, except possibly in the case $G=Sp_4(q)$ and $Q=Q_1$ or $Q_2$, we can choose $\eta$ as in \prettyref{lem:outcyclic} to stabilize $Q$, by the discussion preceding \prettyref{prop:equivariantprop}, and therefore $[A_{\ell'}:G]=[N_{A_{\ell'}}(Q):N_G(Q)]$ and $[C_{A_{\ell'}}(x):C_G(x)]=[C_{N_{A_{\ell'}}(Q)}(x):C_{N_G(Q)}(x)]$.  However, note that if $Q=Q_1$ or $Q_2$ when $G=Sp_4(q)$, then $\gamma\not\in A_\chi$ (see the proof of \prettyref{prop:equivariantprop2}), but $\gamma^2=\sigma$ fixes $Q$, so the same is true in this case.  This yields
\[\left(\frac{|G|\widetilde{\chi}(x)}{|C_G(x)|\chi(1)}\right)^\ast=
\left(\frac{|N_{G}(Q)|\widetilde{\chi'}(x)}{|C_{N_{G}(Q)}(x)|\chi'(1)}\right)^\ast.\]
Hence by the last half of the proof of \cite[Proposition 7.4]{spathAMreduction}, the proof is complete.
%Now, since $\chi\in\irr_0(G|Q)$ and $\chi'\in\irr_0(N_G(Q)|Q)$, we see $[G:Q]_\ell=\chi(1)_\ell$ and $[N_G(Q):Q]_\ell=\chi'(1)_\ell$, so
%\[\left(\frac{|G|_{\ell'}\widetilde{\chi}(x)}{|C_G(x)|_{\ell'}\chi(1)_{\ell'}}\right)^\ast=
%\left(\frac{|N_{G}(Q)|_{\ell'}\widetilde{\chi'}(x)}{|C_{N_{G}(Q)}(x)|_{\ell'}\chi'(1)_{\ell'}}\right)^\ast,\]
%
%and $\left([G:N_G(Q)]_{\ell'}\widetilde{\chi}(x)\chi'(1)_{\ell'}\right)^\ast=\left([C_G(x):C_{N_G(Q)}(x)]_{\ell'}\widetilde{\chi'}(x)\chi(1)_{\ell'}\right)^\ast$. Now, note that $C_{N_G(Q)}(x)=C_G(x)\cap N_G(Q)=N_{C_G(x)}(Q)$, so by Sylow's theorems, $[C_G(x):C_{N_G(Q)}(x)]_{\ell'}=[C_G(x):C_{N_G(Q)}(x)]\equiv 1\mod\ell$, and hence
%$\epsilon_{\chi}^\ast\chi(1)_{\ell'}^\ast\widetilde{\chi}(x)^\ast=\left([G:N_G(Q)]_{\ell'}\widetilde{\chi}(x)\chi'(1)_{\ell'}\right)^\ast=\left(\widetilde{\chi'}(x)\chi(1)_{\ell'}\right)^\ast$, where $\epsilon_{\chi}$ is as in \cite[Definition 7.2]{spathAMreduction}, and finally $\epsilon_{\chi}^\ast\tr(\mathcal{P}(x))^\ast=\tr(\mathcal{P}'(x))^\ast$, as desired.
\end{proof}

\begin{theorem}\label{thm:goodAM2}
Let $S=Sp_6(2)$ or $Sp_4(2)'\cong A_6$.  Then $S$ is ``good" for the Alperin-McKay conjecture for all primes.
\end{theorem}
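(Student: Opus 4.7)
The plan is to verify the inductive Alperin-McKay condition from \cite[Definition 7.2]{spathAMreduction} directly for each pair $(S, \ell)$, exploiting that both $A_6$ and $Sp_6(2)$ are small enough that all relevant character-theoretic data is explicitly available via the GAP character table library \cite{GAPctlib} and the online ATLAS \cite{onlineATLAS}. This extends the GAP-based argument already used in the proof of \prettyref{thm:goodBAWC} for the case $(Sp_6(2), \ell = 3)$. The primes to be addressed are $\ell \in \{2,3,5\}$ for $A_6$ and $\ell \in \{2,3,5,7\}$ for $Sp_6(2)$.

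The key simplification for $Sp_6(2)$ is that $\mathrm{Out}(Sp_6(2))=1$, so the equivariance requirements and the extension/central-character condition (iii) of \cite[Definition 7.2]{spathAMreduction} become vacuous; one needs only to produce, for each nontrivial radical $\ell$-subgroup $Q$ of the Schur cover $G=2.Sp_6(2)$, a bijection $\Omega_Q \colon \irr_0(G|Q) \to \irr_0(N_G(Q)|Q)$ such that the block of each $\Omega_Q(\chi)$ induces the block of $\chi$. For $A_6$ the Schur cover is $6.A_6$ and $\mathrm{Out}(A_6)=C_2\times C_2$, so the full set of conditions must be checked, but both groups are still well within reach of direct computer verification.

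Concretely, I would, prime by prime, enumerate the $\ell$-radical subgroups of the relevant covering group using GAP (for $2.Sp_6(2)$ via the faithful $240$-point permutation representation in \cite{onlineATLAS}), compute the character tables of the normalizers together with their block distributions using the \texttt{PrimeBlocks} command, match height-zero characters of $G$ with those of $N_G(Q)$ by degree and block-defect to construct $\Omega_Q$, and verify the block-induction requirement using the central-character criterion $\lambda_B(\mathcal{K}^+) = \lambda_b((\mathcal{K} \cap C_G(Q))^+)$ from the proof of \prettyref{prop:inducedblockprop}. For $A_6$ the additional equivariance and character-extension conditions are decidable by finite computation on the explicit tables for $6.A_6$ and its automorphism group; alternatively, one may appeal to existing computational verifications of the inductive AM condition for small alternating and sporadic-type simple groups in the literature.

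The main obstacle is the case $\ell=2$ with $G=2.Sp_6(2)$: the Sylow $2$-subgroup has order $2^{10}$ and the radical-subgroup structure of \prettyref{sec:radsubs} does not apply, so the structural arguments of Sections \ref{sec:radsubs}--\ref{sec:maps} cannot be invoked and every step must be verified computationally. The nontrivial Schur multiplier of $Sp_6(2)$ also requires care, since irreducible characters of $2.Sp_6(2)$ split into those trivial on and those faithful on the central involution, and the bijection $\Omega_Q$ must respect this splitting; however, the triviality of $\mathrm{Out}(Sp_6(2))$ and the modest order $|2.Sp_6(2)|=2{,}903{,}040$ keep the total computation well within the capability of GAP.
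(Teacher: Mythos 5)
Your proposal takes the same fundamental approach as the paper: verify the inductive AM condition of \cite[Definition 7.2]{spathAMreduction} on the covering groups $2.Sp_6(2)$ and $6.A_6$ by explicit GAP computation, with the key structural observation that $\mathrm{Out}(Sp_6(2))=1$ reduces the $Sp_6(2)$ problem to verifying the ordinary Alperin-McKay conjecture for $2.Sp_6(2)$. You also correctly identify that $A_6$ requires the full force of the inductive condition because $\mathrm{Out}(A_6)\cong C_2\times C_2$.

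However, there is a real gap in how you handle the $A_6$ case, and your diagnosis of where the difficulty lies is misplaced. You flag $\ell=2$ with $2.Sp_6(2)$ as the main obstacle, but since $\mathrm{Out}(Sp_6(2))$ is trivial, condition (iii) of \cite[Definition 7.2]{spathAMreduction} is vacuous there and the computation is genuinely routine once a degree- and block-preserving bijection is exhibited. The actually delicate point, which your proposal folds into the blanket phrase ``decidable by finite computation on the explicit tables,'' is the verification of condition (iii) for $A_6$ at $\ell=5$. For $\ell\in\{2,3\}$ the paper disposes of condition (iii) immediately via \cite[Proposition 7.4]{spathAMreduction}, because $C_{\aut(S)}(PZ(G)/Z(G))$ is an $\ell$-group. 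At $\ell=5$ this centralizer has order $10$, and the criterion of Proposition 7.4 applies only to characters not fixed by the order-$2$ element $g$. The six height-zero characters of $6.A_6$ that \emph{are} fixed by $g$ (namely $\chi_1,\chi_4,\chi_5,\chi_6,\chi_{10},\chi_{11}$) have to be handled individually, and for the two faithful ones $\chi_{10},\chi_{11}$ the paper resorts to a non-obvious device: replacing the $Z(G)$-section $\mathrm{rep}$ with a twisted section $\mathrm{rep}'$ differing by the central involution on even-order elements, which transfers the verification done for $\chi_4,\chi_5$ to $\chi_{10},\chi_{11}$. Your proposal neither anticipates that Proposition 7.4 fails to cover these characters nor supplies any mechanism for constructing the required compatible extensions, so as written it does not constitute a proof of condition (iii) for $(A_6,5)$.
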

%We note that $Sp_6(2)$ and $Sp_4(2)'$ are ``good" for the McKay conjectures as well, according to \cite[Theorem 5]{cabanesSp4good} and \cite{}.
\begin{proof}%[Proof of \prettyref{thm:goodAM2}]
Let $G:=6.A_6$ be the universal covering group of $S:=A_6$ and $\ell$ a prime dividing $|A_6|$.  We can construct $G$ in GAP using the generators given in the online ATLAS \cite{onlineATLAS} for the faithful permutation representation of $G$ on 432 letters.  Using the PrimeBlocks function to calculate the sizes of the defect groups and calculating the Sylow subgroups of centralizers of $\ell'$-elements, we see that the only noncentral defect group of $G$ are Sylow $\ell$-subgroups. Fix $P\in \syl_\ell(G)$.  Again using the PrimeBlocks function, the knowledge of the action of the outer automorphism group of $A_6$ on the conjugacy classes of $A_6$, and the character information for $6.A_6, 6.A_6.2_1,$ and $6.A_6.2_2$ in the GAP Character Table Library \cite{GAPctlib}, we see that we can construct bijections satisfying conditions (i) and (ii) of the Inductive AM-condition \cite[Definition 7.2]{spathAMreduction}, with $M_P:=N_G(P)$.  Further, by \cite[Proposition 4.2]{spathAMreduction}, for $\chi\in\irr_0(G|P)$, there exist $\mathcal{P}, \mathcal{P}'$ satisfying the first three requirements of condition (iii), so it remains to show that they fulfill the final requirement, \cite[(7.4)]{spathAMreduction}.

Now, if $\ell=3$ or $2$, then calculating with the automorphism group in GAP yields that the centralizer $C_{\aut(S)}(PZ(G)/Z(G))$ is an $\ell$-group, so this final requirement is satisfied by \cite[Proposition 7.4]{spathAMreduction}.

If $\ell=5$, then $|C_{\aut(S)}(PZ(G)/Z(G))|=10$ and this centralizer is cyclic.  Let $g$ be the order-2 element in $C_{\aut(S)}(PZ(G)/Z(G))$.  Now, $\langle S, g\rangle$ has order 720, and comparing the character table with those of $A_6.2_1, A_6.2_2,$ and $A_6.2_3$, we see that %$g$ has the action of the outer automorphism $2_2$ of $S_6$, i.e.
$\langle S, g\rangle=A_6.2_2$.  Moreover, the height-zero characters (in the notation of the GAP Character Table Library) of $G=6.A_6$ which are fixed under $g$ are $\chi_1, \chi_4, \chi_5, \chi_6, \chi_{10},$ and $\chi_{11}$ of degrees $1, 8,8,9,8,$ and $8$, respectively, and hence all other characters satisfy the final condition again by \cite[Proposition 7.4]{spathAMreduction}.

Our constructed bijections map these characters to characters of $N_G(P)$ with degree $1, 2, 2, 1, 2,$ and $2$, respectively, and we see that for these characters, $\epsilon_\chi\equiv-1\mod 5$, except in the case $\chi= \chi_1=1_G$, in which case $\epsilon_{1_G}=1$.  (Here $\epsilon_\chi$ is as defined following \cite[(7.4)]{spathAMreduction}.)  Further, $\aut(S)_\chi=\aut(S)$ for $\chi=\chi_1$ or $\chi_6$ and $\aut(S)_\chi=\langle S, g\rangle$ for the four characters of degree $8$ under consideration.  Also, $\chi_1, \chi_4, \chi_5,$ and $\chi_6$ lie in the principal block of $G$, and can be viewed as characters of $S=G/Z(G)$.  Similarly, the characters of $N_G(P)$ that they map to lie in the principal block of $N_G(P)$ and can be viewed as characters of $N_G(P)/Z(G)$.   Considering the character tables for $\aut(S)_{\chi}$ and $\aut(S)_{P,\chi}$, we see that these characters lift to characters of $\aut(S)_{\chi}$ and $\left(N_G(P)/Z(G)\right)\aut(S)_{P,\chi}$ satisfying the final condition of \cite[Definition 7.2]{spathAMreduction}.

The remaining two characters of $G$ and $N_G(P)$ under consideration are trivial on the elements of $Z(G)$ of order 3 and are nontrivial on the element $z\in Z(G)$ of order $2$.   Moreover, the values of $\chi_4$ and $\chi_{10}$ are identical on $2'$-elements and satisfy $\chi_4(x)=-\chi_{10}(x)$ when $2$ divides $|x|$.  The same is true for $\chi_5$ compared with $\chi_{11}$, and similarly for the corresponding pairs of characters of $N_G(P)$.  Hence if $\mathrm{rep}\colon S\rightarrow G$ is the $Z(G)$-section used for condition (iii) of \cite[Definition 7.2]{spathAMreduction} for the character $\chi=\chi_4$, respectively $\chi_5$, then replacing $\mathrm{rep}$ with $\mathrm{rep'}\colon y\mapsto \left\{\begin{array}{cc}
                                                                   \mathrm{rep}(y) & \hbox{if $2\not|\,|y|$} \\
                                                                   \mathrm{rep}(y)\cdot z &  \hbox{if $2||y|$}
                                                                 \end{array}\right.$ yields that
 condition (iii) of \cite[Definition 7.2]{spathAMreduction} is satisfied when $\chi=\chi_{10}$, respectively $\chi_{11}$, using the same extensions as in the case $\chi=\chi_4,$ respectively $\chi_5$.

%%%%For 6.A_6:
%%Prime 5 GAP log in C:/gap4r4/bin/6A6prime5_12-5-12 on my lappy,
%%also a log in AZ math dept account under 6A6prime5stuff
%%the relevant parts of the character tables saved in 6A6prime5chartabs in AZ math dept account

%%for prime 3, log in AZ math dept account under 6A6Stuff
%%relevant parts of character tables saved in HeightZero6A6

%%for prime 2, GAP log in C:/gap4r4/bin/6A6prime2 on my lappy,

Now let $G:=2.Sp_6(2)$ be the universal covering group of $S:=Sp_6(2)$ and let $\ell$ be a prime dividing $|G|$.  Then $\aut(G)\cong\aut(S)\cong S$, and in this case, the inductive AM-condition \cite[Definition 7.2]{spathAMreduction} is satisfied as long as the usual Alperin-McKay conjecture is satisfied.    Considerations in GAP similar to the case $A_6$ above and the situation for the BAWC, we can construct the desired bijections.  (We note that when $\ell\neq 3$, the only noncentral defect group is a Sylow subgroup, and in the case $\ell=3$, we have two noncentral defect groups, namely the Sylow subgroup and a cyclic defect group of size $3$.)
%%Note that the factor sets satisfy the third part (7.3) of cond (iii) by Spath Lemma 2.2 (a), and 4th part (7.4) by Spath Prop 7.4.  Also, more detailed notes are saved in NotesOn2Sp62inGAP on my lappy.
\end{proof}

Theorems \ref{thm:goodBAWC}, \ref{thm:goodAM}, and \ref{thm:goodAM2} complete the proof of the main theorem,  \prettyref{thm:Sp6good}.

\begin{comment}
\clearpage
\appendix
\section{The Maps More Explicitly}\label{app:mapsAMBAWC}
\input{ExplicitGoodnessMaps.tex}
%
\end{comment}

\section{Acknowledgments}
This work was supported in part by a graduate fellowship from the National Physical Science Consortium and by the NSF (grant DMS-1201374) and was completed as part of my Ph.D. study at the University of Arizona.  I would like to thank my advisor, Professor P.H. Tiep, for all of his support, guidance, and helpful discussions.  I would also like to thank Professor K. Lux for numerous helpful discussions and for introducing me to computations with GAP, and Professor F. Himstedt of Technische Universit{\"a}t M{\"u}nchen for helping me get started with CHEVIE.  %I would also like to thank the referee for their careful reading of the manuscript and many helpful suggestions, which helped clarify the exposition.
%\begin{comment}
\appendix
\section{The Series $\mathcal{E}_i(J)$}\label{app:seriesinfo}
We include in \prettyref{tab:lusztigseriesinfo} a list of the characters (in the notation of CHEVIE \cite{chevie}) in each $\mathcal{E}_i(J)=\mathcal{E}(G,(g_i(J)))$ for $G=Sp_6(2^a)$ and the number of such series for each $i$.  This information can be found in the character table information for $Sp_6(2^a)$ in CHEVIE, \cite{chevie} (details for obtaining character table information can be found in chapter 3 of the CHEVIE manual).  For the convenience of the reader, we include in \prettyref{tab:chardegsG} the list of the character degrees for $G$.
\begin{table}\caption{Lusztig Series of $G=Sp_6(2^a)$}\label{tab:lusztigseriesinfo}
\begin{tabular}{|c|c|c|c|c|}
  \hline
  % after \\: \hline or \cline{col1-col2} \cline{col3-col4} ...
  \multirow{2}{*}{$\mathcal{E}_i(J)$} & \multirow{2}{*}{Indexing $J$} & $|J|=$ Number of  & \multirow{2}{*}{Characters in $\mathcal{E}_i(J)$} & \multirow{2}{*}{$|\mathcal{E}_i(J)|$}  \\
  &&Series of Type $i$&&\\
  \hline\hline
  $\mathcal{E}_1$ &  & 1 & $\chi_1$ through $\chi_{12}$ & 12\\
  \hline
  $\mathcal{E}_6(j)$ & $j\in I_{q-1}$ & $\frac{q-2}{2}$ & $\chi_{13}(j)$ through $\chi_{18}(j)$ & 6 \\
  \hline
  $\mathcal{E}_7(j)$ & $j\in I_{q+1}$ & $\frac{q}{2}$ & $\chi_{19}(j)$ through $\chi_{24}(j)$ & 6\\
  \hline
  $\mathcal{E}_8(j)$ & $j\in I_{q-1}$ & $\frac{q-2}{2}$ & $\chi_{25}(j)$ through $\chi_{27}(j)$ & 3\\
  \hline
  $\mathcal{E}_9(j)$ & $j\in I_{q+1}$ & $\frac{q}{2}$ & $\chi_{28}(j)$ through $\chi_{30}(j)$ & 3\\
  \hline
  $\mathcal{E}_{11}(j)$ & $j\in I_{q-1}$ & $\frac{q-2}{2}$ & $\chi_{31}(j)$ through $\chi_{34}(j)$ & 4\\
  \hline
  $\mathcal{E}_{13}(j)$ & $j\in I_{q+1}$ & $\frac{q}{2}$ & $\chi_{35}(j)$ through $\chi_{38}(j)$ & 4\\
  \hline
  $\mathcal{E}_{16}(j_1, j_2)$ & $(j_1, j_2) \in I^{2}_{q-1}$ & $\frac{(q-2)(q-4)}{4}$ & $\chi_{39}(j_1, j_2), \chi_{40}(j_1, j_2)$ & 2\\
  \hline
  $\mathcal{E}_{17}(j_1, j_2)$ & $(j_1, j_2) \in I^{2*}_{q-1}$ & $\frac{(q-2)(q-4)}{8}$ & $\chi_{41}(j_1, j_2), \chi_{42}(j_1, j_2)$ & 2\\
  \hline
  $\mathcal{E}_{18}(j_1, j_2)$ & $j_1\in I_{q-1}, j_2\in I_{q+1}$ & $\frac{q(q-2)}{4}$& $\chi_{43}(j_1, j_2), \chi_{44}(j_1, j_2)$ & 2 \\
  \hline
  $\mathcal{E}_{19}(j)$ & $j\in I_{q^2-1}$ & $\frac{q(q-2)}{4}$& $\chi_{45}(j), \chi_{46}(j)$ & 2 \\
  \hline
  $\mathcal{E}_{20}(j_1, j_2)$ & $j_1\in I_{q-1}, j_2\in I_{q+1}$ & $\frac{q(q-2)}{4}$& $\chi_{47}(j_1, j_2), \chi_{48}(j_1, j_2)$ & 2 \\
  \hline
  $\mathcal{E}_{21}(j_1, j_2)$ & $j_1\in I_{q+1}, j_2\in I_{q-1}$ & $\frac{q(q-2)}{4}$& $\chi_{49}(j_1, j_2), \chi_{50}(j_1, j_2)$ & 2 \\
  \hline
  $\mathcal{E}_{22}(j_1, j_2)$ & $(j_1, j_2) \in I^{2}_{q+1}$ & $\frac{q(q-2)}{4}$ & $\chi_{51}(j_1, j_2), \chi_{52}(j_1, j_2)$ & 2\\
  \hline
  $\mathcal{E}_{23}(j_1, j_2)$ & $(j_1, j_2) \in I^{2*}_{q+1}$ & $\frac{q(q-2)}{8}$ & $\chi_{53}(j_1, j_2), \chi_{54}(j_1, j_2)$ & 2\\
  \hline
  $\mathcal{E}_{24}(j)$ & $j\in I_{q^2+1}$ & $\frac{q^2}{4}$& $\chi_{55}(j), \chi_{56}(j)$ & 2 \\
  \hline
  $\mathcal{E}_{25}(j_1, j_2, j_3)$ & $(j_1, j_2, j_3) \in I^{3*}_{q-1}$ & $\frac{(q-2)(q-4)(q-6)}{48}$ & $\chi_{57}(j_1, j_2, j_3)$ & 1\\
  \hline
  $\mathcal{E}_{26}(j_1, j_2, j_3)$ & $(j_1, j_2) \in I^{2*}_{q-1}, j_3\in I_{q+1}$ & $\frac{q(q-2)(q-4)}{16}$ & $\chi_{58}(j_1, j_2, j_3)$ & 1\\
  \hline
  $\mathcal{E}_{27}(j_1, j_2)$ & $j_1 \in I_{q^2-1}, j_2\in I_{q-1}$ & $\frac{q(q-2)^2}{8}$ & $\chi_{59}(j_1, j_2)$ & 1\\
  \hline
  $\mathcal{E}_{28}(j_1, j_2, j_3)$ & $j_1 \in I_{q-1}, (j_2, j_3)\in I^{2*}_{q+1}$ & $\frac{q(q-2)^2}{16}$ & $\chi_{60}(j_1, j_2, j_3)$ & 1\\
  \hline
  $\mathcal{E}_{29}(j_1, j_2)$ & $j_1 \in I_{q^2-1}, j_2\in I_{q+1}$ & $\frac{q^2(q-2)}{8}$ & $\chi_{61}(j_1, j_2)$ & 1\\
  \hline
  $\mathcal{E}_{30}(j_1, j_2)$ & $j_1 \in I_{q-1}, j_2\in I_{q^2+1}$ & $\frac{q^2(q-2)}{8}$ & $\chi_{62}(j_1, j_2)$ & 1\\
  \hline
  $\mathcal{E}_{31}(j)$ & $j\in I_{q^3-1}$ & $\frac{q(q^2-1)}{6}$ & $\chi_{63}(j)$ & 1\\
  \hline
  $\mathcal{E}_{32}(j_1, j_2, j_3)$ & $(j_1, j_2, j_3) \in I^{3*}_{q+1}$ & $\frac{q(q-2)(q-4)}{48}$ & $\chi_{64}(j_1, j_2, j_3)$ & 1\\
  \hline
  $\mathcal{E}_{33}(j_1, j_2)$ & $j_1 \in I_{q^2+1}, j_2\in I_{q+1}$ & $\frac{q^3}{8}$ & $\chi_{65}(j_1, j_2)$ & 1\\
  \hline
   $\mathcal{E}_{34}(j)$ & $j\in I_{q^3+1}$ & $\frac{q(q^2-1)}{6}$ & $\chi_{66}(j)$ & 1\\
  \hline
\end{tabular}
\end{table}

\begin{table}\caption{Character Degrees for $G=Sp_6(2^a)$}\label{tab:chardegsG}
\scriptsize
%\begin{tabular}{|c|c|c|}
$\begin{array}{|c|c|} \hline Character &Degree\\ \hline
\chi_{1}&1\\
\hline
\chi_{2}&\frac{1}{2}(q^{5}+q^{4}+2q^{3}+q^{2}+q)\\
\hline
\chi_{3}&\frac{1}{2}(q^{5}-q^{4}+2q^{3}-q^{2}+q)\\
\hline
\chi_{4}&\frac{1}{2}(q^{5}+q^{4}+q^{2}+q)\\
\hline
\chi_{5}&\frac{1}{2}(q^{5}-q^{4}-q^{2}+q)\\
\hline
\chi_{6}&q^{6}+q^{4}+q^{2}\\
\hline
\chi_{7}&q^{7}+q^{5}+q^{3}\\
\hline
\chi_{8}&\frac{1}{2}(q^{8}+q^{7}+2q^{6}+q^{5}+q^{4})\\
\hline
\chi_{9}&\frac{1}{2}(q^{8}+q^{7}+q^{5}+q^{4})\\
\hline
\chi_{10}&\frac{1}{2}(q^{8}-q^{7}+2q^{6}-q^{5}+q^{4})\\
\hline
\chi_{11}&\frac{1}{2}(q^{8}-q^{7}-q^{5}+q^{4})\\
\hline
\chi_{12}&q^{9}\\
\hline
\chi_{13}(j)&q^{5}+q^{4}+q^{3}+q^{2}+q+1\\
\hline
\chi_{14}(j)&\frac{1}{2}(q^{8}+3q^{7}+4q^{6}+4q^{5}+4q^{4}+4q^{3}+3q^{2}+q)\\
\hline
\chi_{15}(j)&\frac{1}{2}(q^{8}+q^{7}+2q^{6}+2q^{5}+2q^{4}+2q^{3}+q^{2}+q)\\
\hline
\chi_{16}(j)&\frac{1}{2}(q^{8}+q^{7}+2q^{6}+2q^{5}+2q^{4}+2q^{3}+q^{2}+q)\\
\hline
\chi_{17}(j)&\frac{1}{2}(q^{8}-q^{7}-q^{2}+q)\\
\hline
\chi_{18}(j)&q^{9}+q^{8}+q^{7}+q^{6}+q^{5}+q^{4}\\
\hline
\chi_{19}(j)&q^{5}-q^{4}+q^{3}-q^{2}+q-1\\
\hline
\chi_{20}(j)&\frac{1}{2}(q^{8}+q^{7}-q^{2}-q)\\
\hline
\chi_{21}(j)&\frac{1}{2}(q^{8}-q^{7}+2q^{6}-2q^{5}+2q^{4}-2q^{3}+q^{2}-q)\\
\hline
\chi_{22}(j)&\frac{1}{2}(q^{8}-q^{7}+2q^{6}-2q^{5}+2q^{4}-2q^{3}+q^{2}-q)\\
\hline
\chi_{23}(j)&\frac{1}{2}(q^{8}-3q^{7}+4q^{6}-4q^{5}+4q^{4}-4q^{3}+3q^{2}-q)\\
\hline
\chi_{24}(j)&q^{9}-q^{8}+q^{7}-q^{6}+q^{5}-q^{4}\\
\hline
\chi_{25}(j)&q^{6}+q^{5}+q^{4}+2q^{3}+q^{2}+q+1\\
\hline
\chi_{26}(j)&q^{8}+2q^{7}+2q^{6}+3q^{5}+3q^{4}+2q^{3}+2q^{2}+q\\
\hline
\chi_{27}(j)&q^{9}+q^{8}+q^{7}+2q^{6}+q^{5}+q^{4}+q^{3}\\
\hline
\chi_{28}(j)&q^{6}-q^{5}+q^{4}-2q^{3}+q^{2}-q+1\\
\hline
\chi_{29}(j)&q^{8}-2q^{7}+2q^{6}-3q^{5}+3q^{4}-2q^{3}+2q^{2}-q\\
\hline
\end{array}$ \bigskip
$\begin{array}{|c|c|} \hline Character & Degree\\ \hline
\chi_{30}(j)&q^{9}-q^{8}+q^{7}-2q^{6}+q^{5}-q^{4}+q^{3}\\
\hline
\chi_{31}(j)&q^{7}+q^{6}+2q^{5}+2q^{4}+2q^{3}+2q^{2}+q+1\\
\hline
\chi_{32}(j)&q^{8}+q^{7}+2q^{6}+2q^{5}+2q^{4}+2q^{3}+q^{2}+q\\
\hline
\chi_{33}(j)&q^{8}+q^{7}+2q^{6}+2q^{5}+2q^{4}+2q^{3}+q^{2}+q\\
\hline
\chi_{34}(j)&q^{9}+q^{8}+2q^{7}+2q^{6}+2q^{5}+2q^{4}+q^{3}+q^{2}\\
\hline
\chi_{35}(j)&q^{7}-q^{6}+2q^{5}-2q^{4}+2q^{3}-2q^{2}+q-1\\
\hline
\chi_{36}(j)&q^{8}-q^{7}+2q^{6}-2q^{5}+2q^{4}-2q^{3}+q^{2}-q\\
\hline
\chi_{37}(j)&q^{8}-q^{7}+2q^{6}-2q^{5}+2q^{4}-2q^{3}+q^{2}-q\\
\hline
\chi_{38}(j)&q^{9}-q^{8}+2q^{7}-2q^{6}+2q^{5}-2q^{4}+q^{3}-q^{2}\\
\hline
\chi_{39}(j_1,j_2)&q^{8}+2q^{7}+3q^{6}+4q^{5}+4q^{4}+4q^{3}+3q^{2}+2q+1\\
\hline
\chi_{40}(j_1,j_2)&q^{9}+2q^{8}+3q^{7}+4q^{6}+4q^{5}+4q^{4}+3q^{3}+2q^{2}+q\\
\hline
\chi_{41}(j_1,j_2)&q^{8}+2q^{7}+3q^{6}+4q^{5}+4q^{4}+4q^{3}+3q^{2}+2q+1\\
\hline
\chi_{42}(j_1,j_2)&q^{9}+2q^{8}+3q^{7}+4q^{6}+4q^{5}+4q^{4}+3q^{3}+2q^{2}+q\\
\hline
\chi_{43}(j_1,j_2)&q^{8}+q^{6}-q^{2}-1\\
\hline
\chi_{44}(j_1,j_2)&q^{9}+q^{7}-q^{3}-q\\
\hline
\chi_{45}(j)&q^{8}+q^{6}-q^{2}-1\\
\hline
\chi_{46}(j)&q^{9}+q^{7}-q^{3}-q\\
\hline
\chi_{47}(j_1,j_2)&q^{8}+q^{6}-q^{2}-1\\
\hline
\chi_{48}(j_1,j_2)&q^{9}+q^{7}-q^{3}-q\\
\hline
\chi_{49}(j_1,j_2)&q^{8}+q^{6}-q^{2}-1\\
\hline
\chi_{50}(j_1,j_2)&q^{9}+q^{7}-q^{3}-q\\
\hline
\chi_{51}(j_1,j_2)&q^{8}-2q^{7}+3q^{6}-4q^{5}+4q^{4}-4q^{3}+3q^{2}-2q+1\\
\hline
\chi_{52}(j_1,j_2)&q^{9}-2q^{8}+3q^{7}-4q^{6}+4q^{5}-4q^{4}+3q^{3}-2q^{2}+q\\
\hline
\chi_{53}(j_1,j_2)&q^{8}-2q^{7}+3q^{6}-4q^{5}+4q^{4}-4q^{3}+3q^{2}-2q+1\\
\hline
\chi_{54}(j_1,j_2)&q^{9}-2q^{8}+3q^{7}-4q^{6}+4q^{5}-4q^{4}+3q^{3}-2q^{2}+q\\
\hline
\chi_{55}(j)&q^{8}-q^{6}-q^{2}+1\\
\hline
\chi_{56}(j)&q^{9}-q^{7}-q^{3}+q\\
\hline
\chi_{57}(j_1,j_2,j_3)&q^{9}+3q^{8}+5q^{7}+7q^{6}+8q^{5}+8q^{4}+7q^{3}+5q^{2}+3q+1\\
\hline
\chi_{58}(j_1,j_2,j_3)&q^{9}+q^{8}+q^{7}+q^{6}-q^{3}-q^{2}-q-1\\
\hline
\end{array}$ \bigskip
$\begin{array}{|c|c|} \hline Character&Degree\\ \hline
\chi_{59}(j_1,j_2)&q^{9}+q^{8}+q^{7}+q^{6}-q^{3}-q^{2}-q-1\\
\hline
\chi_{60}(j_1,j_2,j_3)&q^{9}-q^{8}+q^{7}-q^{6}-q^{3}+q^{2}-q+1\\
\hline
\chi_{61}(j_1,j_2)&q^{9}-q^{8}+q^{7}-q^{6}-q^{3}+q^{2}-q+1\\
\hline
\chi_{62}(j_1,j_2)&q^{9}+q^{8}-q^{7}-q^{6}-q^{3}-q^{2}+q+1\\
\hline
\chi_{63}(j)&q^{9}-q^{7}+q^{6}-q^{5}-q^{4}+q^{3}-q^{2}+1\\
\hline
\chi_{64}(j_1,j_2,j_3)&q^{9}-3q^{8}+5q^{7}-7q^{6}+8q^{5}-8q^{4}+7q^{3}-5q^{2}+3q-1\\
\hline
\chi_{65}(j_1,j_2)&q^{9}-q^{8}-q^{7}+q^{6}-q^{3}+q^{2}+q-1\\
\hline
\chi_{66}(j)&q^{9}-q^{7}-q^{6}-q^{5}+q^{4}+q^{3}+q^{2}-1\\
\hline
\end{array}$
\end{table} 
\section{Non-Unipotent Block Distributions for $Sp_6(2^a)$}\label{app:Block Dist}
We list here the cross-characteristic block distribution for irreducible ordinary characters of positive defect lying in non-unipotent blocks of $G=Sp_6(q)$ with $q$ even.  Any characters not listed have defect zero or lie in a unipotent block, whose distribution can be found in \cite{white2000}.  We use the notation of characters for $G$, in CHEVIE \cite{chevie}.  The notation $B_i(J)$, as before, means the block in $\mathcal{E}_\ell(G, (g_i(J)))$ of positive defect, when only one such block exists.  (Here $g_i(J)$ is a semisimple $\ell'$-element of $G^\ast$, as in the notation of \cite[Tabelle 19]{Luebeckthesis}.)  When multiple such blocks exist, we will write $B_i(J)^{(0)}$ for the block corresponding via Morita equivalence (see \prettyref{lem:moritaequiv}) to the principal block of $C_{G^\ast}(g_i(J))$ and $B_i(J)^{(1)}$ for the other block of positive defect.  (Indeed, in such cases there are only two blocks of positive defect in $\mathcal{E}_\ell(G, (g_i(J)))$.)

The block distributions listed in this section follow from the theory of central characters (the central characters of $G$ can be obtained from CHEVIE \cite{chevie}) together with the definition of $\mathcal{E}_\ell(G, (g_i(J)))$ and Brou{\'e}-Michel's result \cite{brouemichel} that this is a union of $\ell$-blocks..

\subsection{$\ell|(q^2+1)$}

Let $k\in I_{q-1}, t\in I_{q+1}, s\in I_{q^2+1}$ with $(q^2+1)_{\ell}|s$ and write $m:=(q^2+1)_{\ell'}$.

\[\irr\left(B_6(k)\right)=\{\chi_{13}(k), \chi_{14}(k), \chi_{17}(k), \chi_{18}(k), \chi_{62}(k,r)\colon m|r\}\]

\[\irr\left(B_7(t)\right)=\{\chi_{19}(t), \chi_{20}(t), \chi_{23}(t), \chi_{24}(t), \chi_{65}(r,t)\colon m|r\}\]

\[\irr\left(B_{24}(s)^{(0)}\right)=\{\chi_{55}(r)\colon r\equiv \pm s\hbox{ or } \pm qs\mod m\}\]

\[\irr\left(B_{24}(s)^{(1)}\right)=\{\chi_{56}(r)\colon r\equiv \pm s\hbox{ or } \pm qs\mod m\}\]

\[\irr\left(B_{30}(k,s)\right)=\{\chi_{62}(k, r)\colon r\equiv \pm s\hbox{ or } \pm qs\mod m\}\]

\[\irr\left(B_{33}(s,t)\right)=\{\chi_{65}(r, t)\colon r\equiv \pm s\hbox{ or } \pm qs\mod m\}\]

\subsection{$3\neq\ell|(q^2+q+1)$}
In the following, let $k\in I_{q-1}$ and $v\in I_{q^3-1}$ with $(q^3-1)_{\ell}|v$ and write $n:=(q^2+q+1)_{\ell'}$.

\[\irr\left(B_8(k)\right)=\{\chi_{25}(k), \chi_{26}(k), \chi_{27}(k), \chi_{63}(r)\colon r\equiv \pm k (q^2+q+1)\mod (q-1)n\}\]

\[\irr\left(B_{31}(v)\right)=\{\chi_{63}(r)\colon r\equiv\pm v, \pm qv, \hbox{ or }\pm q^2v\mod(q-1)n\}\]

\subsection{$3\neq\ell|(q^2-q+1)$}
In the following, let $t\in I_{q+1}$ and $w\in I_{q^3+1}$ with $(q^3+1)_{\ell}|w$ and write $n:=(q^2-q+1)_{\ell'}$.

\[\irr\left(B_9(t)\right)=\{\chi_{28}(t), \chi_{29}(t), \chi_{30}(t), \chi_{66}(r)\colon r\equiv \pm t (q^2-q+1)\mod (q+1)n\}\]

\[\irr\left(B_{34}(w)\right)=\{\chi_{66}(r)\colon r\equiv\pm w, \pm qw, \hbox{ or }\pm q^2w\mod(q+1)n\}\]

\subsection{$\ell|(q-1)$}
In the following, let $k_1, k_2, k_3\in I_{q-1}$ with $\ell^d|k_i$ and none of $k_1, k_2, k_3$ the same.  Let $t_1, t_2, t_3 \in I_{q+1}$ with none of $t_1, t_2, t_3$ the same, $u\in I_{q^2+1}$, and $s\in I_{q^2-1}$ with $\ell^d|s$, where $\ell^d:=(q-1)_{\ell}$.  Let $v\in I_{q^3-1}$ and $w\in I_{q^3+1}$ with $(q^3-1)_{\ell}|v$.  As usual, let $m:=(q-1)_{\ell'}$.  When $\ell=3$, let $n:=(q^2+q+1)_{3'}$.

\begin{align*}
\irr\left(B_6(k_1)^{(0)}\right)=\{\chi_{13}(r), \chi_{14}(r), \chi_{15}(r), \chi_{16}(r), \chi_{18}(r), \chi_{39}(j, r), \chi_{40}(j, r), &\chi_{41}(j, r), \chi_{42}(j, r), \chi_{57}(r, j, i) \colon\\& r\equiv\pm k_1\mod m, m|j, m|i\}
\end{align*}

\[\irr\left(B_6(k_1)^{(1)}\right)=\{\chi_{17}(r)\colon r\equiv\pm k_1\mod m\}\]

\begin{align*}
\irr\left(B_7(t_1)\right)=\{\chi_{19}(t_1), \chi_{20}(t_1), \chi_{21}(t_1), \chi_{22}(t_1), \chi_{24}(t_1), \chi_{43}(r, t_1), \chi_{44}(r, t_1), & \chi_{47}(r, t_1), \chi_{48}(r, t_1), \chi_{58}(r, j, t_1)\colon\\ & m|r, m|j\}
\end{align*}

\noindent(Note: $\E_\ell(G, g_{7}(t_1))$ also contains the defect-zero block $\{\chi_{23}(t_1)\}$.)

\[\irr\left(B_8(k_1)\right)=\left\{\begin{array}{ll}
\{\chi_{25}(r_1), \chi_{26}(r_1), \chi_{27}(r_1), \chi_{39}(r_1, r_2), \chi_{40}(r_1, r_2), \chi_{57}(r_1, r_2, r_3)\colon & \multirow{2}{*}{\hbox{if $\ell\neq 3$}}\\
 r_1,r_2,r_3\equiv \pm k_1\mod m\} & \\
\{\chi_{25}(r_1), \chi_{26}(r_1), \chi_{27}(r_1), \chi_{39}(r_1, r_2), \chi_{40}(r_1, r_2), \chi_{57}(r_1, r_2, r_3), \chi_{63}(r_4)\colon & \multirow{2}{*}{\hbox{if $\ell= 3$}}\\
r_1,r_2,r_3\equiv \pm k_1\mod m, r_4\equiv \pm k_1 (q^2+q+1)\mod mn\} & \\
\end{array}\right.\]

\[\irr\left(B_9(t_1)\right)=\{\chi_{28}(t_1), \chi_{30}(t_1), \chi_{61}(r, t_1)\colon r\equiv\pm(q-1)t_1\mod m(q+1)\}\]

 \noindent(Note: $\E_\ell(G, g_{9}(t_1))$ also contains the defect-zero block $\{\chi_{29}(t_1)\}$.)

\begin{align*}
\irr\left(B_{11}(k_1)\right)=\{&\chi_{31}(r_1), \chi_{32}(r_1), \chi_{33}(r_1), \chi_{34}(r_1), \chi_{41}(r_1,r_2), \chi_{42}(r_1, r_2), \\ & \chi_{39}(r_1, j), \chi_{40}(r_1, j), \chi_{57}(r_1, r_2, j) \colon r_1\equiv\pm k_1\mod m, r_2\equiv\pm k_1\mod m, m|j\}
\end{align*}

\begin{align*}
\irr\left(B_{13}(t_1)\right)=\{&\chi_{35}(t_1), \chi_{36}(t_1), \chi_{37}(t_1), \chi_{38}(t_1),  \chi_{49}(t_1, r), \chi_{50}(t_1, r), \\ & \chi_{45}(j), \chi_{46}(j), \chi_{59}(j, r) \colon j\equiv\pm(q-1)t_1\mod m(q+1), m|r\}
\end{align*}

\[\irr\left(B_{16}(k_1, k_2)\right)=\{\chi_{39}(r_1, r_2), \chi_{40}(r_1, r_2), \chi_{57}(r_1, j, r_2)\colon r_i\equiv\pm k_i\mod m, j\equiv\pm k_1\mod m\}\]

\[\irr\left(B_{17}(k_1, k_2)\right)=\{\chi_{41}(r_1, r_2), \chi_{42}(r_1, r_2), \chi_{57}(r_1, r_2, j)\colon r_i\equiv \pm k_i\mod m, m|j\}\]

\[\irr\left(B_{18}(k_1, t_1)\right)=\{\chi_{43}(r, t_1), \chi_{44}(r, t_1), \chi_{58}(r, j, t_1)\colon r\equiv\pm k_1\mod m, j\equiv\pm k_1\mod m\}\]

\[\irr\left(B_{19}(s)\right)=\{\chi_{45}(r), \chi_{46}(r), \chi_{59}(r, j)\colon r\equiv\pm s\hbox{ or }\pm qs \mod m(q+1), m|j\}\]

\[\irr\left(B_{20}(k_1, t_1)\right)=\{\chi_{47}(r, t_1), \chi_{48}(r, t_1), \chi_{58}(r, j, t_1)\colon r\equiv\pm k_1\mod m, m|j\}\]

\begin{align*}
\irr\left(B_{21}(t_1, k_1)\right)=\{\chi_{49}(t_1, r), \chi_{50}(t_1, r), \chi_{59}(j, r) \colon &
j\equiv\pm(q-1)t_1\mod m(q+1),\\& r\equiv\pm k_1\mod m\}
\end{align*}

\[\irr\left(B_{22}(t_1, t_2)\right)=\{\chi_{51}(t_1, t_2), \chi_{52}(t_1, t_2), \chi_{61}(r, t_2)\colon r\equiv\pm(q-1)t_1\mod m(q+1)\}\]

\[\irr\left(B_{23}(t_1, t_2)\right)=\{\chi_{53}(t_1, t_2), \chi_{54}(t_1, t_2), \chi_{60}(r, t_1, t_2)\colon m|r\}\]

\[\irr\left(B_{24}(u)\right)=\{\chi_{55}(u), \chi_{56}(u), \chi_{62}(r, u) \colon r\equiv 0\mod m\}\]

\[\irr\left(B_{25}(k_1, k_2, k_3)\right)=\{\chi_{57}(r_1, r_2, r_3)\colon r_i\equiv \pm k_i\mod m\}\]

\[\irr\left(B_{26}(k_1, k_2, t_1)\right)=\{\chi_{58}(r_1, r_2, t_1)\colon r_i\equiv\pm k_i\mod m\}\]

\[\irr\left(B_{27}(s, k_1)\right)=\{\chi_{59}(r, j)\colon r\equiv\pm s\hbox{ or }\pm qs \mod m(q+1), j\equiv \pm k_1\mod m\}\]

\[\irr\left(B_{28}(k_1, t_1, t_2)\right)=\{\chi_{60}(r, t_1, t_2)\colon r\equiv\pm k_1\mod m\}\]

\[\irr\left(B_{29}(s, t_1)\right)=\{\chi_{61}(r, t_1)\colon r\equiv\pm s\hbox{ or }\pm qs \mod m(q+1)\}\]

\[\irr\left(B_{30}(k_1, u)\right)=\{\chi_{62}(r, u)\colon r\equiv\pm k_1\mod m\}\]

\[\irr\left(B_{31}(v)\right)=\left\{\begin{array}{ll}
\{\chi_{63}(r)\colon r\equiv \pm v, \pm qv,\hbox{ or }\pm q^2v\mod m(q^2+q+1)\} & \hbox{if $\ell\neq 3$}\\
\{\chi_{63}(r)\colon r\equiv \pm v, \pm qv,\hbox{ or }\pm q^2v\mod mn\} & \hbox{if $\ell= 3$}\\
\end{array}\right.\]

\[\irr\left(B_{32}(t_1, t_2, t_3)\right)=\{\chi_{64}(t_1, t_2, t_3)\}\quad\hbox{(defect zero)}\]

\[\irr\left(B_{33}(u, t_1)\right)=\{\chi_{65}(u, t_1)\}\quad\hbox{(defect zero)}\]

\[\irr\left(B_{34}(w)\right)=\{\chi_{66}(w)\}\quad\hbox{(defect zero)}\]

\subsection{$\ell|(q+1)$}
In the following, let $k_1, k_2, k_3\in I_{q-1}$ with none of $k_1, k_2, k_3$ the same.  Let $t_1, t_2, t_3 \in I_{q+1}$ with $\ell^d|t_i$ and none of $t_1, t_2, t_3$ the same, $u\in I_{q^2+1}$, and $s\in I_{q^2-1}$ with $\ell^d|s$, where $\ell^d:=(q+1)_\ell$.  Let $v\in I_{q^3-1}$ and $w\in I_{q^3+1}$ with $(q^3+1)_\ell|w$.  As usual, let $m:=(q+1)_{\ell'}$.  When $\ell=3$, write $n:=(q^2-q+1)_{3'}$.

\begin{align*}
\irr\left(B_6(k_1)\right)=\{\chi_{13}(k_1), \chi_{15}(k_1), \chi_{16}(k_1), \chi_{17}(k_1), \chi_{18}(k_1),& \chi_{47}(k_1, r), \chi_{48}(k_1, r), \\ & \chi_{49}(r, k_1), \chi_{50}(r, k_1), \chi_{60}(k_1, r, j)\colon m|r, m|j\}
\end{align*}

\noindent(Note: $\E_\ell(G, g_{6}(k_1))$ also contains the defect-zero block $\{\chi_{14}(k_1)\}$.)

\begin{align*}
\irr\left(B_7(t_1)^{(0)}\right)=\{\chi_{19}(r), \chi_{21}(r), \chi_{22}(r), \chi_{23}(r), \chi_{24}(r), \chi_{51}(j, r), \chi_{52}(j, r), &\chi_{53}(j, r), \chi_{54}(j, r), \chi_{64}(r, j, i) \colon\\& r\equiv\pm t_1\mod m, m|j, m|i\}
\end{align*}

\[\irr\left(B_7(t_1)^{(1)}\right)=\{\chi_{20}(r)\colon r\equiv\pm t_1\mod m\}\]

\[\irr\left(B_8(k_1)\right)=\{\chi_{25}(k_1), \chi_{27}(k_1), \chi_{59}(r, k_1)\colon r\equiv\pm(q+1)k_1\mod m(q-1)\}\]

 \noindent(Note: $\E_\ell(G, g_{8}(k_1))$ also contains the defect-zero block $\{\chi_{26}(k_1)\}$.)

\[\irr\left(B_9(t_1)\right)=\left\{\begin{array}{ll}
\{\chi_{28}(r_1), \chi_{29}(r_1), \chi_{30}(r_1), \chi_{51}(r_1, r_2), \chi_{52}(r_1, r_2), \chi_{64}(r_1, r_2, r_3)\colon & \multirow{2}{*}{\hbox{ if $\ell\neq 3$,}}\\
r_1,r_2,r_3\equiv \pm t_1\mod m\}& \\
\{\chi_{28}(r_1), \chi_{29}(r_1), \chi_{30}(r_1), \chi_{51}(r_1, r_2), \chi_{52}(r_1, r_2), \chi_{64}(r_1, r_2, r_3), \chi_{66}(r_4)\colon & \multirow{2}{*}{\hbox{ if $\ell= 3$,}}\\
r_1,r_2,r_3\equiv \pm t_1\mod m, r_4\equiv \pm t_1 (q^2-q+1)\mod mn\}& \\
\end{array}\right.\]

\begin{align*}
\irr\left(B_{11}(k_1)\right)= \{&\chi_{31}(k_1), \chi_{32}(k_1), \chi_{33}(k_1), \chi_{34}(k_1),  \chi_{43}(k_1, r), \chi_{44}(k_1, r), \\ & \chi_{45}(j), \chi_{46}(j), \chi_{61}(j, r) \colon j\equiv\pm(q+1)k_1\mod m(q-1), m|r\}
\end{align*}

\begin{align*}
\irr\left(B_{13}(t_1)\right)=\{&\chi_{35}(r_1), \chi_{36}(r_1), \chi_{37}(r_1), \chi_{38}(r_1), \chi_{53}(r_1,r_2), \chi_{54}(r_1, r_2), \\ & \chi_{51}(r_1, j), \chi_{52}(r_1, j), \chi_{64}(r_1, r_2, j) \colon r_1\equiv\pm t_1\mod m, r_2\equiv\pm t_1\mod m, m|j\}
\end{align*}

\[\irr\left(B_{16}(k_1, k_2)\right)=\{\chi_{39}(k_1, k_2), \chi_{40}(k_1, k_2), \chi_{59}(r, k_2)\colon r\equiv\pm(q+1)k_1\mod m(q-1)\}\]

\[\irr\left(B_{17}(k_1, k_2)\right)=\{\chi_{41}(k_1, k_2), \chi_{42}(k_1, k_2), \chi_{58}(k_1, k_2, r)\colon m|r\}\]

\begin{align*}
\irr\left(B_{18}(k_1, t_1)\right)=\{\chi_{43}(k_1, r), \chi_{44}(k_1, r), \chi_{61}(j, r) \colon &
j\equiv\pm(q+1)k_1\mod m(q-1),\\& r\equiv\pm t_1\mod m\}
\end{align*}

\[\irr\left(B_{19}(s)\right)=\{\chi_{45}(r), \chi_{46}(r), \chi_{61}(r, j)\colon r\equiv\pm s\hbox{ or }\pm qs \mod m(q-1), m|j\}\]

\[\irr\left(B_{20}(k_1, t_1)\right)=\{\chi_{47}(k_1, r), \chi_{48}(k_1, r), \chi_{60}(k_1, r, j)\colon r\equiv\pm t_1\mod m, m|j\}\]

\[\irr\left(B_{21}(t_1, k_1)\right)=\{\chi_{49}(r, k_1), \chi_{50}(r, k_1), \chi_{60}(k_1, r, j)\colon r\equiv\pm t_1\mod m, j\equiv\pm t_1\mod m\}\]

\[\irr\left(B_{22}(t_1, t_2)\right)=\{\chi_{51}(r_1, r_2), \chi_{52}(r_1, r_2), \chi_{64}(r_1, j, r_2)\colon r_i\equiv\pm t_i\mod m, j\equiv\pm t_1\mod m\}\]

\[\irr\left(B_{23}(t_1, t_2)\right)=\{\chi_{53}(r_1, r_2), \chi_{54}(r_1, r_2), \chi_{64}(r_1, r_2, j)\colon r_i\equiv \pm t_i\mod m, m|j\}\]

\[\irr\left(B_{24}(u)\right)=\{\chi_{55}(u), \chi_{56}(u), \chi_{65}(u, r) \colon r\equiv 0\mod m\}\]

\[\irr\left(B_{25}(k_1, k_2, k_3)\right)=\{\chi_{57}(k_1, k_2, k_3)\}\quad\hbox{(defect zero)}\]

\[\irr\left(B_{26}(k_1, k_2, t_1)\right)=\{\chi_{58}(k_1, k_2, r)\colon r\equiv\pm t_1\mod m\}\]

\[\irr\left(B_{27}(s, k_1)\right)=\{\chi_{59}(r, k_1)\colon r\equiv\pm s\hbox{ or }\pm qs \mod m(q-1)\}\]

\[\irr\left(B_{28}(k_1, t_1, t_2)\right)=\{\chi_{60}(k_1, r_1, r_2)\colon r_i\equiv\pm t_i\mod m\}\]

\[\irr\left(B_{29}(s, t_1)\right)=\{\chi_{61}(r, j)\colon r\equiv\pm s\hbox{ or }\pm qs \mod m(q-1), j\equiv \pm t_1\mod m\}\]

\[\irr\left(B_{30}(k_1, u)\right)=\{\chi_{62}(k_1,u)\}\quad\hbox{(defect zero)}\]

\[\irr\left(B_{31}(v)\right)=\{\chi_{63}(v)\}\quad\hbox{(defect zero)}\]

\[\irr\left(B_{32}(t_1, t_2, t_3)\right)=\{\chi_{64}(r_1, r_2, r_3)\colon r_i\equiv \pm t_i\mod m\}\]

\[\irr\left(B_{33}(u, t_1)\right)=\{\chi_{65}(u, r)\colon r\equiv\pm t_1\mod m\}\]
\[\irr\left(B_{34}(w)\right)=\left\{\begin{array}{ll}
             \{\chi_{66}(r)\colon r\equiv \pm w, \pm qw,\hbox{ or }\pm q^2w\mod m(q^2-q+1)\} & \hbox{if $\ell\neq 3$,} \\
             \{\chi_{66}(r)\colon r\equiv \pm w, \pm qw,\hbox{ or }\pm q^2w\mod mn\} & \hbox{if $\ell=3$}
           \end{array}\right.\]

\section{Non-Unipotent Brauer Characters for $Sp_6(2^a)$}\label{app:BrauerChars}
\prettyref{tab:NonUnipBrauerSp6} gives the irreducible Brauer characters of $G=Sp_6(q)$, $q$ even, listed by the families $\mathcal{E}_\ell(G, (t))$ for $\ell'$-semisimple elements $t\in G^\ast$.  The indexing sets for $t=g_k$ are as given in \prettyref{app:Block Dist} for $B_k$. Characters listed in the same set for the same choice of $t$ make up the Brauer characters of a single block.  (For example, the two sets $\{\wh{\chi}_{13}(k_1), \wh{\chi}_{14}(k_1), \wh{\chi}_{15}(k_1), \wh{\chi}_{16}(k_1), \wh{\chi}_{18}(k_1)\}$ and $ \{\wh{\chi}_{17}(k_1)\}$ are listed as $\ibr_\ell(G)\cap \mathcal{E}_\ell(G,(g_6(k_1)))$ when $\ell|(q-1)$, corresponding to the two blocks $B_6(k_1)^{(0)}$ and $B_6(k_1)^{(1)}$.)
%As usual, $m$ and $d$ will denote the integers such that $(q-1)=\ell^d\cdot m$, $(q+1)=\ell^d\cdot m$, etc, where $(\ell, m)=1$.
Notation for the characters of $G$ is taken from CHEVIE \cite{chevie}, and the notation for the class representatives $t\in G^\ast$ is from \cite[Tabelle 19]{Luebeckthesis}.  As usual, $\wh{\chi}$ denotes the restriction of $\chi\in\irr(G)$ to $\ell$-regular elements of $G$.

The results in the table follow from \prettyref{lem:moritaequiv}, \prettyref{lem:centralizersemisimpleSp}, and the decomposition numbers for the unipotent blocks for the low-rank groups.  The decomposition matrices for the unipotent blocks of $SL_2(q)$ (and therefore $Sp_2(q)=SL_2(q), GL_2(q)=C_{q-1}\times SL_2(q),$ and $GU_2(q)=C_{q+1}\times SL_2(q)$) and $GL_3(q)$ can be obtained from \cite{james1990}, and those for $Sp_4(q)$ are found in \cite{white95}.  Note that the number $\alpha$ found in the description of the Brauer characters of $\mathcal{E}_\ell(G,(t))$ for $t$ in the family $g_6$ or $g_7$ when $\ell|(q+1)$ is as in \cite{white95}, and by \cite{okuyamawakiSp4}, we have $\alpha=1$ when $\ell^d=3$ and $\alpha=2$ otherwise.  The decomposition matrices for the unipotent blocks of $GU_3(q)$ were found in \cite{geckdecompSU}, up to an unknown in the case $\ell|(q+1)$, which is found in \cite{okuyamawakiSU3}.

\begin{table}\caption{$\ell-$Brauer Characters in Non-Unipotent Blocks of $G=Sp_6(2^a)$, $\ell\neq 2$}\label{tab:NonUnipBrauerSp6}
\tiny
\begin{tabular}{|c|c|c|}
  \hline
  % after \\: \hline or \cline{col1-col2} \cline{col3-col4} ...
  $t$ &  Condition & \multirow{2}{*}{$\ibr_\ell(G)\cap \mathcal{E}_\ell(G,(t))$}  \\
  $C_{G^\ast}(t)$ & on $\ell$ &\\
  \hline
  \multirow{3}{*}{$g_6(k_1)$} & $\ell|(q-1)$  & $\{\wh{\chi}_{13}(k_1), \wh{\chi}_{14}(k_1), \wh{\chi}_{15}(k_1), \wh{\chi}_{16}(k_1), \wh{\chi}_{18}(k_1)\},\quad \{\wh{\chi}_{17}(k_1)\}$\\
  & \multirow{4}{*}{$\ell|(q+1)$}   & \\
  &&$\{\wh{\chi}_{13}(k_1), \wh{\chi}_{15}(k_1)-\wh{\chi}_{13}(k_1), \wh{\chi}_{16}(k_1)-\wh{\chi}_{13}(k_1), \wh{\chi}_{17}(k_1)$, \\  &&$\wh{\chi}_{18}(k_1)-\alpha\wh{\chi}_{17}(k_1)-\wh{\chi}_{16}(k_1)-\wh{\chi}_{15}(k_1)+\wh{\chi}_{13}(k_1)\},\quad \{\wh{\chi}_{14}(k_1)\}$\\
  \multirow{3}{*}{$C_{q-1}\times Sp_4(q)$}&&\\
   & $\ell|(q^2+1)$  & $\{\wh{\chi}_{13}(k_1), \wh{\chi}_{14}(k_1)-\wh{\chi}_{13}(k_1), \wh{\chi}_{18}(k_1)-\wh{\chi}_{14}(k_1)+\wh{\chi}_{13}(k_1), \wh{\chi}_{17}(k_1)\}, \quad\{\wh{\chi}_{15}(k_1)\}, \{\wh{\chi}_{16}(k_1)\}$\\
   & $\ell\not|(q^4-1)$  & $\{\wh{\chi}_{13}(k_1)\}, \{\wh{\chi}_{14}(k_1)\}, \{ \wh{\chi}_{15}(k_1)\}, \{ \wh{\chi}_{16}(k_1)\}, \{\wh{\chi}_{17}(k_1)\},\{\wh{\chi}_{18}(k_1)\}$\\
  \hline
  \multirow{3}{*}{$g_7(t_1)$} & $\ell|(q-1)$ & $\{\wh{\chi}_{19}(t_1), \wh{\chi}_{20}(t_1), \wh{\chi}_{21}(t_1), \wh{\chi}_{22}(t_1), \wh{\chi}_{24}(t_1)\},\quad \{\wh{\chi}_{23}(t_1)\}$\\
   & \multirow{4}{*}{$\ell|(q+1)$}  &\\
    & & $\{\wh{\chi}_{19}(t_1), \wh{\chi}_{21}(t_1)-\wh{\chi}_{19}(t_1), \wh{\chi}_{22}(t_1)-\wh{\chi}_{19}(t_1), \wh{\chi}_{23}(t_1),$ \\
    & & $\wh{\chi}_{24}(t_1)-\alpha\wh{\chi}_{23}(t_1)-\wh{\chi}_{22}(t_1)-\wh{\chi}_{21}(t_1)+\wh{\chi}_{19}(t_1)\}, \quad \{\wh{\chi}_{20}(t_1)\}$\\
   \multirow{3}{*}{$C_{q+1}\times Sp_4(q)$} &&\\
   & $\ell|(q^2+1)$ &  $\{\wh{\chi}_{19}(t_1), \wh{\chi}_{20}(t_1)-\wh{\chi}_{19}(t_1), \wh{\chi}_{24}(t_1)-\wh{\chi}_{20}(t_1)+\wh{\chi}_{19}(t_1), \wh{\chi}_{23}(t_1)\}, \quad\{\wh{\chi}_{21}(t_1)\}, \{\wh{\chi}_{22}(t_1)\}$\\
   & $\ell\not|(q^4-1)$ &  $\{\wh{\chi}_{19}(t_1)\}, \{\wh{\chi}_{20}(t_1)\}, \{ \wh{\chi}_{21}(t_1)\}, \{ \wh{\chi}_{22}(t_1)\}, \{\wh{\chi}_{23}(t_1)\},\{\wh{\chi}_{24}(t_1)\}$\\
  \hline
  \multirow{2}{*}{$g_8(k_1)$} & $3\neq\ell|(q-1)$ &  $\{\wh{\chi}_{25}(k_1), \wh{\chi}_{26}(k_1), \wh{\chi}_{27}(k_1)\}$ \\
   & $\ell|(q+1)$ &   $\{\wh{\chi}_{25}(k_1), \wh{\chi}_{27}(k_1)-\wh{\chi}_{25}(k_1)\},\quad\{\wh{\chi}_{26}(k_1)\}$ \\
  \multirow{2}{*}{$GL_3(q)$}  & $\ell|(q^2+q+1)$  & $\{\wh{\chi}_{25}(k_1), \wh{\chi}_{26}(k_1)-\wh{\chi}_{25}(k_1), \wh{\chi}_{27}(k_1)-\wh{\chi}_{26}(k_1)+\wh{\chi}_{25}(k_1)\}$ \\
    & $\ell\not|(q^3-1)(q+1)$ &  $\{\wh{\chi}_{25}(k_1)\}, \{\wh{\chi}_{26}(k_1)\}, \{\wh{\chi}_{27}(k_1)\}$ \\
    \hline
  \multirow{2}{*}{$g_9(t_1)$} & $\ell|(q-1)$ & $\{\wh{\chi}_{28}(t_1), \wh{\chi}_{30}(t_1)\}, \{ \wh{\chi}_{29}(t_1)\}$\\
  & $\ell|(q+1)$ &  $\{\wh{\chi}_{28}(t_1), \wh{\chi}_{29}(t_1), \wh{\chi}_{30}(t_1)-2\wh{\chi}_{29}(t_1)-\wh{\chi}_{28}(t_1)\}$\\
  \multirow{2}{*}{$GU_3(q)$} & $3\neq\ell|(q^2-q+1)$ &  $\{\wh{\chi}_{28}(t_1), \wh{\chi}_{29}(t_1), \wh{\chi}_{30}(t_1)-\wh{\chi}_{28}(t_1)\}$ \\
   & $\ell\not|(q^3+1)(q-1)$ & $\{\wh{\chi}_{28}(t_1)\}, \{\wh{\chi}_{29}(t_1)\}, \{\wh{\chi}_{30}(t_1)\}$\\
  \hline
  \multirow{2}{*}{$g_{11}(k_1)$} & $\ell|(q-1)$ &   $\{\wh{\chi}_{31}(k_1),\wh{\chi}_{32}(k_1),\wh{\chi}_{33}(k_1),\wh{\chi}_{34}(k_1)\}$ \\
   & $\ell|(q+1)$ &    $\{\wh{\chi}_{31}(k_1), \wh{\chi}_{32}(k_1)-\wh{\chi}_{31}(k_1), \wh{\chi}_{33}(k_1)- \wh{\chi}_{31}(k_1), \wh{\chi}_{34}(k_1)-\wh{\chi}_{33}(k_1)-\wh{\chi}_{32}(k_1)+\wh{\chi}_{31}(k_1)\}$ \\
   $GL_2(q)\times Sp_2(q)$ & $\ell\not|(q^2-1)$ &  $\{\wh{\chi}_{31}(k_1)\},\{\wh{\chi}_{32}(k_1)\},\{\wh{\chi}_{33}(k_1)\},\{\wh{\chi}_{34}(k_1)\}$ \\
   \hline
  \multirow{2}{*}{$g_{13}(t_1)$} & $\ell|(q-1)$ &    $\{\wh{\chi}_{35}(t_1), \wh{\chi}_{36}(t_1), \wh{\chi}_{37}(t_1), \wh{\chi}_{38}(t_1)\}$\\
   & $\ell|(q+1)$ &    $\{\wh{\chi}_{35}(t_1), \wh{\chi}_{36}(t_1)-\wh{\chi}_{35}(t_1), \wh{\chi}_{37}(t_1)- \wh{\chi}_{35}(t_1), \wh{\chi}_{38}(t_1)-\wh{\chi}_{37}(t_1)-\wh{\chi}_{36}(t_1)+\wh{\chi}_{35}(t_1)\}$ \\
   $GU_2(q)\times Sp_2(q)$ & $\ell\not|(q^2-1)$ &    $\{\wh{\chi}_{35}(t_1)\}, \{\wh{\chi}_{36}(t_1)\}, \{\wh{\chi}_{37}(t_1)\}, \{\wh{\chi}_{38}(t_1)\}$\\
   \hline
  \multirow{2}{*}{$g_{16}(k_1, k_2)$} & $\ell|(q-1)$ &   $\{\wh{\chi}_{39}(k_1, k_2), \wh{\chi}_{40}(k_1, k_2)\}$\\
   & $\ell|(q+1)$ &    $\{\wh{\chi}_{39}(k_1, k_2), \wh{\chi}_{40}(k_1, k_2)-\wh{\chi}_{39}(k_1, k_2)\}$\\
   $C_{q-1}\times GL_2(q)$ & $\ell\not|(q^2-1)$ &    $\{\wh{\chi}_{39}(k_1, k_2)\}, \{\wh{\chi}_{40}(k_1, k_2)\}$\\
   \hline
   \multirow{2}{*}{$g_{17}(k_1, k_2)$} & $\ell|(q-1)$ &    $\{\wh{\chi}_{41}(k_1, k_2), \wh{\chi}_{42}(k_1, k_2)\}$\\
   & $\ell|(q+1)$ &    $\{\wh{\chi}_{41}(k_1, k_2), \wh{\chi}_{42}(k_1, k_2)-\wh{\chi}_{41}(k_1, k_2)\}$\\
  $\left(C_{q-1}\right)^2\times Sp_2(q)$ & $\ell\not|(q^2-1)$ &   $\{\wh{\chi}_{41}(k_1, k_2)\}, \{\wh{\chi}_{42}(k_1, k_2)\}$\\
   \hline
   \multirow{2}{*}{$g_{18}(k_1, t_1)$} & $\ell|(q-1)$ &   $\{\wh{\chi}_{43}(k_1, t_1), \wh{\chi}_{44}(k_1, t_1)\}$\\
   & $\ell|(q+1)$ &    $\{\wh{\chi}_{43}(k_1, t_1), \wh{\chi}_{44}(k_1, t_1)-\wh{\chi}_{43}(k_1, t_1)\}$\\
   $C_{q+1}\times GL_2(q)$& $\ell\not|(q^2-1)$ &    $\{\wh{\chi}_{43}(k_1, t_1)\}, \{\wh{\chi}_{44}(k_1, t_1)\}$\\
   \hline
   \multirow{2}{*}{$g_{19}(s)$} & $\ell|(q-1)$ &    $\{\wh{\chi}_{45}(s), \wh{\chi}_{46}(s)\}$\\
   & $\ell|(q+1)$ &    $\{\wh{\chi}_{45}(s), \wh{\chi}_{46}(s)-\wh{\chi}_{45}(s)\}$\\
   $C_{q^2-1}\times Sp_2(q)$& $\ell\not|(q^2-1)$ &    $\{\wh{\chi}_{45}(s)\}, \{\wh{\chi}_{46}(s)\}$\\
  \hline
   \multirow{2}{*}{$g_{20}(k_1, t_1)$} & $\ell|(q-1)$ &   $\{\wh{\chi}_{47}(k_1, t_1), \wh{\chi}_{48}(k_1, t_1)\}$\\
   & $\ell|(q+1)$ &   $\{\wh{\chi}_{47}(k_1, t_1), \wh{\chi}_{48}(k_1, t_1)-\wh{\chi}_{47}(k_1, t_1)\}$\\
  $C_{q-1}\times C_{q+1}\times Sp_2(q)$ & $\ell\not|(q^2-1)$ &   $\{\wh{\chi}_{47}(k_1, t_1)\}, \{\wh{\chi}_{48}(k_1, t_1)\}$\\
   \hline
   \multirow{2}{*}{$g_{21}(t_1, k_1)$} & $\ell|(q-1)$ &    $\{\wh{\chi}_{49}(t_1, k_1), \wh{\chi}_{50}(t_1, k_1)\}$\\
   & $\ell|(q+1)$ &    $\{\wh{\chi}_{49}(t_1, k_1), \wh{\chi}_{50}(t_1, k_1)-\wh{\chi}_{49}(t_1, k_1)\}$\\
   $C_{q-1}\times GU_2(q)$& $\ell\not|(q^2-1)$  &  $\{\wh{\chi}_{49}(t_1, k_1)\}, \{\wh{\chi}_{50}(t_1, k_1)\}$\\
  \hline
   \multirow{2}{*}{$g_{22}(t_1, t_2)$} & $\ell|(q-1)$  &  $\{\wh{\chi}_{51}(t_1, t_2), \wh{\chi}_{52}(t_1, t_2)\}$\\
   & $\ell|(q+1)$ &   $\{\wh{\chi}_{51}(t_1, t_2), \wh{\chi}_{52}(t_1, t_2)-\wh{\chi}_{51}(t_1, t_2)\}$\\
   $C_{q+1}\times GU_2(q)$& $\ell\not|(q^2-1)$ &   $\{\wh{\chi}_{51}(t_1, t_2)\}, \{ \wh{\chi}_{52}(t_1, t_2)\}$\\
     \hline
   \multirow{2}{*}{$g_{23}(t_1, t_2)$} & $\ell|(q-1)$ &    $\{\wh{\chi}_{53}(t_1, t_2), \wh{\chi}_{54}(t_1, t_2)\}$\\
   & $\ell|(q+1)$ &    $\{\wh{\chi}_{53}(t_1, t_2), \wh{\chi}_{54}(t_1, t_2)-\wh{\chi}_{53}(t_1, t_2)\}$\\
  $\left(C_{q+1}\right)^2\times Sp_2(q)$ & $\ell\not|(q^2-1)$ &    $\{\wh{\chi}_{53}(t_1, t_2)\}, \{ \wh{\chi}_{54}(t_1, t_2)\}$\\
   \hline
   \multirow{2}{*}{$g_{24}(u)$} & $\ell|(q-1)$ &   $\{\wh{\chi}_{55}(u), \wh{\chi}_{56}(u)\}$\\
   & $\ell|(q+1)$ &    $\{\wh{\chi}_{55}(u), \wh{\chi}_{56}(u)-\wh{\chi}_{55}(u)\}$\\
  $C_{q^2+1}\times Sp_2(q)$ & $\ell\not|(q^2-1)$ &    $\{\wh{\chi}_{55}(u)\}, \{ \wh{\chi}_{56}(u)\}$\\
   \hline
  $g_{25}(k_1,k_2,k_3)$& \multirow{2}{*}{all $\ell\neq 2$} &  \multirow{2}{*}{$\{\wh{\chi}_{57}(k_1,k_2,k_3)\}$}\\
   $\left(C_{q-1}\right)^3$ & & \\
   \hline
   $g_{26}(k_1,k_2,t_1)$ & \multirow{2}{*}{all $\ell\neq 2$} &  \multirow{2}{*}{$\{\wh{\chi}_{58}(k_1,k_2,t_1)\}$}\\
   $\left(C_{q-1}\right)^2\times C_{q+1}$ & & \\
   \hline
   $g_{27}(s, k_1)$ & \multirow{2}{*}{all $\ell\neq 2$} & \multirow{2}{*}{$\{\wh{\chi}_{59}(s, k_1)\}$}\\
   $C_{q-1}\times C_{q^2-1}$& &\\
   \hline
   $g_{28}(k_1, t_1, t_2)$ & \multirow{2}{*}{all $\ell\neq 2$}  & \multirow{2}{*}{$\{\wh{\chi}_{60}(k_1, t_1, t_2)\}$}\\
   $C_{q-1}\times\left(C_{q+1}\right)^2$& &\\
   \hline
   $g_{29}(s, t_1)$ & \multirow{2}{*}{all $\ell\neq 2$}  & \multirow{2}{*}{$\{\wh{\chi}_{61}(s, t_1)\}$} \\
   $C_{q+1}\times C_{q^2-1}$& &\\
   \hline
   $g_{30}(k_1, u)$ & \multirow{2}{*}{all $\ell\neq 2$} &  \multirow{2}{*}{$\{\wh{\chi}_{62}(k_1, u)\}$}\\
   $C_{q-1}\times C_{q^2+1}$& &\\
   \hline
   $g_{31}(v)$ & \multirow{2}{*}{all $\ell\neq 2$} &  \multirow{2}{*}{$\{\wh{\chi}_{63}(v)\}$}\\
   $C_{q^3-1}$& &\\
   \hline
   $g_{32}(t_1, t_2, t_3)$ & \multirow{2}{*}{all $\ell\neq 2$} &  \multirow{2}{*}{$\{\wh{\chi}_{64}(t_1, t_2, t_3)\}$}\\
   $\left(C_{q+1}\right)^3$& &\\
   \hline
    $g_{33}(u, t_1)$ & \multirow{2}{*}{all $\ell\neq 2$} &  \multirow{2}{*}{$\{\wh{\chi}_{65}(u, t_1)\}$}\\
   $C_{q+1}\times C_{q^2+1}$& &\\
   \hline
   $g_{34}(w)$ & \multirow{2}{*}{all $\ell\neq 2$} &  \multirow{2}{*}{$\{\wh{\chi}_{66}(w)\}$}\\
   $C_{q^3+1}$& &\\
   \hline
\end{tabular}
\end{table} 
\clearpage
%\end{comment}
\bibliographystyle{plain}
\bibliography{researchreferences}
\end{document}